\documentclass[a4paper]{amsart}
\usepackage[leqno]{amsmath}
\usepackage{amssymb}
\usepackage{amscd}
\usepackage{amsthm}
\usepackage{mathrsfs}
\usepackage{mathtools}
\usepackage{stmaryrd}
\SetSymbolFont{stmry}{bold}{U}{stmry}{m}{n}
\usepackage{dsfont}
\usepackage{bbm}
\usepackage{bm}
\usepackage{xcolor}
\usepackage{accents}
\usepackage{hyperref}
\usepackage{enumerate}
\usepackage{cite}
\usepackage[utf8]{inputenc}
\usepackage[all,cmtip]{xy}
\usepackage{etoolbox}
\usepackage{tikz-cd}
\usetikzlibrary{arrows}
\usepackage{extarrows}

\numberwithin{equation}{section}

\newcommand{\Z}{\ensuremath{\mathbb{Z}}}
\newcommand{\Q}{\ensuremath{\mathbb{Q}}}
\newcommand{\R}{\ensuremath{\mathbb{R}}}
\newcommand{\C}{\ensuremath{\mathbb{C}}}

\DeclareMathOperator{\HH}{\mathrm{H}}
\DeclareMathOperator{\Hom}{Hom}
\DeclareMathOperator{\Span}{span}

\DeclareMathOperator{\Gal}{Gal}

\DeclareMathOperator{\Ker}{ker}

\DeclareMathOperator{\im}{im}

\DeclareMathOperator{\Tr}{Tr}
\DeclareMathOperator{\red}{red}
\DeclareMathOperator{\supp}{supp}
\DeclareMathOperator{\ord}{ord}
\DeclareMathOperator{\res}{res}
\DeclareMathOperator{\adj}{adj}

\DeclareMathOperator{\SL}{SL}
\DeclareMathOperator{\PSL}{PSL}
\DeclareMathOperator{\GL}{GL}
\DeclareMathOperator{\PGL}{PGL}
\DeclareMathOperator{\SO}{SO}
\DeclareMathOperator{\Ort}{O}
\DeclareMathOperator{\Spin}{Spin}
\DeclareMathOperator{\Mat}{M}
\newcommand{\PP}{\ensuremath{\mathbb{P}}}

\newcommand{\m}{\ensuremath{\mathfrak{m}}}

\newcommand{\cH}{\mathcal{H}}
\newcommand{\cT}{\mathcal{T}}
\newcommand{\cV}{\mathcal{V}}
\newcommand{\cE}{\mathcal{E}}
\newcommand{\cU}{\mathcal{U}}
\newcommand{\cF}{\mathcal{F}}
\newcommand{\cO}{\mathcal{O}}
\newcommand{\cL}{\mathcal{L}}

\newcommand{\cN}{\mathcal{N}}
\newcommand{\cP}{\mathcal{P}}
\newcommand{\cA}{\mathcal{A}}
\newcommand{\cM}{\mathcal{M}}
\newcommand{\cC}{\mathcal{C}}
\newcommand{\sD}{{\mathscr D}}

\newcommand{\cR}{\mathcal{R}}

\newcommand{\spe}{\mathrm{sp}}
\newcommand{\ratq}{\mathrm{rq}}

\newcommand{\orient}{\mathrm{or}}
\newcommand{\simplex}{\boldsymbol{\Delta}}
\DeclareMathOperator{\Div}{Div}
\DeclareMathOperator{\Coind}{Coind}
\DeclareMathOperator{\Ind}{Ind}
\DeclareMathOperator{\iso}{iso}

\renewcommand{\div}{\operatorname{div}}
\newcommand{\Orb}{\mathscr{O}}
\newcommand{\Gammao}{\Gamma_{\circ}}
\newcommand{\GroupD}{\mathcal{G}_{\Delta}}
\newcommand{\Group}{\mathcal{G}}

\newcommand{\into}{\hookrightarrow}
\newcommand{\onto}{\twoheadrightarrow}
\newcommand{\too}{\longrightarrow}								
\newcommand{\mapstoo}{\longmapsto}
\newcommand{\intoo}{\lhook\joinrel\longrightarrow}

\newtheorem{Lem}{Lemma}[section]
\makeatletter
\newlength{\@thlabel@width}%
\newcommand{\thmenumhspace}{\settowidth{\@thlabel@width}{\itshape1.}\sbox{\@labels}{\unhbox\@labels\hspace{\dimexpr-\leftmargin+\labelsep+\@thlabel@width-\itemindent}}}
\makeatother
\newtheorem{Pro}[Lem]{Proposition}
\newtheorem{Thm}[Lem]{Theorem}

\newtheorem{Cor}[Lem]{Corollary}
\theoremstyle{definition}

\newtheorem{Rem}[Lem]{Remark}

\newtheorem*{Que}{Question}

\author[L.~Gehrmann]{Lennart Gehrmann}
\address{L.~Gehrmann, Universität Bielefeld, Germany}
\email{gehrmann.math@gmail.com}
\author[S.~Sprehe]{Sören Sprehe}
\address{S.~Sprehe, Universität Bielefeld, Germany}
\email{ssprehe@math.uni-bielefeld.de}

\title[On the abundance of rigid meromorphic cocycles]{On the abundance of rigid meromorphic cocycles for quadratic forms in four variables}

\begin{document}

\begin{abstract}
Let $(V,q)$ be an anisotropic quadratic space of dimension $4$ over the rationals and let $p$ be a prime such that the local quadratic space $V\otimes \Q_p$ is the orthogonal direct sum of two hyperbolic planes.
We show that the group of $p$-adic rigid meromorphic cocycles attached to $V$ has infinite rank.
A key ingredient in the proof is the computation of the group of invertible rigid analytic functions on products of certain rigid analytic subspaces of the projective line such as the $p$-adic upper half-plane. 
\end{abstract}

\maketitle

\tableofcontents

\section*{Introduction}
\emph{Rigid meromorphic cocycles} for the group $\SL_2(\Q)$ were introduced by Darmon--Vonk in \cite{DarmonVonksingularmoduli} to define real quadratic analogues of the classical notion of singular moduli.
Roughly speaking, a rigid meromorphic cocycle is a class in the first cohomology of Ihara's group $\SL_2(\Z[1/p])$ with values in the multiplicative group of nonzero meromorphic functions on the $p$-adic upper half-plane $\cH_p=\PP^1(\C_p)\setminus \PP^1(\Q_p)$ with prescribed image under the divisor map.
More precisely, Darmon and Vonk associate to each $\SL_2(\Z[1/p])$-orbit of \emph{RM points} on $\cH_p$ a divisor-valued cohomology class, and define rigid meromorphic cocycles as those classes whose image under the divisor map are finite linear combinations of these classes. 
One of their main results of \textit{loc.cit.}~is that the group of rigid meromorphic cocycles has infinite rank by showing that:
\begin{enumerate}[(i)]
\item\label{Step1} the group of divisor-valued cohomology classes is large, that is, the cohomology classes attached to $\SL_2(\Z[1/p])$-orbits of RM points are linearly independent and
\item\label{Step2} the obstruction to lifting a divisor-valued cohomology class to a rigid meromorphic cocycle is small, e.g., it is torsion as a module of an appropriate Hecke algebra.
\end{enumerate}
Linear independence of the divisor-valued cohomology classes is an immediate consequence of the fact that they can be lifted to divisor-valued modular symbols.
The lifting obstruction can be computed via van der Put's description of the space of invertible analytic functions on $\cH_p$ in terms of harmonic cochains on the Bruhat--Tits tree.

In \cite{DGL}, a generalization of the theory of rigid meromorphic cocycles to orthogonal groups was proposed.
In that theory, the role of Ihara's group is replaced by a $p$-arithmetic subgroup $\Gamma$ of the orthogonal group $\Ort(V)$ of a nondegenerate rational quadratic space $(V,q)$ of dimension $\dim(V)\geq 3$ and real signature $(r,s)$, $r\geq s$, and the $p$-adic upper half-plane is replaced by a $p$-adic symmetric space $X_p$ of dimension $\dim(V)-2$, on which $\Ort(V)$ acts.
Similarly to the $\SL_2$-case, rigid meromorphic cocycles are defined as the classes in the degree-$s$ cohomology of $\Gamma$ with values in the group of nonzero meromorphic functions on $X_p$ with image under the divisor map given by explicit divisor-valued cohomology classes.
These divisor-valued classes are called \emph{Kudla--Millson divisors}.
In light of the results of Darmon--Vonk, it is natural to ask the following:
\begin{Que}
Does the group of rigid meromorphic cocycles for $\Gamma$ have infinite rank (under some mild conditions on $\Gamma$)?
\end{Que}
Naturally, the question breaks down into two parts:
\begin{enumerate}[(i)]
\item Is the space of Kudla--Millson divisors large?
\item Is the space of lifting obstructions small?
\end{enumerate}
For answering the first question, note that the approach of Darmon--Vonk is not applicable in general since the theory of modular symbols is only available if the quadratic space is not anisotropic.
But even if that is the case, not all Kudla--Millson divisors can be lifted to modular symbols:
let us assume for the moment that $V$ is not anisotropic and of signature $(r,1)$.
In that case, it is shown in \cite[Section 2.6]{DGL} that Kudla--Millson divisors lift to divisor-valued modular symbols if they satisfy an additional assumption.
This assumption is always satisfied in signature $(2,1)$, roughly half of the time in signature $(3,1)$ and never if $r \geq 5$.
We will show in Section \ref{sec: divcohomology} that the additional assumption is in fact necessary in signature $(3,1)$, if the local quadratic space $V_p\coloneq V\otimes \Q_p$ is the direct sum of two hyperbolic planes.

To tackle the question in case of a four-dimensional quadratic spaces, we follow the approach of \cite{GehrmannquaternionicRC}.
In \textit{loc.cit.}, the cohomology of a $p$-arithmetic subgroup $\Gamma$ of an inner form $G$ of $\SL_2$ with values in the group $\Div^\dagger(\cH_p)$ of locally finite divisors on $\cH_p$ is computed using the following two ingredients:
Firstly, there is a completely group theoretic description of $\Div^\dagger(\cH_p)$.
More precisely, $\Div^\dagger(\cH_p)$ can be written as a finite direct sum of $\Gamma$-modules of the form $\Coind_{\Gamma_\circ}^{\Gamma}(\oplus_{T}\Ind_{\Gamma_{\circ,T}}^{\Gamma_{\circ}}(\Z))$, where $\Gamma_\circ\subseteq \Gamma$ is an arithmetic subgroup, $T\subseteq F$ runs through a certain set of subtori and $\Gamma_{\circ,T}$ denotes the intersection $\Gamma_\circ \cap T$.
Secondly, Shapiro's lemma for homology and cohomology together with Bieri--Eckmann duality reduces the question to the computation of the integral homology of the groups $\Gamma_{\circ,T}$. 
The difficulty in generalizing the approach to the setting of orthogonal groups is that the group $\Div^\dagger_{\ratq}(X_p)$ of locally finite rational quadratic divisors on $X_p$ (see \cite[Section 2.3]{DGL}) in general does not have a simple description.
Nevertheless, in case the local quadratic space $V_p$ has Witt index $2$, that is, it is the direct sum of two hyperbolic planes, it has a two step resolution by modules with similar properties as in the $\SL_2$-case (see Proposition \ref{pro: divisorresolution}).
Analyzing this two-step filtration and applying Bieri--Eckmann duality arguments as in \cite{GehrmannquaternionicRC}
yields that $\HH^s(\Gamma, \Div^\dagger_{\ratq}(X_p))$ is a free $\Z$-module of infinite rank for any $p$-arithmetic subgroup $\Gamma\subseteq \SO(V)$ that is small enough provided that $V$ is anisotropic and the Witt index of $V_p$ is $2$ (see Theorem \ref{thm: divisors}).
Moreover, in Section \ref{sec: intersections} we prove that the explicit cohomology classes attached to $\Gamma$-orbits of positive lines in $V$ constructed in \cite[Section 2.4]{DGL} provide a basis of $\HH^s(\Gamma, \Div^\dagger_{\ratq}(X_p))$.

For answering the second question, we generalize van der Put's description of invertible analytic functions on analytic subsets of $\PP^1$.
Let us recall the classical result:
let $F$ be a complete non-Archimedean field and $\cL\subseteq \PP^1(F)$ be a compact subset.
The complement $\Omega_{\cL}$ of $\cL$ in $\PP^1_F$ carries the structure of a rigid analytic variety over $F$.
By a classical theorem of van der Put (see \cite[Theorem 2.7.11]{FvdP}) there is a natural isomorphism
\[
\cO(\Omega_{\cL})^\times\hspace{-0.2em}/F^\times \xlongrightarrow{\sim} M(\cL)_0,
\]
where $M(\cL)_0$ denotes the space of $\Z$-valued measures on $\cL$ of total mass $0$.
Let $\underline{\cL}=(\cL_1,\ldots, \cL_s)$ be a finite collection of compact subsets of $\PP^1(F)$ and consider the product $\Omega_{\underline{\cL}}\coloneq \prod_{i=1}^{s} \Omega_{\cL_i}$.
In Theorem \ref{thm: functions} below we show that the canonical homomorphism
\begin{align*}
\bigoplus_{i=1}^{s} \cO(\Omega_{\cL_i})^\times\hspace{-0.2em}/F^\times &\xlongrightarrow{\sim} \cO(\Omega_{\underline{\cL}})^\times\hspace{-0.2em}/F^\times\\
\quad (f_1,\ldots,f_s) &\mapstoo [(z_1,\ldots,z_s)\mapsto f_1(z_1)\cdot \ldots \cdot f_s(z_s)]
\end{align*}
is an isomorphism.
Returning to the setting of a four-dimensional rational quadratic space $V$, let $\cA \coloneq \cO(X_p)$ denote the ring of rigid analytic functions on the $2$-dimensional $\Q_p$-rigid analytic space $X_p$.
In case the local quadratic space $V_p$ is the sum of two hyperbolic planes, $X_p$ factors as a product of two $p$-adic upper half-planes.
Moreover, this factorization is compatible with the exceptional isomorphism between the Spin group of $V_p$ and the product of two copies of $\SL_2(\Q_p)$.
The isomorphism above, together with van der Put's result, yields an isomorphism
\[
\cA^\times\hspace{-0.2em}/\Q_p^\times \cong M(\PP^1(\Q_p))_0 \oplus M(\PP^1(\Q_p))_0
\]
compatible with the $\SL_2(\Q_p)\times \SL_2(\Q_p)$-actions on each side.
In case $V_p$ has Witt index $1$, we show that $X_p$ can be identified with a twisted version of a product of two $p$-adic upper half-planes associated to a quadratic extension $E$ of $\Q_p$.
Moreover, this identification is compatible with the exceptional isomorphism $\Spin(V_p)\cong \SL_2(E)$.
This yields a $\SL_2(E)$-equivariant isomorphism
\[
\cA^\times\hspace{-0.2em}/\Q_p^\times \cong M(\PP^1(E))_0.
\]
Additionally, we show that the Picard group of $X_p$ vanishes for both Witt index $2$ and Witt index $1$.
Using the standard resolution of the space of measures of total mass $0$ via the Bruhat--Tits tree, one can compute the cohomology of $p$-arithmetic subgroups of $\SO(V)$ with values in $\cA^\times$ in terms of spaces of automorphic forms.
For example, we show that $\HH^{s+1}(\Gamma,\cA^\times)$ is a finitely generated $\Z$-module if $V_p$ has Witt index $2$ (see Corollary \ref{cor: finiteness}) for every $p$-arithmetic subgroup $\Gamma\subseteq \SO(V)$. 
Moreover, we show that there are no interesting analytic theta cocycles, that is classes in $\HH^s(\Gamma,\cA^\times\hspace{-0.2em}/\Q_p^\times)$, if $V_p$ has Witt index $2$ and $s=1$.
More precisely, the rank of $\HH^1(\Gamma,\cA^\times\hspace{-0.2em}/\Q_p^\times)$ is $2$ in that case independent of the group $\Gamma$ (see Proposition \ref{pro: trivial}).

\medskip

\noindent \textbf{Acknowledgements}: Both authors received funding from Deutsche Forschungsgemeinschaft (DFG, German Research Foundation) via the grant SFB-TRR 358/1 2023 -- 491392403.

\medskip
\noindent \textbf{Notations}:
All rings are commutative.
Given a ring $R$, we write $R^\times$ for its unit group.
Let $G$ be a group.
A $G$-module is always a left $G$-module.
Given a right $G$-module, we also consider it as a $G$-module via precomposing with inversion on $G$. 
Let $H\subseteq G$ be a subgroup and $M$ an $H$-module.
Right multiplication turns the group algebra $\Z[G]$ into a $H$-module.
Recall that the coinduction of $M$ from $H$ to $G$ is the module
\[
\Coind_{H}^{G}(M)\coloneq \Hom_{\Z[H]}(\Z[G],M),
\]
on which $G$-action acts via left translation on $\Z[G]$.
More concretely, the coinduction can be described as the space of functions
\[
\Coind_{H}^{G}(M)\coloneq \{f\colon G \to M \mid f(gh)=h^{-1}(f(g))\ \forall h\in H, g\in G \}.
\]
The induction of $M$ from $H$ to $G$ is the defined as the tensor product
\[
\Ind_{H}^{G}(M)\coloneq \Z[G] \otimes_{\Z[H]} M
\]
with $G$-action given by left translation on $\Z[G]$.
Let $N$ be a $G$-module.
The map
\begin{align}\label{eq: indiso}
\begin{split}
\Hom_{\Z}(\Ind_{H}^{G}(M),N)&\xlongrightarrow{\sim}\Coind_{H}^{G}(\Hom_{\Z}(M,N))\\
\phi &\mapstoo [g \mapsto g^{-1}.\phi(g\otimes m)]
\end{split}
\end{align}
is an isomorphism of $G$-modules.


\section{Local geometry}\label{sec:geometry}
We fix a field $F$ which is complete with respect to a nontrivial non-Archimedean valuation $|\cdot|\colon F\to \mathbb{R}_{\geq 0}$.
Write $\cR_F$ for the valuation ring of $F$ with maximal ideal $\m$.
In the following, rigid analytic varieties will always be defined over $F$ unless stated otherwise. 
If $X$ is a rigid analytic variety, write $\cO$ for the sheaf of rigid analytic functions on $X$ and $\cO^\times$ for the sheaf of invertible rigid analytic functions on $X$.
If $f\in \cO(X)$ is a rigid analytic function on an affinoid space $X$, put $||f||_X\coloneq \sup_{x\in X}|f(x)|$.

\subsection{Invertible analytic functions} 
We consider the projective line $\PP^1_F$ viewed as a rigid analytic variety over $F$.
A point $z=[z_0:z_1]\in \PP^1_F$ is always assumed top be written in unimodular coordinates, that is, $|z_0|,|z_1|\leq 1$ and $|z_i|=1$ for at least one variable.
\subsubsection{Standard subsets.}
A \emph{standard open disc} is a proper subset of $\PP^1_F$ of the form $\{ [z_0:z_1]\in \PP^1_F \mid |\ell(z_0,z_1)|<r\}$ with $\ell\colon F^2 \to F$ a nonzero linear form and $r\in |F^\times|$.
Following \cite{vdP}, a rational subset of $\PP^1_F $ is called a \emph{standard subset} if it is the complement of a finite pairwise disjoint union of standard open discs.
If $\cC$ is a nonempty finite collection of nonempty pairwise disjoint standard open discs, then we denote by
\[
U_{\cC}\coloneq \PP^1_F\setminus \bigcup_{D\in\cC}D
\]
the corresponding standard subset.
Let us recall the description of invertible analytic functions on rigid analytic spaces of the form $U_{\cC}\times Y$ with $Y$ a connected affinoid space (see the lemma on page 195 of \cite{vdP}).

\begin{Lem}\label{lem: functions}
Let $U_{\cC}\subseteq \PP^1_F$ be a standard subset and $Y$ a geometrically connected affinoid space.
Every invertible rigid analytic function $f\in \cO(U_{\cC}\times Y)^\times$ can be written as a product
\begin{equation}\label{eq: functions}
f(x,y)=f_1(y)\cdot u(x,y)\cdot r(x)
\end{equation}
with $f_1\in \cO(Y)^\times$, $u\in \cO(U_{\cC}\times Y)^\times$, $||u-1||_{U_{\cC}\times Y} < 1$ and $r\colon \PP^1_F\to \PP^1_F$ a rational function with divisor supported in the union $\cup_{D\in\cC}D$.
Moreover, the integer
\[
\ord_{D}(f)\coloneq \sum_{x\in D} \ord_{x}(r) 
\]
for $D\in \cC$ does not depend on the decomposition.
\end{Lem}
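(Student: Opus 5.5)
We prove this by reducing to the structure of the affinoid algebra of $U_{\cC}\times Y$ as a ring of Laurent-type series in one variable over $A\coloneq\cO(Y)$. After a Möbius transformation we may assume $\infty$ lies in one of the discs, and we write the others as $D_j=\{|x-a_j|<r_j\}$, $j=1,\dots,n$, with the outer disc $\{|x|>r_0\}$. Since $U_{\cC}\times Y$ is affinoid, the Mittag-Leffler decomposition for $U_{\cC}$ base-changes to it. Every $g\in\cO(U_{\cC}\times Y)$ then has a unique expansion
\[
g=\sum_{k\geq 0} c_{0,k}x^k+\sum_{j=1}^{n}\sum_{k\geq 1}c_{j,k}(x-a_j)^{-k},\qquad c_{j,k}\in A,
\]
with the appropriate convergence conditions. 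Moreover $\|g\|$ equals the maximum of the sup norms of the pieces, measured with $\|c_{j,k}\|_Y r_j^{\mp k}$.

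\textbf{Normalization at a point.} Take $f$ invertible. We may rescale by a constant in $F^\times$ (absorbed into $f_1$) so that $\|f\|=1$. Using the maximum principle on the affinoid $Y$ and reduction, we study the reduction $\bar f$ of $f$ in the reduced algebra. This is $\bar A\otimes$ (coordinate ring of a tree of $\mathbb{P}^1$'s minus points). It is a unit there because $f$ and $f^{-1}$ both have norm $1$, after possibly replacing $\|\cdot\|$ by the spectral norm. We assume here $r_j\in|F^\times|$, which holds by definition, and use the spectral seminorm throughout.

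\textbf{Key step.} The key step is to show that units of $\bar A[\bar x,(\bar x-\bar a_j)^{-1}]$-type rings (and of their tree-shaped analogues) are of the form $\bar c\cdot\prod(\bar x-\bar a_j)^{m_j}$ with $\bar c\in\bar A^\times$. This uses geometric connectedness of $Y$: $\bar A$ has connected spectrum, so the exponents, which are locally constant functions on $\operatorname{Spec}\bar A$, are constant. Nilpotents are handled by passing to the reduced ring and absorbing them into the $\|u-1\|<1$ factor.

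\textbf{Lifting.} Lift the reduction to $r(x)=\prod_j(x-a_j)^{m_j}$, times a lift $f_1\in A^\times$ of $\bar c$ (units lift since $A$ is complete with respect to the spectral norm). The quotient $u=f/(f_1 r)$ then satisfies $\|u-1\|<1$. If the reduction is not a single $\mathbb{P}^1$, we iterate this over the finitely many components of the reduction of $U_{\cC}$, adjusting by rational factors $(x-b)$ with $b$ inside the relevant discs. This makes the divisor of $r$ supported in $\bigcup D$.

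\textbf{Uniqueness of $\ord_D$.} Suppose $f_1ur=f_1'u'r'$. Then $r/r'=(f_1'/f_1)(u'/u)$. Restrict to a fixed point $y\in Y$ and compute winding numbers on a circle $\{|x-a_D|=\rho\}$ slightly outside $D$ inside $U_{\cC}$. On that circle, a unit $v$ with $\|v-1\|<1$ has reduction $1$ and hence winding number $0$. The function $f_1(y)$ is constant in $x$. A rational function with divisor in $\bigcup D$ has winding number equal to $\sum_{x\in D}\ord_x$. Hence the two sums agree.

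The main obstacle is the unit computation of the reduced ring over a possibly non-reduced or non-integral $\bar A$. The remedy is the locally constant exponent argument together with connectedness.
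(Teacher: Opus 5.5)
Your existence argument has a genuine gap at the normalization step. Rescaling by a constant only gives $||f||=1$. The claim that then also $||f^{-1}||=1$ is false, and no constant rescaling repairs it. That claim is equivalent to $|f|$ being constant on $U_{\cC}\times Y$, which is exactly what you need for $\bar f$ to be a unit in the reduction.

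Two examples show this. First, take $Y$ a point, $U_{\cC}=\{|\pi|\leq |x|\leq 1\}$ (the complement of $\{|x|>1\}$ and $\{|x|<|\pi|\}$) and $f=x$. Second, take $U_{\cC}$ arbitrary, $Y=\{|\pi|\leq|y|\leq 1\}$ (geometrically connected) and $f=y$. In both cases $||cf||\cdot||(cf)^{-1}||=|\pi|^{-1}$ for every $c\in F^\times$. Moreover, the reduction of $f$ is a zero divisor: it is the class of $T$ in $k[T,S]/(TS)$, the reduction of the annulus, with $k$ the residue field. So your classification of units over $\bar A$ is never reached for such $f$.

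The second example shows the problem is not confined to $U_{\cC}$ with reducible reduction, where your ``iterate over components'' is in any case only a gesture, not an argument. The $y$-dependence of $|f|$ must first be removed by a non-constant factor $f_1\in\cO(Y)^\times$, e.g.\ $f_1(y)=f(x_0,y)$ for a point $x_0$. Even then you need fiberwise input to see that $|f/f_1|$ is constant in $x$ on a piece $B'$ of $U_{\cC}$ with irreducible reduction. That input is multiplicativity of the sup norm on each fiber, which is essentially the one-variable lemma.

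The paper sidesteps this by working fiberwise.
\begin{enumerate}[(i)]
\item For $Y$ a point, existence is the Mittag-Leffler argument of Fresnel--van der Put.
\item For general $Y$, it uses that $\ord_D(f_y)$ is independent of $y$. The paper only asserts this. Your locally-constant-exponent argument over $\bar A$, applied to $f/f(x_0,\cdot)$ on $B'\times Y$, is a reasonable way to justify it.
\item It takes $r$ with these orders and puts $f_1(y)=(f/r)(x_0,y)$.
\item Then $u=f/(f_1 r)$ satisfies $|u-1|<1$ at every point, hence $||u-1||<1$ by the maximum principle on the affinoid $U_{\cC}\times Y$.
\end{enumerate}

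Your uniqueness argument also needs adjusting, because a circle around $D$ inside $U_{\cC}$ need not exist. Over $\Q_p$, take $\cC=\{\{|x|>1\},\{|x|<1\},\{|x-1|<1\}\}$ and $D=\{|x|<1\}$. Every circle $|x-a|=\rho$ with $a\in D$ then lies in $D$, lies in $\{|x|>1\}$, or meets $\{|x-1|<1\}$. Instead, as the paper does, use the closed disc around $D$ minus the other residue discs of the same radius. Read off $\ord_D$ as the order of the reduced rational function at the residue class of $D$.
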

\begin{proof}
To make the paper as self-contained as possible we will give a sketch of the proof.
First, let us assume that $Y$ consists of a point.
The existence of a decomposition as in \eqref{eq: functions} is a simple consequence of the Mittag-Leffler decomposition (see the proof of \cite[Proposition 2.5.10]{FvdP}).
It remains to show that $\ord_{D}(f)$ does not depend on the decomposition for every $D\in \cC$.
After applying an appropriate Möbius transformation, we may assume that $D=D(0,1)$ is the open disc around $0$ of radius $1$.
Let $B$ be the closed disc around $0$ of radius $1$.
Choose finitely many elements $a_1,\cdots,a_k \in \cR_F$ with $a_1=0$, $a_i \ncong a_j \bmod \m$ for $i\neq j$ such that $B'\coloneq B\setminus \cup_{i=1}^{k}D(a_i,1)\subseteq U_{\underline{D}}$, where $D(a_i,1)$ denotes the open disc around $a_i$ of radius $1$.
After rescaling we may assume that $||f||_{B'}=1$.
In particular, $f$ lies in the subring $\cO^\circ(B^\prime)\coloneq \{ g\in \cO(B^\prime) \mid  ||g||_{B^\prime}\leq 1\}\subseteq \cO(B')$.
The reduction of $f$ modulo the ideal $\cO^{\circ\circ}(B^\prime)\coloneq \{ g\in \cO^\circ(B^\prime) \mid  ||g||_{B^\prime}<1\}$ defines a rational function on the projective line over the residue field of $F$ and $\ord_{D}(f)$ is simply the order of vanishing at $0$ of that rational function (see the proof of \cite[Theorem 2.2.9]{FvdP}).

In the case of a general $Y$ we first note that the fact that $\ord_{D}(f)$ does not depend on the decomposition follows directly from the case that $Y$ is a point. For the existence, note that for any point $y\in Y$ the invertible rigid analytic function $f_y\coloneq f(x,y)\in \cO(U_{\cC})^\times$ admits a decomposition.
The assumption that $Y$ is geometrically connected implies that $\ord_D(f_y)$ does not depend on $y\in Y$ and, thus, one easily constructs the desired decomposition. For more details, see \cite[Lemma B.2.4]{SpreheThesis}.
\end{proof}

Given a profinite set $\cL$ we write $M(\cL,\Z)$ for the space of $\Z$-valued measures on $\cL$ and $M(\cL,\Z)_0\subseteq M(\cL,\Z)$ for the subspace of measures of total mass zero.
Write $\delta_x$ for the Dirac delta measure of $x\in \cL$. 
The last part of Lemma \ref{lem: functions} yields the homomorphism
\[
\ord_\cC\colon \cO(U_{\cC}\times Y)^\times\too M(\cC,\Z)_0,\quad f \mapstoo \sum_{D\in \cC} \ord_D(f) \delta_{D}.
\]
Let $U_{\cC'}$ be another standard subset containing $U_{\cC}$.
Then every $D' \in \cC'$ is contained in a unique $D\in \cC$.
The assignment $\delta_{D'}\mapsto \delta_{D}$ induces a homomorphism $M(\cC',\Z)_0\to M(\cC,\Z)_0$, which fits into the commutative diagram:
\begin{equation}\label{eq: ord}
\begin{tikzcd}
	{\cO(U_{\cC'}\times Y)^\times} & {} & M(\cC',\Z)_0 \\
	{\cO(U_{\cC} \times Y)^\times} && M(\cC,\Z)_0
	\arrow["\ord_{\cC'}", from=1-1, to=1-3]
	\arrow["\res",from=1-1, to=2-1]
	\arrow[from=1-3, to=2-3]
	\arrow["\ord_\cC", from=2-1, to=2-3]
\end{tikzcd}
\end{equation}

Given standard subsets $U_{\cC_1},\ldots , U_{\cC_s}\subseteq \PP^1_F$ indexed by the tuple $\underline{\cC}=(\cC_1,\ldots,\cC_s)$
we abbreviate 
\[
U_{\underline{\cC}}\coloneq U_{\cC_1} \times \cdots \times  U_{\cC_s}.
\]
Applying Lemma \ref{lem: functions} inductively one immediately gets a description of the space of invertible analytic functions on $U_{\underline{\cC}}$:
\begin{Cor}\label{cor: functions}
Let $U_{\cC_1},\ldots , U_{\cC_s}\subseteq \PP^1_F$ be standard subsets indexed by the tuple $\underline{\cC}=(\cC_1,\ldots,\cC_s)$.
Every invertible rigid analytic function $f\in \cO(U_{\underline{\cC}})^\times$ can be written as a product
\[
f(x_1,\ldots,x_s)=u(x_1,\ldots,x_s) \cdot\prod_{i=1}^{s} r_i(x_i)
\]
with $u\in \cO(U_{\underline{\cC}})^\times$ such that $||u-1||_{U_{\underline{\cC}}} < 1$ and $r_i\colon \PP^1_F\to \PP^1_F$ rational functions with divisor supported on $\cup_{D\in\cC_i}D$.
Moreover, the integer
\[
\ord_{i,D}(f)\coloneq \sum_{x\in D} \ord_{x}(r_i) 
\]
for $D\in \cC_i$ does not depend on the decomposition.
\end{Cor}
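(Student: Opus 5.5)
We argue by induction on $s$, applying Lemma \ref{lem: functions} at each step. For $s=1$ the statement is Lemma \ref{lem: functions} with $Y$ a point, where the constant $f_1$ is absorbed into $r_1$. For $s\geq 2$ put $Y\coloneq U_{\cC_2}\times\cdots\times U_{\cC_s}$. Each $U_{\cC_i}$ is a connected rational subset of $\PP^1_F$, and this stays true after any finite base extension. Hence $Y$ is a geometrically connected affinoid space, being a finite product of such. Lemma \ref{lem: functions} applied to $U_{\cC_1}\times Y$ therefore gives
\[
f(x_1,y)=f_1(y)\cdot u_1(x_1,y)\cdot r_1(x_1),
\]
with $f_1\in\cO(Y)^\times$, with $\|u_1-1\|<1$, and with $r_1$ a rational function whose divisor is supported on $\cup_{D\in\cC_1}D$.

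By induction, $f_1(y)=u'(x_2,\ldots,x_s)\prod_{i\geq 2}r_i(x_i)$ with $\|u'-1\|_Y<1$. Set $u\coloneq u_1\cdot (u'\circ\mathrm{pr}_Y)$. Then
\[
u-1=(u_1-1)u'+(u'-1),
\]
and $\|u'\|=1$, so the ultrametric inequality gives $\|u-1\|_{U_{\underline{\cC}}}<1$. This proves existence.

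For uniqueness, suppose we have two decompositions $u\prod r_i=u'\prod r_i'$. Fix $i$ and fix arbitrary points $x_j\in U_{\cC_j}$ for all $j\neq i$, taken over a finite extension if $U_{\cC_j}(F)$ is empty. Restricting to the slice in the $i$-th coordinate gives two decompositions of the one-variable function $x_i\mapsto f(\ldots,x_i,\ldots)$ on $U_{\cC_i}$. In each, the factor $\prod_{j\neq i}r_j(x_j)$ is a nonzero constant, and the restricted unit still satisfies $\|u|_{\text{slice}}-1\|<1$. These constants are absorbed into $r_i$, which does not change $\sum_{x\in D}\ord_x(r_i)$.

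The uniqueness part of Lemma \ref{lem: functions}, applied over the field where the points live, now gives $\ord_{i,D}(f)=\sum_{x\in D}\ord_x(r_i)=\sum_{x\in D}\ord_x(r_i')$. The order of a rational function at a disc is unchanged under base field extension, so this identity holds over $F$ as well.

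The main point to check is that $Y$ is a geometrically connected affinoid. This holds because a standard subset is the complement of finitely many disjoint open discs. Such a set is an affinoid whose reduction is a connected rational curve (a projective line with finitely many points removed), and connectedness of reductions is preserved under products. The remaining steps are formal.
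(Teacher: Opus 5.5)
Your proposal is correct and is exactly the argument the paper intends: the paper simply says the corollary follows by applying Lemma~\ref{lem: functions} inductively, with $Y$ the product of the remaining standard subsets, and your slicing argument for independence of the decomposition mirrors how the lemma itself reduces uniqueness to the case where $Y$ is a point. One side remark needs correcting: the canonical reduction of a general standard subset is a connected tree of rational curves rather than a single projective line minus finitely many points (for example, an annulus $\{|\pi|\le |z|\le 1\}$ reduces to two crossing lines), but only connectedness is used, so the argument is unaffected.
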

As in the case of a product of a standard set with a connected affinoid, the decomposition of Corollary \ref{cor: functions} yields a well-defined homomorphism
\[
\ord_{i}\colon U_{\underline{\cC}}\too M(\cC_i,Z)_0, \quad f  \mapstoo \sum_{D\in \cC_i} \ord_D(f) \delta_{D}.
\]
for every $1\leq i \leq s$.
We write $\ord_{\underline{\cC}}$ for the direct sum of the $\ord_{i}$, $1\leq i \leq s$.

Recall the following vanishing result of the higher cohomology groups of the sheaf of invertible analytic functions on generalized polydiscs by van der Put (see \cite[Corollary (3.10)]{vdP}).
\begin{Pro}\label{pro: vdP}
Let $X$ be a finite product of standard subsets of $\PP^1_{F}$. Then:
\[
\HH^i(X, \cO^\times)=0\quad \forall i >0.
\]
\end{Pro}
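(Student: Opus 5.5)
This is a cited result of van der Put (\cite[Corollary (3.10)]{vdP}), so the plan reconstructs the standard argument rather than proving anything new. The strategy is to compare the multiplicative sheaf $\cO^\times$ with the additive sheaf $\cO$, whose higher cohomology vanishes on affinoids, via the decomposition of Corollary \ref{cor: functions}.

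\textbf{Step 1: a filtration of $\cO^\times$.} Since $X=U_{\underline{\cC}}$ is affinoid, Tate acyclicity gives $\HH^i(X,\cO)=0$ for $i>0$. I would filter $\cO^\times$ by the subsheaves $\cO^{\times}_{<1}$ of units $u$ with $\|u-1\|<1$ on affinoids, and more finely by $1+\pi^n\cO^\circ$ for a suitable topologically nilpotent $\pi$. Each graded piece $(1+\pi^n\cO^\circ)/(1+\pi^{n+1}\cO^\circ)$ is isomorphic to $\cO^\circ/\pi\cO^\circ$.

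\textbf{Step 2: cohomology of the small units.} The sheaf $\cO^\circ/\pi$ is the structure sheaf of the reduction. For a product of standard subsets, the canonical reduction is a product of trees of projective lines with points removed, which is an affine-like curve configuration. For a Čech covering by smaller products of standard subsets (which is cofinal), I expect the higher cohomology of the graded pieces to vanish, in the same way that $\HH^1(\PP^1,\cO)=0$ and the open-affine pieces are acyclic. Passing to the inverse limit via Mittag-Leffler (the transition maps are surjective) would then give $\HH^i(X,\cO^\times_{<1})=0$. An alternative that avoids reductions is to work with the logarithm, which identifies $1+\pi^n\cO^\circ$ with $\pi^n\cO^\circ$ for $n$ large. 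This handles the deep part of the filtration, leaving only finitely many graded steps.

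\textbf{Step 3: the quotient.} By Corollary \ref{cor: functions}, on every product of standard subsets $W\subseteq X$ the quotient $\cO^\times(W)/(F^\times\cdot\cO^\times_{<1}(W))$ is described by the homomorphism $\ord_{\underline{\cC}}$ to $\bigoplus_i M(\cC_i,\Z)_0$, which is combinatorial data from rational functions. I would compute the Čech complex of this combinatorial presheaf for coverings by products of standard subsets, using compatibility with restriction from diagram \eqref{eq: ord}. The aim is to show its higher cohomology vanishes, essentially because the rational functions $r_i$ are globally defined on $\PP^1$. Combined with the constants $F^\times$, whose higher cohomology vanishes on the connected space $X$, the long exact sequences would then yield $\HH^i(X,\cO^\times)=0$.

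\textbf{Main obstacle.} I expect the hardest point to be ensuring that Čech cohomology over coverings by products of standard subsets computes sheaf cohomology, and that the combinatorial quotient is genuinely acyclic. Already in one variable, $\HH^1$-vanishing of $\cO^\times$ amounts to triviality of line bundles on such sets. For that step I would first try to reduce to one variable inductively, using the fibrewise decomposition of Lemma \ref{lem: functions} with $Y$ the remaining factors.
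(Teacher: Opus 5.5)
The paper gives no argument for this statement; it quotes van der Put \cite[Corollary (3.10)]{vdP}. Your sketch therefore has to stand on its own, and it does not. The two vanishing statements that carry all the weight are only announced: acyclicity of the graded pieces of the small units in Step 2, and acyclicity of the combinatorial quotient in Step 3. Moreover, the mechanism you propose for both rests on a false premise, namely that coverings by products of standard subsets are cofinal among admissible coverings of $X$. They are not.

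Let $B$ be the closed unit disc over an algebraically closed $F$, $0<|\pi|<1$, and cover $B\times B$ by the rational subdomains $W_1=\{|x-y|\le|\pi|\}$ and $W_2=\{|x-y|\ge|\pi|\}$. If $U\times U'\subseteq W_1$, then fixing $y\in U'$ shows that $U$ lies in a closed disc of radius $|\pi|$. Diagonal points lie only in $W_1$. An admissible covering of the affinoid $B\times B$ by products of standard subsets refining $\{W_1,W_2\}$ has a finite subcovering, so finitely many discs of radius $|\pi|$ would cover $B(F)$. This is impossible because the residue field is infinite.

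Over a non-closed $F$ the failure already occurs in one variable. Take $F=\Q_p$ with $p$ odd and a unit $u$ whose reduction is a nonsquare. The Weierstrass domain $\{|z^2-u|\le|p|\}\subseteq B$ contains no nonempty standard subset: such a set is geometrically connected and Galois-stable, while the domain splits over $\Q_p(\sqrt u)$ into two conjugate discs.

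Consequently, \v{C}ech computations over products of standard subsets do not compute cohomology on the G-topology. These are the only sets on which Corollary~\ref{cor: functions} gives you the description of $\cO^\times/(F^\times\cO^\times_{<1})$ via $\ord_{\underline{\cC}}$. No Cartan--Leray argument is available either, since these sets do not form a basis. The same objection applies to computing the cohomology of $\cO^\circ/\pi\cO^\circ$ on the affine canonical reduction. Only formal coverings come from the reduction; general admissible coverings require admissible blow-ups whose special fibres are not affine. Also, $\cO^\circ/\pi\cO^\circ$ is not the structure sheaf of the canonical reduction; that sheaf is $\cO^\circ/\cO^{\circ\circ}$.

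Step 2 has further problems:
\begin{enumerate}[(i)]
\item Tate acyclicity concerns $\cO$, not $\cO^\circ$. It only tells you that $\HH^i(X,\cO^\circ)$ is $\pi$-power torsion. Integral vanishing statements of this kind are genuinely nontrivial, and they are exactly what would have to be proved.
\item Your logarithm variant lands in $\pi^n\cO^\circ$, so it meets the same issue. It is also unavailable when $\mathrm{char}(F)>0$, which the paper allows.
\item The filtration by $1+\pi^n\cO^\circ$ does not exhaust $\cO^\times_{<1}$. This fails when $|F^\times|$ is dense, and on affinoids such as $\{|z^2|\le|\pi|\}$, where $1+z$ is a small unit not in $1+\pi\cO^\circ$.
\item Connectedness alone does not give acyclicity of the constant sheaf $F^\times$.
\end{enumerate}

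The inductive route in your last sentence is the more promising idea: the Leray spectral sequence for $p\colon U_{\cC}\times Y\to Y$ combined with Lemma~\ref{lem: functions}. But it does not close as stated. Vanishing of $R^jp_\ast\cO^\times$ is a local statement over arbitrary affinoid subdomains $Y'\subseteq Y$, not just products of standard subsets. You would still need the higher cohomology of the small-unit part on $U_{\cC}\times Y'$ to vanish, which is again the uncontrolled integral question.
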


\subsubsection{Complements of compact sets.}
We utilize the above results to compute the cohomology of products of Drinfeld upper half spaces with values in $\cO^\times$.
More generally, we study products of spaces of the form
\[
\cH_{\cL}\coloneq \PP^1_F \setminus \cL
\]
with $\cL\subseteq \PP^1_F(F)$ compact.
Note that $\cH_{\cL}$ naturally carries the structure of a connected rigid analytic subspace of $\PP^1_F$.
More concretely, the compactness of $\cL$ implies that we may write $\cH_{\cL}$ as the union of an increasing sequence of standard subsets $U_{\cC(n)}$, $n\geq 1$.
Sending an element in $\cL$ to the unique disc of $\cC(n)$ containing it yields a homeomorphism
\[
\cL \xlongrightarrow{\sim}\varprojlim_n \cC(n).
\]
Thus, by the commutativity of the diagram \eqref{eq: ord} we get a homomorphism
\[
\cO(\cH_{\cL})^\times\hspace{-0.2em}/F^\times \too M(\cL,\Z)_0,
\]
which is well known to be an isomorphism (see \cite[Theorem 2.7.11]{FvdP}).
In particular, the only functions $f\in \cO(\cH_{\cL})^\times$ with $||f-1||_{\cH_{\cL}}<1$ are constant and the quotient $\cO(\cH_{\cL})^\times\hspace{-0.2em}/F^\times$ does not depend on the choice of base field $F$.
Moreover, $\HH^{i}(\cH_{\cL},\cO^\times)=0$ for all $i\geq 1$ (see \cite[2.7.6]{FvdP}).
In case $F$ is a local field, that is, $F$ is locally compact, we write $\cH_F\coloneq \cH_{\PP^1_F(F)}$ for Drinfeld's upper half-plane over $F$.
In that case, the isomorphism
\[
\cO(\cH_{F})^\times\hspace{-0.2em}/F^\times \too M(\PP^1(F),\Z)_0
\]
is equivariant with respect to the action of $\PGL_2(F)$ on both sides via Möbius transformations.

Given a tuple $\underline{\cL}=(\cL_1,\ldots,\cL_s)$ of compact subsets of $\PP^1(F)$ we put
\[
\cH_{\underline{\cL}}\coloneq \cH_{\cL_1}\times\cdots\times\cH_{\cL_s}.
\]
We may cover $\cH_{\underline{\cL}}$ by an increasing union of products of standard subsets $U_{\underline{\cC(n)}}$, $n\geq 1$.
As in the one-dimensional case, taking the limit of the homomorphisms $\ord_{\underline{\cC(n)}}$ yields a homomorphism
\begin{equation}\label{eq: prodmeasures}
\cO(\cH_{\underline{\cL}})^\times \too \bigoplus_{i=1}^{s} M(\cL_i,\Z)_0.
\end{equation}
\begin{Thm}\label{thm: functions}
Let $\underline{\cL}=(\cL_1,\ldots,\cL_s)$ be a tuple of compact nonempty subsets of $\PP^1_F(F)$.
The homomorphism \eqref{eq: prodmeasures} induces an isomorphism
\[
\cO(\cH_{\underline{\cL}})^\times\hspace{-0.2em}/F^\times \xlongrightarrow{\sim}  \bigoplus_{i=1}^{s} M(\cL_i,\Z)_0.
\]
Moreover:
\[
\HH^i(\cH_{\underline{\cL}},\cO^\times)=0 \quad \mbox{for all}\ i\geq 1.
\]
\end{Thm}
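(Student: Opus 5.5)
Write $\cH_{\underline{\cL}}=\bigcup_n U_{\underline{\cC(n)}}$ with $U_{\underline{\cC(n)}}\subseteq U_{\underline{\cC(n+1)}}$ an increasing admissible covering by products of standard subsets. We choose the $\cC_i(n)$ so that each disc of $\cC_i(n)$ meets $\cL_i$, and so that every disc of $\cC_i(n+1)$ is strictly smaller than the disc of $\cC_i(n)$ containing it. Both parts of the theorem will be obtained by passing to the limit over $n$. The inputs are Corollary \ref{cor: functions} and Proposition \ref{pro: vdP} for the finite stages, and van der Put's one-variable isomorphism $\cO(\cH_{\cL_i})^\times/F^\times\cong M(\cL_i,\Z)_0$.

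\emph{Surjectivity.} The map $(f_1,\ldots,f_s)\mapsto \prod_i f_i(x_i)$ from $\bigoplus_i \cO(\cH_{\cL_i})^\times$ is compatible with $\ord$. For a function depending only on $x_i$, Corollary \ref{cor: functions} gives the decomposition $u=1$, $r_j=1$ for $j\neq i$. Hence $\ord_{\underline{\cC(n)}}$ of the product is $(\ord_{\cC_i(n)}f_i)_i$. Composing with van der Put's isomorphism in each variable shows that \eqref{eq: prodmeasures} is surjective.

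\emph{Injectivity modulo $F^\times$.} Suppose $f\in\cO(\cH_{\underline{\cL}})^\times$ has trivial image. After dividing by a product of one-variable functions (again using van der Put), it suffices to show that $f$ is constant. By Corollary \ref{cor: functions}, on each $U_{\underline{\cC(n)}}$ we can write $f=u_n\prod_i r_{i,n}(x_i)$ with $\|u_n-1\|<1$ and $\ord_{i,D}(r_{i,n})=0$ for all $D\in\cC_i(n)$.

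The main obstacle is to get rid of the rational factors. A rational function whose divisor has degree $0$ on each disc $D$ of $\cC_i(n)$ satisfies $|r(x)|=\text{const}$ and is a constant times a $1$-unit on the complement of these discs. So $r_{i,n}$ can be absorbed into $u_n$ after scaling, and we get $f=c_n u_n$ with $\|u_n-1\|_{U_{\underline{\cC(n)}}}<1$. Hence $|f|$ is constant on $\cH_{\underline{\cL}}$, and we may normalize $f(z_0)=1$ at a fixed $F$-rational base point, giving $\|f-1\|<1$ on every $U_{\underline{\cC(n)}}$.

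Next I take a logarithm. On each $U_{\underline{\cC(n)}}$ the function $g=\log f$ is analytic, with $\log$ converging since $\|f-1\|<1$. Thus $g\in\cO(\cH_{\underline{\cL}})$ and $g$ is bounded on each affinoid $U_{\underline{\cC(n)}}$ by $\sup|f-1|<1$. I then expand $g$ on the product of standard sets by a Mittag-Leffler decomposition in each variable. Boundedness by a fixed constant $<1$ uniformly in $n$, as the discs shrink toward $\cL_i$, forces all polar parts to vanish; this is the several-variable analogue of ``bounded analytic functions on $\cH_{\cL}$ are constant''. The conclusion is that $g$ is constant.

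There is one care point: $\log$ is not injective on $1$-units with torsion. Instead I will argue that $f$ itself is constant. For each fixed $(x_2,\ldots,x_s)$ the function $x_1\mapsto f$ lies in $\cO(\cH_{\cL_1})^\times$ with $\|f-1\|<1$, so it is constant by the remark after van der Put's theorem. Inducting on the variables then shows $f$ is constant. This avoids logarithms entirely, and I would actually use this slicing argument.

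\emph{Cohomology.} For the vanishing of $\HH^i$, take the Čech/Milnor sequence for the increasing admissible covering:
\[
0\to {\varprojlim_n}^1\, \HH^{i-1}(U_{\underline{\cC(n)}},\cO^\times)\to \HH^i(\cH_{\underline{\cL}},\cO^\times)\to \varprojlim_n \HH^i(U_{\underline{\cC(n)}},\cO^\times)\to 0.
\]
By Proposition \ref{pro: vdP} the right-hand term vanishes for $i\geq1$ and the $\varprojlim^1$ term vanishes for $i\geq2$. For $i=1$ it remains to show that $\varprojlim^1_n \cO(U_{\underline{\cC(n)}})^\times=0$. Using Corollary \ref{cor: functions}, this system is an extension of $F^\times\times\prod_i M(\cC_i(n),\Z)_0$, which has surjective transition maps and hence vanishing $\varprojlim^1$, by the system $\{u:\|u-1\|<1\}$. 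For the latter, Mittag-Leffler holds: restrictions have dense image because rational functions with poles in the removed discs are dense, and the $1$-unit group is complete. Therefore $\varprojlim^1=0$, and this density and completeness argument is the technical step I would verify most carefully.
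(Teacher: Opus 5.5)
\textbf{Overall.} Your surjectivity and kernel arguments follow the paper's proof. Both deduce them from van der Put's one-variable isomorphism and Corollary \ref{cor: functions}, and your slicing argument spells out the paper's ``from the one-dimensional case, $u$ is constant''. One small point: when $\cL_i=\PP^1(F)$ there is no $F$-rational base point. You do not need that normalization, though: each slice has zero van der Put measure and is therefore constant. The reduction of the cohomology statement to $\varprojlim^1_n\cO(U_n)^\times=0$ (with $U_n=U_{\underline{\cC(n)}}$) via Proposition \ref{pro: vdP} is fine. So is splitting off the discrete quotient, whose transition maps are surjective.

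\textbf{The gap.} It lies in your justification that $\varprojlim^1$ of the $1$-unit system vanishes. The restriction $1+\cO^{\circ\circ}(U_{n+1})\to 1+\cO^{\circ\circ}(U_n)$ does \emph{not} have dense image. Take $s=1$, $\cL=\{0\}$, $\pi\in F$ with $0<|\pi|<1$, $\cC(1)=\{D(0,1)\}$ and $\cC(2)=\{D(0,|\pi|^2)\}$.
\begin{itemize}
\item $u=1+\pi/x$ is a $1$-unit on $U_1=\{|x|\ge 1\}$.
\item Every $1$-unit $v=\sum_{k\ge0}d_kx^{-k}$ on $U_2=\{|x|\ge|\pi|^2\}$ has $|d_1|<|\pi|^2$.
\item Hence $\|u-v\|_{U_1}\ge|\pi-d_1|=|\pi|$. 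Indeed $u=(x+\pi)/x$ has a zero at $-\pi\in U_2$.
\end{itemize}
Runge-type density of rational functions is a statement about $\cO(U_{n+1})\to\cO(U_n)$. The approximants need not be $1$-units on the larger set $U_{n+1}$. So the Mittag-Leffler criterion you want to invoke is not available.

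\textbf{What actually holds.} The opposite phenomenon is true, and it is what makes the paper's argument work: modulo constants, restriction is \emph{contracting}. If $g=\sum_{k\ge1}c_k(x-a)^{-k}$ has sup norm $<1$ on $\PP^1\setminus D(a,\rho)$, then its sup norm on $\PP^1\setminus D(a,\rho')$ is $<\rho/\rho'$ for every $\rho'\ge\rho$.

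Apply the Mittag-Leffler decomposition in each variable. Choose the coverings so that the radii of the discs in $\cC_i(m)$ tend to $0$. Then a $1$-unit $u_m$ on $U_m$, after division by a suitable constant in $1+\m$, satisfies $\|u_m-1\|_{U_n}\to 0$ as $m\to\infty$ for fixed $n$. Hence $\prod_{m\ge n}u_m$ converges on $U_n$. These products give a preimage of $(u_n)_n$ under $(g_n)_n\mapsto(g_n/g_{n+1})_n$. The constants form a constant system, whose $\varprojlim^1$ vanishes.

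This is the paper's proof. It writes $f_n=r_nu_n$ with $r_n$ extending to a unit on all of $\cH_{\underline{\cL}}$, and builds the preimage from the convergent products $\prod_{m>n}u_m$ together with the global units $r_i$. You should replace your density argument by this convergence argument.
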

\begin{proof}
It follows from the one-dimensional case that \eqref{eq: prodmeasures} is surjective.
By Corollary \ref{cor: functions}, its kernel is given by functions $u$ that (up to multiplication with a constant) fulfil $||u-1||_{\cH_{\underline{\cL}}}< 1$.
From the one-dimensional case, one deduces that $u$ is constant.

Let $U_n\subseteq (\PP^1_F)^s$ be an increasing sequence of products of standard open subsets such that $\cH_{\underline{\cL}}=\bigcup_{n=1}^{\infty} U_{n}.$
By Proposition \ref{pro: vdP} it remains to show that the higher limit $\varprojlim^1_{n} \cO(U_n)^\times$ vanishes.
Let $(f_n)_n \in \prod_n \cO(U_n)^\times$ be an arbitrary sequence.
By Lemma \ref{cor: functions} we can write
\[
f_n=r_n\cdot u_n
\]
where $r_n$ extends to an invertible rigid analytic function on $\cH_{\underline{\cL}}$ and $||1-u_n||_{U_n}<1$.
Thus, the product $u'_n\coloneq\prod_{m>n}(u_m)$ converges to an invertible analytic function on $U_n$ and we define
\[
F_n\coloneq u_n'\cdot \prod_{i=1}^{n-1} r_{i}^{(-1)^{n-i}}.
\]
Then the sequence $(F_n)_n$ is a preimage of the sequence $(f_n)_n$ under the map
\[
\prod_{n=1}^{\infty}\cO(U_n)^\times\longrightarrow \prod_{n=1}^{\infty}\cO(U_n)^\times,\quad (g_n)_n\longmapsto (g_n/g_{n+1})_n,
\]
and, thus, the assertion follows.
\end{proof}

\begin{Rem}
The theorem above implies that every invertible rigid analytic function on $\cH_{\underline{\cL}}$ can be written as the product of functions on each factor and that this decomposition is unique up to multiplication with constants.
\end{Rem}

\subsection{Twisted diagonal divisors}\label{sec: divisors}

In this section, $F$ is assumed to be a local field.
Write $\ord_F\colon F^\times \onto \Z$ for the normalized valuation.
Let $\cT$ be the Bruhat--Tits tree of $G\coloneq \PGL_2(F)$ and $\red\colon\cH_F\to \cT$ the $G$-equivariant reduction map.
For a simplex $\sigma \in \cT$ put $\cU_\sigma\coloneq \red^{-1}(\sigma)\subseteq \cH_F$.
Given an element $A\in G$, we call 
\[
\Delta_A=\{(Az,z)\mid z\in\cH_F\}
\]
the \emph{(twisted) diagonal divisor} on $\cH_F^2$ attached to $A$.

The image of the diagonal divisor $\Delta_A$ under the reduction map is clearly given by the set
\[
\cT_A\coloneq\red(\Delta_A)=\{(A\sigma,\sigma)\mid \sigma\in \cT\}.
\]
\begin{Lem}
Let $A,B$ be elements of $G$.
Then $A=B$ if and only if $\cT_A=\cT_B$.
\end{Lem}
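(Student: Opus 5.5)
The direction $A=B\Rightarrow \cT_A=\cT_B$ is trivial, so the content is the converse. Suppose $\cT_A=\cT_B$. Since the second coordinate of an element of $\cT_A$ determines the first, the equality says that $A\sigma=B\sigma$ for every simplex $\sigma$ of $\cT$. Equivalently, $C\coloneq B^{-1}A\in G$ fixes every vertex (and edge) of the Bruhat--Tits tree. The plan is to show that the only element of $\PGL_2(F)$ acting trivially on $\cT$ is the identity. In other words, the action of $G$ on $\cT$ is faithful.

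First, since $C$ fixes the standard vertex $v_0=[\cR_F^2]$, it lies in the stabilizer $F^\times\GL_2(\cR_F)/F^\times$. So we may choose a representative $C\in\GL_2(\cR_F)$. Next, $C$ fixes all vertices in a neighborhood of $v_0$ of arbitrary radius. Vertices at distance $n$ from $v_0$ correspond to lattices $L$ with $\m^n\cR_F^2\subsetneq L\subseteq \cR_F^2$ and $L/\m^n\cR_F^2$ cyclic of order $q^n$. These are in bijection with lines in $(\cR_F/\m^n)^2$ spanned by unimodular vectors, i.e. with $\PP^1(\cR_F/\m^n)$.

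Therefore $C$ must act trivially on $\PP^1(\cR_F/\m^n)$ for every $n$. Testing this on the three points $[1:0]$, $[0:1]$ and $[1:1]$ shows that $C$ is congruent modulo $\m^n$ to a scalar matrix $\lambda_n I$. Letting $n\to\infty$ and using completeness of $\cR_F$, $C$ is a scalar, hence trivial in $\PGL_2(F)$. Thus $A=B$.

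An alternative, more geometric route is to use the ends of the tree. An automorphism fixing all vertices fixes every end, i.e. every point of $\PP^1(F)$. A Möbius transformation fixing three (indeed all) points of $\PP^1(F)$ is the identity. Here one must recall that the ends of $\cT$ are $G$-equivariantly identified with $\PP^1(F)$.

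The only step needing care is the identification of vertices at distance $n$ with $\PP^1(\cR_F/\m^n)$ compatibly with the action of $\GL_2(\cR_F)$. Alternatively, one can invoke the equivariant identification of the ends of $\cT$ with $\PP^1(F)$. Both are standard, so I expect no real obstacle.
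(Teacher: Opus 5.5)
Your proposal is correct and follows the paper's argument: $\cT_A=\cT_B$ gives $A\sigma=B\sigma$ for every simplex $\sigma$, and one concludes by faithfulness of the $G$-action on $\cT$. The paper simply asserts that faithfulness, while you also prove it, either via congruences modulo $\m^n$ on $\PP^1(\cR_F/\m^n)$ or via the ends of the tree. One small wording fix: ``$L/\m^n\cR_F^2$ cyclic of order $q^n$'' should read ``isomorphic to $\cR_F/\m^n$ as an $\cR_F$-module'', since $\cR_F/\m^n$ is in general not a cyclic group.
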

\begin{proof}
Suppose that $\cT_A=\cT_B$.
Then $(A\sigma,\sigma)=(B\sigma,\sigma)$ holds for all simplices $\sigma$ of $\cT$.
As the only element of $G$ that acts trivially on the Bruhat--Tits tree is the identity, the claim follows.
\end{proof}

\subsubsection{Locally finite divisors and locally finite functions.}
A \emph{locally finite diagonal divisor} on $\cH_F \times \cH_F$ is a formal (possibly infinite) linear combination
\[
\Delta=\sum_{A\in G} n_A(\Delta)\cdot \Delta_A,\quad n_A(\Delta)\in \Z,
\]
such that for every affinoid open $\cU\subseteq\cH_F^2$ the set
\begin{equation}\label{eq: supp}
\supp(\Delta\vert_{\cU})\coloneq\{A\in G\mid \Delta_A\cap \cU \neq \emptyset,\ n_A(\Delta)\neq 0 \}
\end{equation}
is finite.
It is enough to check finiteness of \eqref{eq: supp} for affinoids of the form $\cU_{(\sigma_1,\sigma_2)}\coloneq \cU_{\sigma_1}\times \cU_{\sigma_2}$ with $\sigma_1$ and $\sigma_2$ running through the set of simplices of $\cT$.
In fact, it is enough to consider the case that both, $\sigma_1$ and $\sigma_2$, are vertices.

Note that the intersection $\Delta_A\cap \cU_{(\sigma_1,\sigma_2)}$ is nonempty if and only if $(\sigma_1, \sigma_2)\in \cT_A$, which is equivalent to $A\sigma_2 =\sigma_1$.
Write $G_{\sigma}$ for the stabilizer of a simplex $\sigma$ of $\cT$ in $G$.
Given two simplices $\sigma_1,\sigma_2$ of $\cT$ of the same dimension, let $g_{\sigma_1,\sigma_2}\in G$ be any element with $g \sigma_1=\sigma_2$.
Then $\{A\in G\mid \Delta_A \cap \cU_{(\sigma_1,\sigma_2)} \neq \emptyset\}=g_{\sigma_1,\sigma_2} G_{\sigma_2}$ is a compact open subset of $G$.

Write $\Div_{\delta}^\dagger(\cH_F^2)$ for the group of locally finite diagonal divisors.
The action of $G^2$ on $\cH_F^2$ induces an action on $\Div_{\delta}^\dagger(\cH_F^2)$.
We may view a locally finite divisor $\Delta$ as a function on $G$ via the assignment
\[
 f_\Delta(A) \coloneq n_A(\Delta)\ \mbox{for}\ A\in G.
\]
The locally finiteness condition for $\Delta$ translates into a certain finiteness for the support of $f_\Delta$:
given a topological space $X$ and a commutative ring $R$ define
\[
\cF^\dagger(X,R)\coloneq \{f\colon X \to R \mid \#\supp(f\vert_U)< \infty\ \mbox{for all}\ U\subseteq X\ \mbox{compact}\}.
\]
Note that for any compact space $X$, $\cF^\dagger(X,R)$ is just the free $R$-module on $X$.
By the discussion above, we see that $f_\Delta$ is an element of $\cF^\dagger(G,\Z)$ and every $f\in \cF^\dagger(G,\Z)$ is equal to some $f_\Delta$.
Let $G^2$ act on $\cF^\dagger(G,\Z)$ via $((g_1,g_2).f)(g)=f(g_1^{-1}g g_2)$.
One immediately deduces the following:
\begin{Lem}
The map
\[
\Div_{\delta}^\dagger(\cH_F^2) \too \cF^\dagger(G,\Z),\quad \Delta \mapstoo f_\Delta,
\]
is a $\PGL_2(F)^2$-equivariant isomorphism.
\end{Lem}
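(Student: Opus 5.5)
The plan is to check directly the three properties of the map $\Delta\mapsto f_\Delta$: it is well defined with values in $\cF^\dagger(G,\Z)$, it is bijective, and it is equivariant. Throughout, a locally finite diagonal divisor is identified with its coefficient function $A\mapsto n_A(\Delta)$. This identification is legitimate because the preceding lemma shows that $A\mapsto \Delta_A$ is injective: $\Delta_A$ determines $\cT_A$, and $\cT_A$ determines $A$. Hence the formal sums are indexed faithfully by $G$, and the map is automatically an injective group homomorphism.

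\textbf{The support conditions match.} We must show that $\Delta$ is locally finite if and only if $f_\Delta$ has finite support on every compact $U\subseteq G$. As noted above, local finiteness only needs to be tested on the affinoids $\cU_{(\sigma_1,\sigma_2)}$ with $\sigma_1,\sigma_2$ vertices. For such an affinoid,
\[
\supp(\Delta\vert_{\cU_{(\sigma_1,\sigma_2)}})=\supp(f_\Delta)\cap g_{\sigma_2,\sigma_1}G_{\sigma_2},
\]
and $g_{\sigma_2,\sigma_1}G_{\sigma_2}$ is compact open. This gives the direction from $\cF^\dagger$ to locally finite. For the converse, note that the cosets $gG_{v_0}$, for a fixed vertex $v_0$, cover $G$ by open sets. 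A compact $U\subseteq G$ therefore lies in finitely many of them, and each such coset equals $\{A\mid A v_0=gv_0\}$, which is exactly of the form above with $\sigma_2=v_0$ and $\sigma_1=gv_0$. Hence $\supp(f_\Delta\vert_U)$ is finite. This also shows that every $f\in\cF^\dagger(G,\Z)$ comes from the locally finite divisor $\sum_A f(A)\Delta_A$, so the map is surjective.

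\textbf{Equivariance.} For $(g_1,g_2)\in G^2$ we compute
\[
(g_1,g_2)\Delta_A=\{(g_1Az,\,g_2z)\}=\{(g_1Ag_2^{-1}w,\,w)\}=\Delta_{g_1Ag_2^{-1}}.
\]
Therefore
\[
n_B\bigl((g_1,g_2)\Delta\bigr)=n_{g_1^{-1}Bg_2}(\Delta),
\]
which is precisely $\bigl((g_1,g_2).f_\Delta\bigr)(B)$ under the stated action. One should also note that $G^2$ preserves local finiteness, since it permutes the affinoids $\cU_{(\sigma_1,\sigma_2)}$.

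\textbf{Main obstacle.} The only genuinely non-formal point is the reduction of local finiteness to the affinoids $\cU_{(\sigma_1,\sigma_2)}$ with $\sigma_1,\sigma_2$ vertices, asserted just before the lemma. It holds because any affinoid in $\cH_F^2$ has image under reduction meeting only finitely many simplices, and each edge stratum $\cU_e$ lies in the closure of the union of the strata of its two endpoint vertices. Given that reduction, the rest is bookkeeping.
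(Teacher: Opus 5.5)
Your proof is correct and is essentially the paper's argument, which deduces the lemma immediately from the preceding observations that local finiteness may be tested on the affinoids $\cU_{(\sigma_1,\sigma_2)}$ and that the $A$ with $\Delta_A\cap\cU_{(\sigma_1,\sigma_2)}\neq\emptyset$ form the compact open coset $\{A\mid A\sigma_2=\sigma_1\}=g_{\sigma_2,\sigma_1}G_{\sigma_2}$; your indexing is the right one, and the paper's $g_{\sigma_1,\sigma_2}G_{\sigma_2}$ is a slip. Beyond that, you only add the covering of a compact set by cosets $gG_{v_0}$ and the computation $(g_1,g_2)\Delta_A=\Delta_{g_1Ag_2^{-1}}$, and one small correction is needed: your ``closure'' remark does not justify the reduction to vertices (points of the open annulus over an edge do not lie in the closure of the two vertex affinoids). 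The actual reason is combinatorial: $Ae_2=e_1$ forces $A$ to send an endpoint of $e_2$ to an endpoint of $e_1$, so every $A$ relevant to an edge pair is already relevant to one of finitely many vertex pairs.
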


Let $H$ be a locally profinite group, $h\in H$ an element, $K\subseteq H$ a compact open subgroup, and $R$ a commutative ring.
Put $K^h\coloneq h^{-1}Kh$.
Then, we have the following equality of function spaces
\[
\cF^\dagger(H,R)=\{f\colon H \to R \mid \#\supp(f\vert_{h'Kh})< \infty\ \mbox{for all}\ h'\in H\}.
\]
Restriction to $Kh$ gives a $K\times K^h$-equivariant surjection
\[
\cF^\dagger(H,R) \onto \cF^\dagger(Kh,R)
\]
which via Frobenius reciprocity yields a $H\times K^h$-equivariant homomorphism
\[
\cF^\dagger(H,R)\too \Coind_{K\times K^h}^{H\times K^h}(\cF^\dagger(Kh,R))
\]
that is easily seen to be an isomorphism.
\begin{Rem}
Since $K\times K^h$ acts transitively on $K$ and the stabilizer of $h\in K$ in $K\times K^h$ is given by the twisted diagonal $\delta(K)=\{(k,h^{-1}kh) \mid k\in K\}\subseteq K \times K$, there is a canonical isomorphism
\[
\cF^\dagger(Kh,R)\xlongrightarrow{\sim} \Ind_{\delta(K)}^{K\times K^h}(R)
\]
of $K\times K^h$-modules.
\end{Rem}
Applying the discussion above to the special case $H=G$ yields the following:
\begin{Lem}\label{lem: divisoriso}
Let $K\subseteq G$ be a compact open subgroup and $g\in G$.
There is a canonical isomorphism
\[
\Div_{\delta}^\dagger(\cH_F^2) \xlongrightarrow{\sim} \Coind_{K\times K^g}^{G\times K^g} (\Z[Kg])
\]
of $G\times K^g$-modules.
\end{Lem}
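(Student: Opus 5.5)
The plan is to compose the isomorphism of the previous lemma, $\Div_{\delta}^\dagger(\cH_F^2)\xrightarrow{\sim}\cF^\dagger(G,\Z)$, $\Delta\mapsto f_\Delta$, with the general Frobenius-reciprocity isomorphism for $\cF^\dagger(H,R)$ stated above, taking $H=G$, $R=\Z$, $h=g$. The only remaining point is to identify $\cF^\dagger(Kg,\Z)$ with $\Z[Kg]$. Since $Kg$ is compact, a function in $\cF^\dagger(Kg,\Z)$ has finite support, so $\cF^\dagger(Kg,\Z)$ is the free $\Z$-module on the set $Kg$. This gives an isomorphism of $K\times K^g$-modules
\[
\cF^\dagger(Kg,\Z)\xlongrightarrow{\sim}\Z[Kg], \qquad f \mapstoo \sum_{x\in Kg} f(x)\,x .
\]
Here $(k_1,k_2)$ acts on the right-hand side by $x\mapsto k_1xk_2^{-1}$, matching the action $((g_1,g_2).f)(x)=f(g_1^{-1}xg_2)$.

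The key steps, in order, are as follows.

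First, I check that restriction $\cF^\dagger(G,\Z)\to\cF^\dagger(Kg,\Z)$ is $K\times K^g$-equivariant. This holds because $Kg$ is stable under $x\mapsto k_1xk_2^{-1}$ for $k_1\in K$ and $k_2\in K^g=g^{-1}Kg$: indeed $Kg\,g^{-1}Kg=Kg$.

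Second, Frobenius reciprocity yields a $G\times K^g$-equivariant map
\[
\Phi\colon \cF^\dagger(G,\Z)\too \Coind_{K\times K^g}^{G\times K^g}(\cF^\dagger(Kg,\Z)),\qquad \Phi(f)(g_1,g_2)=\bigl((g_1,g_2)^{-1}f\bigr)|_{Kg}.
\]

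Third, I verify that $\Phi$ is bijective. A coinduced function is determined by its values on coset representatives $(g_1,1)$ of $(G\times K^g)/(K\times K^g)$, with $g_1$ running over $G/K$. The value at $(g_1,1)$ is the restriction of $f$ to $g_1Kg$. The sets $g_1Kg$ for $g_1\in G/K$ partition $G$, and each of them is compact open.
\begin{itemize}
\item Injectivity is then immediate.
\item For surjectivity, given compatible finite-support data on each translate $g_1Kg$, I glue them into a single function $f$ on $G$. This $f$ has finite support on every compact set, because a compact set meets only finitely many of the open cosets $g_1Kg$. So $f\in\cF^\dagger(G,\Z)$.
\end{itemize}

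The only mild obstacle is this gluing and the bookkeeping of the left/right conventions in the coinduction. In particular, the inversion convention for right modules fixed in the Notations must be applied consistently so that the actions match. No deeper input is needed.
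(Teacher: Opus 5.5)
Your proposal is correct and is essentially the paper's argument. The paper obtains the lemma by composing $\Delta\mapsto f_\Delta$ with the Frobenius-reciprocity isomorphism $\cF^\dagger(H,R)\cong\Coind_{K\times K^h}^{H\times K^h}(\cF^\dagger(Kh,R))$ for $H=G$, $h=g$, and then uses that $\cF^\dagger$ of a compact set is the free module on that set. Your check of bijectivity, via the partition of $G$ into the compact open sets $g_1Kg$, just fills in the step the paper calls ``easily seen''.
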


\subsubsection{Resolution via the Bruhat--Tits tree.}
In order to remedy the fact that we have to restrict to a compact open subgroup in the second variable, we replace $\Div_{\delta}^\dagger(\cH_F^2)$ by a two-step resolution coming from the Bruhat--Tits tree $\cT$ (cf.~\cite[Section II.2.8]{SerreTrees}).
Let $\cV$ be the set of vertices of $\cT$ and $\cE^{\orient}$ the set of oriented edges of $\cT$.
Given an oriented edge $e\in \cE^{\orient}$, write $s(e)$ and $t(e)$ for its source and target, respectively, and denote by $\bar{e}$ the edge with opposite orientation.
Consider the $G$-modules
\[
C_0^{\orient}(\cT)\coloneq \Z[\cV]\quad\mbox{and}\quad C_1^{\orient}(\cT)\coloneq \Z[\cE^{\orient}]/\Z[e+\bar{e}\mid e\in \cE^{or}]
\]
of oriented chains on $\cT$.
Since $\cT$ is a tree, the two homomorphisms
\begin{align*}
\partial\colon C_1^{\orient}(\cT)\too C_0^{\orient}(\cT),&\quad e \mapstoo t(e) - s(e),\\
\intertext{and}
\epsilon\colon C_0^{\orient}(\cT)\mapstoo \Z,&\quad \ v \mapstoo 1,
\end{align*}
give rise to the exact sequence
\begin{equation}\label{eq: treehomology}
0 \too C_1^{\orient}(\cT)\xlongrightarrow{\partial} C_0^{\orient}(\cT)\xlongrightarrow{\epsilon}\Z\too 0
\end{equation}
of $G$-modules.
Note that $G$ acts transitively on the sets $\cV$ and $\cE^{\orient}$.
Let $e$ be an oriented edge of $\cT$ with origin $v$.
As before, write $G_v$ for the stabilizer of $v$ and $G_e$ for the stabilizer of the underlying nonoriented edge.
Consider the character
\[
\chi_{\orient}\colon G \too \Z/2\Z,\quad g \mapstoo \ord_F(\det(g)).
\]
The assignments
\begin{align*}
\Ind_{G_v}^{G}(\Z)\xlongrightarrow{\sim} C_0^{\orient}(\cT),&\quad \sum_{g\in G} g\otimes n_g  \mapstoo \sum_{g\in G} n_g \cdot g.v,\\
\intertext{and}
\Ind_{G_e}^{G}(\chi_{\orient})\xlongrightarrow{\sim} C_1^{\orient}(\cT),&\quad \sum_{g\in G} g\otimes n_g  \mapstoo \sum_{g\in G} n_g \cdot g.e,
\end{align*}
define $G$-equivariant isomorphisms.
Applying the functor $\Hom_\Z(-,M)$ for a $G$-module $M$ to the short exact sequence \eqref{eq: treehomology} yields the short exact sequence
\[
0 \too M \xlongrightarrow{\epsilon^\ast} \Hom_\Z(C_0^{\orient}(\cT), M) \xlongrightarrow{\partial^\ast}\Hom_\Z(C_1^{\orient}(\cT), M)\too 0
\]
of $G$-modules.
Using the isomorphisms between spaces of oriented chains on $\cT$ and induced representations together with the identification \eqref{eq: indiso} the short exact sequence above becomes the short exact sequence
\begin{equation}\label{eq: BTcomplex}
0 \too M \too \Coind_{G_v}^{G} (M) \xlongrightarrow{\partial^\ast}\Coind_{G_e}^{G} (\chi_{\orient}\otimes M)\too 0.
\end{equation}
Consider the special case $M=\Div_{\delta}^\dagger(\cH_F^2)$ where $G$ acts via Möbius transformations on the second variable.
Since the action of $G$ on the first variable commutes with this $G$-action, the short exact sequence \eqref{eq: BTcomplex} is equivariant with respect to the $G\times G$-action.
Combining the discussion above with Lemma \ref{lem: divisoriso} implies the following:
\begin{Pro}\label{pro: divisorresolution}
The sequence
\[
0 \too \Div_{\delta}^\dagger(\cH_F^2) \too \Coind_{G_v^2}^{G^2}\left(\Z[G_v]\right)
\xlongrightarrow{\partial^\ast}
\Coind_{G_e^2}^{G^2} \left(\Z[G_e]\otimes \chi_{\orient}\right) \too 0
\]
of $G\times G$-modules is exact.
\end{Pro}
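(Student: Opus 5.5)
The plan is to take the short exact sequence \eqref{eq: BTcomplex} with $M=\Div_{\delta}^\dagger(\cH_F^2)$, on which the second copy of $G$ acts by Möbius transformations on the second variable and the first copy on the first. This gives an exact sequence
\[
0\too \Div_{\delta}^\dagger(\cH_F^2)\too \Coind_{G_v}^{G}(\Div_{\delta}^\dagger(\cH_F^2))\xlongrightarrow{\partial^\ast}\Coind_{G_e}^{G}(\chi_{\orient}\otimes \Div_{\delta}^\dagger(\cH_F^2)).
\]
Here the coinduction is taken in the second factor, and the first factor acts pointwise. Exactness and surjectivity of $\partial^\ast$ come for free: \eqref{eq: treehomology} is a sequence of free $\Z$-modules, so $\Hom_\Z(-,M)$ keeps it exact. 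Since the two actions commute, all maps are $G\times G$-equivariant. What remains is to identify the two outer terms with the modules in the statement.

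For the middle term, I would use transitivity of coinduction. View $\Coind_{G_v}^{G}(M)$, with $M$ a $G\times G_v$-module via restriction, as $\Coind_{G\times G_v}^{G\times G}(M)$. By Lemma \ref{lem: divisoriso} with $K=G_v$ and $g=1$, we have $M\cong \Coind_{G_v\times G_v}^{G\times G_v}(\Z[G_v])$ as $G\times G_v$-modules. Composing the two coinductions gives
\[
\Coind_{G\times G_v}^{G\times G}\Coind_{G_v\times G_v}^{G\times G_v}(\Z[G_v])\cong \Coind_{G_v^2}^{G^2}(\Z[G_v]).
\]
This requires checking that the isomorphism of Lemma \ref{lem: divisoriso} is compatible with the restricted action, which it is by construction: it comes from restricting functions in $\cF^\dagger(G,\Z)$ to $K$.

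For the edge term, I would repeat the argument with $K=G_e$ and $g=1$. The group $G_e$ is compact open (the stabilizer of the unoriented edge), and $G_e^g=G_e$. This gives $\chi_{\orient}\otimes M\cong \Coind_{G\times G_e}^{G\times G}$ of $\chi_{\orient}\otimes\Coind_{G_e\times G_e}^{G\times G_e}(\Z[G_e])$. The character $\chi_{\orient}$ only involves the second factor and is trivial on nothing in particular. To pull it inside the inner coinduction, I would use the projection formula $\chi\otimes\Coind(N)\cong\Coind(\chi|\otimes N)$, which holds because $\chi$ is a character of the ambient group. This yields $\Coind_{G_e^2}^{G^2}(\Z[G_e]\otimes\chi_{\orient})$, with $\chi_{\orient}$ viewed on the second factor $G_e$.

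The main obstacle is bookkeeping: making sure the actions on $\Z[G_v]$ and $\Z[G_e]$ are the ones intended. These are the left–right actions $(k_1,k_2)\cdot x=k_1xk_2^{-1}$, and they should match the $K\times K^g$-action on $\Z[Kg]$ in Lemma \ref{lem: divisoriso}. The identifications must also be compatible with the map $\partial^\ast$ via \eqref{eq: indiso}. I would verify this by writing elements as functions on $G^2$ and checking that the inclusion $\Div_\delta^\dagger\to$ middle term is $f\mapsto[(g_1,g_2)\mapsto (g_1,g_2)^{-1}f|_{G_v}]$, with the analogous formula for edges. Once these identifications are in place, the proposition follows.
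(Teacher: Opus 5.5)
Your proposal is correct and follows the paper's own argument. You apply \eqref{eq: BTcomplex} to $M=\Div_{\delta}^\dagger(\cH_F^2)$ with $G$ acting on the second variable, note that the commuting action on the first variable makes everything $G\times G$-equivariant, and identify the outer terms via Lemma \ref{lem: divisoriso} (with $K=G_v$, respectively $K=G_e$, and $g=1$). Your extra bookkeeping, namely transitivity of coinduction and the projection formula for $\chi_{\orient}$, just makes explicit what the paper leaves to the reader. One small slip: the left-hand side of your edge identification should read $\Coind_{G_e}^{G}(\chi_{\orient}\otimes M)$, not $\chi_{\orient}\otimes M$.
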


\subsubsection{Parity decomposition.}\label{sec: SL2}
Let us give a variant of the above resolution, in which we replace $G$ by the subgroup $G_+\coloneq\ker(\chi_{\orient})$ of even elements.
Its complement $G_-\coloneq G\setminus G_+$ is the set of odd elements.
We say that two vertices $v_1, v_2$ of $\cT$ have the same parity if their distance in $\cT$ is even.
Parity of vertices is preserved by the action of $G_+$.
Moreover, $G^+$ acts transitively on the set of vertices having the same parity.
If $A$ is even, then $v_1$ and $v_2$ have the same parity for every $(v_1,v_2)\in\cT_A$.
If $A$ is odd, $v_1$ and $v_2$ have the opposite parity for every $(v_1,v_2)\in\cT_A$.
The decomposition
\[
\Div^\dagger_\delta(\cH_F^2)=\Div^\dagger_\delta(\cH_F^2)^{+} \oplus \Div^\dagger_\delta(\cH_F^2)^{-}
\]
where
\[
\Div^\dagger_\delta(\cH_F^2)^{\pm}\coloneq \{\Delta \in \Div^\dagger_\delta(\cH_F^2) \mid n_A(\Delta)= 0\ \forall A\in G_{\mp}\}
\]
is stable for the subgroup of elements $(g_1,g_2)\in G \times G$ such that $\chi_{\orient}(g_1)=\chi_{\orient}(g_2)$.
We may decompose the resolution of Proposition \ref{pro: divisorresolution} accordingly:
let $v_0,v_1$ be two adjacent vertices of $\cT$ connected by the nonoriented edge $e$.
Write $K_0$, $K_1$ for the stabilizers of $v_0$ and $v_1$ in $G_+$, respectively, and $I_e$ for the stabilizer of $e$ in $G_+$.
Pick $g\in G_e$ with $\chi_{\orient}(g)=-1$.
Then there are $G_+\times G_+$-equivariant exact sequences
\begin{align*}
0 \too \Div^\dagger_\delta(\cH_F^2)^{+}\too \bigoplus_{i=0}^{1} \Coind_{K_i^2}^{G_+^2} \left(\Z[K_i]\right) \too  \Coind_{I^2}^{G_+^2} \left(\Z[I]\right) \too 0 
\end{align*}
and
\begin{align*}
0 \too \Div^\dagger_\delta(\cH_F^2)^{-}\too \bigoplus_{i=0}^{1} \Coind_{K_i\times K_{1-i}}^{G_+^2} \left(\Z[K_i g]\right) \too  \Coind_{I^2}^{G_+^2} \left(\Z[Ig]\right) \too 0 .
\end{align*}
The subgroup $\PSL_2(F)\subseteq G^+$ also acts transitively on the set of vertices of given parity.
Thus, one may replace $G_+$ by $\PSL_2(F)$ in the discussion above.

\subsection{Symmetric spaces}\label{sec: spaces}
We recall the definition of non-Archimedean symmetric spaces attached to orthogonal groups of Darmon--Gehrmann--Lipnowski.
In \cite{DGL}, the base field is assumed to be $\Q_p$, $p\neq 2$, and the quadratic space over $\Q_p$ is assumed to have a self-dual $\Z_p$-lattice.
We allow more general base fields but all the basic statements made in \textit{loc.cit.}~hold verbatim in this greater generality.
More precisely, we assume that $F$ is a local field of characteristic $\mathrm{char}(F)\neq 2$ and $(V,q)$ is a nondegenerate finite-dimensional quadratic space over $F$ of dimension $n\geq 3$ that is not anisotropic.
Let $\langle\cdot,\cdot\rangle$ be the bilinear form attached to $q$, that is, $2q(v)=\langle v,v\rangle$.
Given a vector $v\in V$ we write $f_v$ for the linear form on $V$ given by $f_v(w)=\langle v,w\rangle$.
Consider the smooth quadric $Q_V\subseteq \PP_V$ cut out by the equation $q=0$.
For a line $\ell=[v]\in \PP_V(F)$ let $H_v\subseteq \PP_V$ be the hyperplane cut out by $f_v=0$.
Our assumption on $V$ implies that $Q_V(F)\neq \emptyset$.
As explained in \cite[Section 1.3]{DGL},
\[
X_V\coloneq Q_V \setminus \bigcup_{\ell\in Q_V(F)} H_\ell
\]
naturally carries the structure of a connected rigid analytic subspace of $Q_V$.
Moreover, $Q_V$ is a Stein space.
The orthogonal group $\Ort(V)$ of $V$ acts on $X_V$.
Note that multiplying the quadratic form with a nonzero scalar does not change the space $X_V$.

For $\ell\in \PP_V(F)\setminus Q_V(F)$ we consider the divisor $\Delta_{\ell}=H_\ell\cap X_V$ on $X_V$. 
If $n\neq 4$, $\Delta_{\ell}$ is a prime divisor and $\Delta_\ell =\Delta_{\ell'}$ if and only if $\ell=\ell'$.
In case $n=3$, $\Delta_{\ell}$ is nonempty if and only if the orthogonal complement of $\ell$ is not a hyperbolic plane.
In that case $\Delta_{\ell}$ consists of two points defined over a quadratic extension of $F$.
Moreover, $\Delta_{\ell}\cap \Delta_{\ell'}=\emptyset$ if $\ell\neq \ell'$.
Put $\cN_V\coloneq \PP_V(F)\setminus Q_V(F)$ in case $n \geq 4$ and, if $n=3$, let $\cN_V\subseteq \PP_V(F)\setminus Q_V(F)$ be the subset of lines whose orthogonal complement is not a hyperbolic plane.
By definition, $\cN_V$ is stable under the $\Ort(V)$-action on $\PP_V(F)$.

A \emph{locally finite special divisor} on $X_V$ is a formal (possibly infinite) linear combination
\[
\Delta=\sum_{\ell\in \cN_V} n_\ell(\Delta)\cdot \Delta_\ell,\quad n_\ell(\Delta)\in \Z,
\]
such that for every affinoid open subset $\cU\subseteq X_V$ the set
\[
\supp(\Delta\vert_{\cU})\coloneq\{\ell\in \cN_V\mid \Delta_\ell\cap \cU \neq \emptyset,\ n_\ell(\Delta)\neq 0 \}
\]
is finite.
Write $\Div_{\spe}^\dagger(X_V)$ for the group of locally finite special divisors on $X_V$.
The action of $\Ort(V)$ on $X_V$ induces an action on $\Div_{\spe}^\dagger(X_V)$.
\begin{Rem}\label{rem: isotropy}
Similarly, as in the case of twisted diagonal divisors we can view a locally finite special divisor as a function from $\cN_V$ to $\Z$.
The calculations in \cite[Section 2.3]{DGL} show that this defines a $\Ort(V)$-equivariant isomorphism between $\Div_{\spe}^\dagger(X_V)$ and $\cF^\dagger(\cN_V,\Z)$, where $\Ort(V)$ acts on $\cF^\dagger(\cN_V,\Z)$ via left translation under the assumption that $V$ admits a self-dual $\cR_F$-lattice.
More precisely, let $\Lambda\subseteq V$ be any $\cR_F$-lattice.
For $\ell\in \cN_V$ fix a generator $v_\ell$ that is a primitive vector of $\Lambda$ and put
\[
\iso_\Lambda(\ell)\coloneq|(q(v_\ell)|\in |F^\times|.
\] 
The sets $\cN_{\Lambda,r}\coloneq \{\ell \in \cN_V\ \vert \iso_{\Lambda}(\ell)=r\}$ for $r\in |F^\times|$ form a cover of $\cN_V$ by compact open sets.
In \textit{loc.cit.} it is shown that, if $\Lambda$ is self-dual, then a formal special divisor $\Delta$ is locally finite if and only if the set $\{\ell \in \cN_{\Lambda,r}\mid n_\ell(\Delta) \neq 0\}$ is finite for every $r\in |F^\times|$.
In the following, we will show that $\Div_{\spe}^\dagger(X_V)$ and $\cF^\dagger(\cN_V,\Z)$ are isomorphic for all nonanisotropic four-dimensional quadratic spaces without the assumption on the existence of a self-dual lattice by considering explicit models for the quadratic space $V$.
\end{Rem}

We will give more concrete descriptions of these notions in the case of four-dimensional quadratic spaces and link them to the products of Drinfeld upper half-planes studied in the previous sections.
In particular, we will show that
\[
\HH^1(X_V,\cO^\times)=0
\]
for every four-dimensional quadratic space $V$ that is not anisotropic.

\subsubsection{The split case.}
Let us first consider the case that $V$ is the direct sum of two hyperbolic planes or, in other words, $V=\Mat_2(F)$ is the space of $2 \times 2$-matrices over $F$ with $q=\det$.
The corresponding bilinear form is given by $\langle M, N \rangle = \Tr(M \adj(N))$ where $\adj(N)$ denotes the adjunct matrix of $N$.
The orthogonal group of $V$ can be described explicitly in this case:
the group of matrices $\{(A_1,A_2\in \GL_2(F)^2 \ \vert \det(A_1)=\det(A_2)\}$ acts on $V$ via $(A_1,A_2)M=A_1 M A_2^{-1}$.
This action defines an element of $\SO(V)$ and every orthogonal transformation of $V$ of determinant one is of this form.
The subgroup of pairs of matrices acting trivially on $V$ is given by $Z=\{(\lambda,\lambda)\mid \lambda\in F^\times\}$.
Finally, the homomorphism sending a matrix to its adjunct matrix defines an element of $\Ort(V)\setminus \SO(V)$.

The quadric $Q_V$ is given by $2\times 2$-matrices of rank $1$ up to scaling.
Sending the line $[M]$ spanned by such a matrix to its kernel and image yields an isomorphism
\begin{equation}\label{eq: isom Q}
Q_V \xlongrightarrow{\sim} \PP^1_F \times \PP^1_F,\quad [M] \mapstoo (\im(M),\Ker(M)).
\end{equation}
The isomorphism is compatible with the $\SO(V)$-action, where a pair of matrices $(A_1,A_2)$ acts on $\PP^1_F\times \PP^1_F$ via $(A_1,A_2)(\ell_1,\ell_2)=(A_1 \ell_1,A_2 \ell_2)$.
Moreover, sending a matrix to its adjunct induces switching the two factors of $\PP^1_F\times \PP^1_F$.
Let $\ell$ be an element of $Q_V$ corresponding to the point $(x,y)\in \PP^1(F)\times \PP^1(F)$.
Then the image of the intersection $H_\ell\cap Q_V$ under \eqref{eq: isom Q} is equal to the closed subset $\{(x',y')\in \PP^1_F \times \PP^1_F\mid x=x'\mbox{ or } y=y'\}$.
Indeed, if $[M]\in H_\ell\cap Q_V$, then $M+v$ is isotropic.
We may assume that $M+v \neq 0$.
Then, both $\ker(M+v)$ and $\im(M+v)$ are $1$-dimensional.
Therefore, if $\ker(M)\neq \ker(v)$, then $\im(M),\im(v)\subset \im(M+v)$, which implies that $\im(M)=\im(v)$.
Similarly, if $\im(M)\neq \im(v)$, then $\ker(M)=\ker(v)$.
Thus, \eqref{eq: isom Q} induces an isomorphism
\begin{equation}\label{eq: isom H}
X_V\xlongrightarrow{\sim} \cH_F \times \cH_F.
\end{equation}
Theorem \ref{thm: functions} combined with \eqref{eq: isom H} yields an $\Ort(V)$-equivariant isomorphism
\[
\cO(X_V)^\times\hspace{-0.2em}/F^\times\cong M(\PP^1(F),\Z)_0 \oplus M(\PP^1(F),\Z)_0.
\]
Moreover, $\HH^i(X_V, \cO^\times)=0$ for all $i\geq 1$.
Under the isomorphism \eqref{eq: isom H}, the divisor $\Delta_A$ for $A\in \cN_V=\PGL_2(F)$ corresponds to the twisted diagonal divisor attached to $A$.
In particular, it induces an $\Ort(V)$-equivariant isomorphism
\[
\Div_{\spe}^\dagger(X_V) \xlongrightarrow{\sim}\Div_{\delta}^\dagger(\cH_F \times \cH_F).
\]

\subsubsection{The nonsplit case.}
Now let us assume that $V$ is not the direct sum of two hyperbolic planes.
Recall that we always assume that $V$ is not anisotropic.
Thus, $V$ is isomorphic (up to multiplying the quadratic form with a nonzero scalar) to the direct sum of a hyperbolic plane and the norm form of a quadratic extension $E/F$ that is uniquely determined by $V$.
In other words, $V$ is isomorphic to the space of matrices $\{M \in \Mat_2(E) \mid M' =-\adj(M)\}$, where $M'$ denotes the Galois conjugate of $M$ over $F$, with the quadratic form given by the determinant.
The group $\{A\in \GL_2(E)\mid \det(A)\in F^\times\}$ acts on $V$ via $A.M=A M (A')^{-1}$.
This action defines an element of $\SO(V)$ and every orthogonal transformation of $V$ of determinant one is of this form.
The subgroup of matrices acting trivially on $V$ is given by the scalar matrices corresponding to elements in $F^\times$.
The canonical map $V_E\coloneq V \otimes E \to \Mat_{2}(E)$ is an isomorphism and, thus, we may identify the quadric $Q_{V_E}$ with a product of two projective lines over $E$.
The nontrivial element of the Galois group of $E/F$ acts via $(x,y)\mapsto (y',x')$.
In particular, $Q_V(F)$ is identified with the set $\{(x,y)\in \PP^1(E)\times \PP^1(E)\ \vert \ x=y' \}$.
It follows that the base change $(X_V)_E$ of $X_V$ to $E$ (as a rigid analytic space over $E$) is isomorphic to the product of two upper half-planes over $E$:
\[
(X_V)_E \xlongrightarrow{\sim} \cH_E \times \cH_E.
\]
Moreover, the $\Gal(E/F)$-module $\cO((X_V)_E)^\times\hspace{-0.2em}/E^\times=M(\PP^1(E))_0 \oplus M(\PP^1(E))_0$ is an induced module.
In particular, the higher cohomology groups of $\Gal(E/F)$ with values in $\cO((X_V)_E)^\times\hspace{-0.2em}/E^\times$ vanish.
Thus, using Hilbert 90 one deduces
\[
\HH^1(\Gal(E/F),\cO((X_V)_E)^\times)=0,
\]
which implies that
\[
\cO(X_V)^\times\hspace{-0.2em}/F^\times\cong M(\PP^1(E),\Z)_0
\]
as $\SO(V)$-modules, where a matrix $A\in \GL_2(E)$ acts via Möbius transformations on the right hand side.
Moreover, since $\HH^1(X_V, \cO^\times)=\HH^1_{\mathrm{et}}(X_V, \cO^\times)$ by \cite[Theorem 8.2.3]{FvdP}, we may apply the Lyndon--Hochschild--Serre spectral sequence (see \cite[Proposition 2.6.12]{Huber}) to compute
\[
\HH^1(X_V,\cO^\times)=0.
\]
Finally, the divisor $\Delta_A$ attached to an element $A\in \cN_V\subseteq \PGL_2(E)$ matches the corresponding twisted diagonal divisor.
One readily checks that the spaces $\Div_{\spe}^\dagger(X_V)$ and $\cF^\dagger(\cN_V,\Z)$ are isomorphic.

\begin{Rem}\label{rem: flawless}
The calculations above show that in all cases $\cO(X_V)^\times/F^\times$ is the $\Z$-linear dual of a flawless $\Ort(V)$-representation in the sense of \cite[Definition 2.1]{Gehrmannautomorphic}.
In the split case, we may decompose $\cO(X_V)^\times/\times\cong M(\PP^1(F))_0 \oplus M(\PP^1(F))_0$.
On the first summand $G\times G$ acts via projection onto the first coordinate.
The standard identification of measures on $\PP^1(F)$ of total mass $0$ with harmonic cochains on the Bruhat--Tits tree (see, for example, \cite[Section 3.2.1]{plectic}) gives a short exact sequence of $G\times G$-modules of the form
\[
0 \too M(\PP^1(F))_0\too \Coind_{G_e\times G}^{G\times G}(\chi_{\orient}) \too \Coind_{G_v\times G}^{G\times G} (\Z)\too 0.
\]
Likewise, we get a resolution of the second summand.
In the nonsplit case, the action of $\SO(V)$ on $X_V$ can be extended to an action of $G_E\coloneq \PGL_2(E)$.
Writing $G_{E,v}, G_{E,e}\subseteq G_E$ for the stabilizer of a vertex respectively nonoriented edge in the Bruhat--Tits tree of $\PGL_2(E)$, we get the short exact sequence
\[
0 \too \cO(X_V)^\times/F^\times \too \Coind_{G_{E,e}}^{G_E}(\chi_{\orient}) \too \Coind_{G_{E,v}}^{G_E} (\Z)\too 0.
\]
of $G_E$-modules.
\end{Rem}


\section{Rigid meromorphic cocycles}
Fix a prime $p$ and a four-dimensional nondegenerate rational quadratic space $(V,q)$ such that 
\begin{itemize}
\item the real quadratic space $V_\infty\coloneq V\otimes \R$ has signature $(r,s)$ with $r \geq s$ and
\item $V_p\coloneq V\otimes \Q_p$ is not anisotropic.
\end{itemize}
Consider the $p$-adic symmetric space $X_p\coloneq X_{V_p}$ and abbreviate $\cA\coloneq \cO(X_p)$.
Let $\cM$ be the field of rigid meromorphic functions on $X_p$, that is, the fraction field of $\cA$.
Finally, let $\Div^\dagger(X_p)$ be the space of locally finite divisors on $X_p$.
Sending a nonzero rigid meromorphic function to its divisor defines an $\Ort(V_p)$-equivariant homomorphism
\[
\div\colon \cM^\times \too \Div^\dagger(X_p).
\]
The Picard group of $X_p$ vanishes by the results of Section \ref{sec: spaces} and, therefore, the sequence
\[
0\too \cA^\times \too \cM^\times \xlongrightarrow{\div} \Div^\dagger(X_p) \too 0
\]
of $\Ort(V_p)$-modules is exact.
Consider the set $\cP_V$ of positive definite lines in $V$, which we view as a subset of the local set $\cN_{V_p}$.
Given $\ell\in \cP_V$ we put $\Delta_{\ell,p}\coloneq \Delta_{\ell}\subseteq X_p$ to emphasize that this is a divisor on the symmetric space at $p$.
A locally finite special divisor $D\in \Div_{\spe}^\dagger(X_p)$ is called \emph{locally finite rational quadratic} if it supported on $\Delta_{\ell,p}$ with $\ell \in \cP_V$.
Write $\Div_{\ratq}^\dagger(X_p)$ for the space of locally finite rational quadratic divisors and $\cM^\times_{\ratq}$ for the preimage of $\Div_{\ratq}^\dagger(X_p)$ under the divisor map.
It follows that the sequence
\begin{align}\label{eq: divexact}
0\too \cA^\times \too \cM^\times_{\ratq} \xlongrightarrow{\div} \Div_{\ratq}^\dagger(X_p) \too 0
\end{align}
of $\Ort(V)$-modules is exact.
Note that the exactness of the sequence above was proven in \cite[Proposition 3.2]{DGL} for quadratic spaces of arbitrary dimension under the assumption that the quadratic space admits a self-dual $\Z_p$-lattice.
Given a subset $A\subseteq \cP_V$ we define
\[
\cF^\dagger_p(A,\Z)\coloneq \{f\in \cF^\dagger(\cN_{V_p},\Z) \mid \supp(f)\subseteq A\}.
\]
By the calculations of Section \ref{sec: spaces}, the $\Ort(V)$-module $\Div_{\ratq}^\dagger(X_p)$ is canonically isomorphic to the space
$\cF^\dagger_p(\cP_V,\Z)$.

For the remainder of the article, we fix a $p$-arithmetic subgroup $\Gamma\subseteq \SO(V)$.
In Section 4.2.4 of \cite{DGL}, it is explained how elements of $\HH^s(\Gamma, \cM^\times_{\ratq})$ can be evaluated a \emph{special points} of $X_p$.
Moreover, the special values for certain classes in $\HH^s(\Gamma, \cM^\times_{\ratq})$ are conjectured to be algebraic.
These classes, which are called rigid meromorphic cocycles in \textit{loc.cit.}, are characterized by their image in $\HH^s(\Gamma, \Div^\dagger_{\ratq}(X_p))$ under the divisor map.
In view of the long exact sequence of $\Gamma$-cohomology groups associated to the short exact sequence \eqref{eq: divexact}, one needs to get a handle on the cohomology groups $\HH^s(\Gamma, \Div^\dagger_{\ratq}(X_p))$, $\HH^{s}(\Gamma, \cA^\times)$ and $\HH^{s+1}(\Gamma, \cA^\times)$ in order to describe the space of all rigid meromorphic cocycles.
By the results of Section \ref{sec: spaces} the description of these spaces will depend very much on whether $V_p$ is the direct sum of two hyperbolic planes or not.
In the first case we say $V$ is \emph{split at $p$}, in the letter case we say $V$ is \emph{nonsplit at $p$}.

\subsection{Analytic (theta) cocycles}
Let us start by giving some qualitative statements about the cohomology with analytic coefficients.
In view of the short exact sequence
\[
0 \too \Q_p^\times \too \cA^\times \too \cA^\times/\Q_p^\times \too 0,
\]
one first has to consider the cohomology groups with trivial coefficients and with coefficients in $\cA^\times/\Q_p^\times$.
Since $p$-arithmetic groups are of type (VFL), it follows that the $\HH^i(\Gamma,\Z)$ is finitely generated and that the canonical homomorphisms
\[
\HH^i(\Gamma,\Z) \otimes \Q_p^\times \too \HH^i(\Gamma,\Q_p^\times)
\]
is injective and has finite cokernel for every $i \geq 0$.
Moreover, the base change map $\HH^i(\Gamma,\Z)\otimes \C \to \HH^i(\Gamma,\C)$ is an isomorphism and the latter group is computed in \cite{BFG}.
\begin{Pro}\label{pro: trivialcohomology}
The cohomology group $\HH^s(\Gamma,\Q_p^\times)$ is finite if $s=1$.
The cohomology group $\HH^{s+1}(\Gamma,\Q_p^\times)$ is finite if $V$ is split at $p$.
\end{Pro}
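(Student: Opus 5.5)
By the discussion preceding the statement, $\HH^i(\Gamma,\Z)\otimes\Q_p^\times\to\HH^i(\Gamma,\Q_p^\times)$ is injective with finite cokernel. Moreover, $\Q_p^\times\cong \Z\times\Z_p^\times$, and $\Z_p^\times$ is the product of a finite group and a pro-$p$ group isomorphic to $\Z_p$. Hence $\HH^i(\Gamma,\Q_p^\times)$ is finite as soon as $\HH^i(\Gamma,\Z)$ is finite. Since $\HH^i(\Gamma,\Z)$ is finitely generated, this is in turn equivalent to the vanishing of $\HH^i(\Gamma,\C)$. So the plan is to prove
\[
\HH^s(\Gamma,\C)=0 \ \text{if } s=1,\qquad \HH^{s+1}(\Gamma,\C)=0 \ \text{if } V \text{ is split at } p,
\]
using the computation of Borel--Garland / Blasius--Franke--Grunewald \cite{BFG} of the cohomology of $S$-arithmetic groups, with $S=\{\infty,p\}$.

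The key input is that $\HH^\ast(\Gamma,\C)$ decomposes as a sum over automorphic representations $\pi=\pi_\infty\otimes\pi_p\otimes\pi^{\infty,p}$ of $\SO(V)$ (or $\Spin(V)$). Each $\pi$ contributes $(\mathfrak g,K_\infty)$-cohomology of $\pi_\infty$ tensored with continuous cohomology of $\pi_p$. We may pass to a finite-index subgroup, since restriction is injective on rational cohomology. Contributions from nontrivial $\pi_p$ come only from the Steinberg-type representations, which contribute in degree equal to the $p$-adic rank. The trivial representation contributes only in low degrees, through invariant forms. The local $p$-adic group $\SO(V_p)$ has rank $2$ in the split case (isogenous to $\SL_2\times\SL_2$) and rank $1$ in the nonsplit case.

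Case $s=1$: here $r=3$, so $\SO(V_\infty)\cong\SO(3,1)$, locally $\SL_2(\C)$, whose symmetric space is hyperbolic $3$-space. I will show that $\HH^1(\Gamma,\C)=0$. For the trivial representation this is clear, since $\HH^1$ of the compact dual vanishes. For nontrivial $\pi$, the degree-one contribution would require $\pi_p$ to contribute in degree $0$, which forces $\pi_p$ to be trivial. By strong approximation (using that $\Spin(V)$ is simply connected and $V_p$ is isotropic) $\pi$ would then be one-dimensional. Alternatively, Margulis normal subgroup/superrigidity applies: if $V$ is split at $p$, $\Gamma$ is an irreducible lattice in a product of rank $\geq 2$, so $\HH^1(\Gamma,\C)=0$ by Margulis. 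If $V$ is nonsplit at $p$, the total rank is $1+1=2$, and the same conclusion holds because $\Gamma$ is an irreducible lattice in $\SO(3,1)\times\SO(V_p)$.

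Case split at $p$, degree $s+1$: the $p$-adic rank is $2$. By \cite{BFG}, nontrivial cuspidal/cohomological contributions with infinite-dimensional $\pi_p$ live in degrees $\geq$ (Archimedean degree of cuspidal contribution) $+2$. For $s=0$ (compact at infinity), $\HH^{1}(\Gamma,\C)$ vanishes by Margulis (rank $2$). For $s=1$, $\HH^{2}$ could only come from $\pi_\infty$ contributing in degree $0$, hence trivial at infinity, hence one-dimensional; or from $\pi_p$ Steinberg on one factor of $\SL_2\times\SL_2$. The main obstacle is excluding these mixed contributions, where $\pi_p=\mathrm{St}\otimes\mathbf 1$. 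I would exclude them using strong approximation for $\Spin(V)$: a global $\pi$ that is trivial on one simple factor $\SL_2(\Q_p)$ of $\Spin(V_p)$ must be one-dimensional. Hence only the trivial representation contributes. Its contribution is $\HH^\ast$ of the compact duals, which is concentrated in even degrees at infinity with trivial $p$-part in degree $0$. Since $s+1\leq 2$ for $s\leq1$, the remaining degree-$2$ class from the trivial representation must also be ruled out. For $s=1$, $\SO(3,1)$ has compact dual $S^3$ with no $\HH^2$, so it does not occur. For $s=2$, $\SO(2,2)$ has compact dual $S^2\times S^2$, whose $\HH^2$ is nonzero, so here I need a finer argument. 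I expect this $s=2$ case to be the delicate point and will check it against the precise statement in \cite{BFG}.
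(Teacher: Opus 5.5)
Your reduction to the vanishing of $\HH^s(\Gamma,\C)$ resp.\ $\HH^{s+1}(\Gamma,\C)$ is correct. So is your proof of the first assertion: for $s=1$ the discriminant of $V$ is negative, so $\SO(V)$ is almost $\Q$-simple, $\Gamma$ is an irreducible lattice of total rank $\geq 2$, and Margulis gives $\HH^1(\Gamma,\C)=0$.

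The second assertion, however, is not established.

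\textbf{The case $s=2$.} You explicitly stop short of a proof here, and the obstacle you name is the wrong one. The relevant degree is $s+1=3$, not $2$. Since $\HH^3(S^2\times S^2)=0$, the trivial representation contributes nothing. What must actually be ruled out are the bidegrees
\begin{itemize}
\item $(a,b)=(3,0)$;
\item $(2,1)$, with $\pi_p=\mathrm{St}\otimes\mathbf{1}$;
\item $(1,2)$, with $\pi_p=\mathrm{St}\otimes\mathrm{St}$ and $\pi_\infty=D\otimes\mathbf{1}$ or $\mathbf{1}\otimes D$. These are the only representations of $\SL_2(\R)^2$ with $(\mathfrak g,K)$-cohomology in degree $1$.
\end{itemize}
If $V$ is $\Q$-isotropic you also need BFG's control of the Eisenstein part, not just a discrete decomposition into automorphic $\pi$.

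\textbf{The non-simple case.} More seriously, your tool for killing mixed contributions fails in general. The claim that an automorphic $\pi$ trivial on one simple factor of $\Spin(V_p)$ must be one-dimensional is strong approximation for a $\Q$-simple group. It holds only when $\mathrm{disc}(V)$ is not a square. For $s\in\{0,2\}$ the discriminant may be a square. Then $\Spin(V)\cong B^1\times B^1$ for a quaternion algebra $B/\Q$ split at $p$, and $\Gamma$ is commensurable with a product $\Gamma_1\times\Gamma_2$ of $p$-arithmetic subgroups of $B^1$. In that case representations $\pi_1\otimes\mathbf{1}$ genuinely exist.

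This is exactly why the paper splits the argument. It uses BFG when $\SO(V)$ is almost $\Q$-simple; there your strong-approximation reasoning does work, applied at a place above $p$ and at a real place. Otherwise it uses Künneth together with the cohomology of $p$-arithmetic subgroups of quaternion groups.

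For $s=2$ the product case goes through. $\HH^3(\Gamma_1\times\Gamma_2,\C)$ involves only $\HH^1(\Gamma_i,\C)$ and $\HH^3(\Gamma_i,\C)$, and both vanish:
\begin{itemize}
\item $\HH^1(\Gamma_i,\C)=0$ by Margulis, since $\Gamma_i$ is an irreducible lattice in $\SL_2(\R)\times\SL_2(\Q_p)$;
\item $\HH^3(\Gamma_i,\C)=0$ by a degree count.
\end{itemize}

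For $s=0$ your appeal to Margulis has the same defect, since it needs $\Gamma$ to be an irreducible lattice. Here it cannot be repaired. The $\Gamma_i$ are then cocompact lattices in $\SL_2(\Q_p)$, hence virtually free of rank $\geq 2$. So $\HH^1(\Gamma,\C)\neq 0$ for suitable small $\Gamma$ (e.g.\ torsion-free products). The second assertion therefore has to be read with $s\geq 1$, or with $\SO(V)$ almost $\Q$-simple.
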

\begin{proof}
In case $\SO(V)$ is almost $\Q$-simple, this is a direct consequence of the main result of \cite{BFG} (see \cite[Proposition 4.5]{DGL} for more details).
If not, one can apply the Künneth theorem to reduce to the calculation of the cohomology of $p$-arithmetic subgroups of quaternion groups (cf.~\cite[Lemma 3.9]{Sprehe}). 
\end{proof}
Note that in case $V$ is nonsplit at $p$, the cohomology groups $\HH^{s+1}(\Gamma,\Q_p^\times)$ are in general nontorsion and related to automorphic representations of trivial cohomological weight, which are Steinberg at $p$.
By the computations of Section \ref{sec: spaces}, the $\SO(V_p)$-representation $\cA^\times\hspace{-0.2em}/\Q_p^\times$ is the $\Z$-dual of a flawless $\SO(V_p)$-representation in the sense of \cite[Definition 2.1]{Gehrmannautomorphic}.
It follows by Proposition 3.2 of \textit{loc.cit.~}that the cohomology groups $\HH^i(\Gamma, \cA^\times\hspace{-0.2em}/\Q_p^\times)$ are finitely generated $\Z$-modules.
Combining this result with Proposition \ref{pro: trivialcohomology} yields the following:
\begin{Cor}\label{cor: finiteness}
The  group $\HH^{s}(\Gamma,\cA^\times)$ is finitely generated if $s=1$.
The group $\HH^{s+1}(\Gamma,\cA^\times)$ is finitely generated if $V$ is split at $p$.
\end{Cor}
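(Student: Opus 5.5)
The corollary follows from the long exact sequence in $\Gamma$-cohomology attached to
\[
0 \too \Q_p^\times \too \cA^\times \too \cA^\times/\Q_p^\times \too 0 .
\]
For each degree $i$ it contains the exact piece
\[
\HH^i(\Gamma,\Q_p^\times)\too \HH^i(\Gamma,\cA^\times)\too \HH^i(\Gamma,\cA^\times/\Q_p^\times).
\]
So $\HH^i(\Gamma,\cA^\times)$ is an extension of a subgroup of $\HH^i(\Gamma,\cA^\times/\Q_p^\times)$ by a quotient of $\HH^i(\Gamma,\Q_p^\times)$. Since $\Z$ is noetherian, subgroups of finitely generated $\Z$-modules are finitely generated, and so are extensions of finitely generated modules. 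It therefore suffices to show that both outer terms are finitely generated in the relevant degrees.

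For the right-hand term, Remark \ref{rem: flawless} says that $\cA^\times/\Q_p^\times$ is the $\Z$-dual of a flawless $\SO(V_p)$-representation. This holds in both the split and nonsplit cases. By \cite[Proposition 3.2]{Gehrmannautomorphic}, $\HH^i(\Gamma,\cA^\times/\Q_p^\times)$ is then finitely generated for every $i$. In particular this covers $i=s$ and $i=s+1$.

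For the left-hand term we use Proposition \ref{pro: trivialcohomology}. It gives that $\HH^s(\Gamma,\Q_p^\times)$ is finite when $s=1$. It also gives that $\HH^{s+1}(\Gamma,\Q_p^\times)$ is finite when $V$ is split at $p$. A finite group is finitely generated, so its image in $\HH^i(\Gamma,\cA^\times)$ is finitely generated as well.

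Combining the two bounds in the exact piece with $i=s$ (when $s=1$) and $i=s+1$ (in the split case) gives both assertions.

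The argument really needs finiteness of $\HH^i(\Gamma,\Q_p^\times)$, not just finite generation. In general this group contains $\HH^i(\Gamma,\Z)\otimes\Q_p^\times$, and $\Q_p^\times$ is not a finitely generated group, so this piece is far from finitely generated unless it vanishes up to torsion. That is exactly why the hypotheses $s=1$ and ``split at $p$'' appear, and Proposition \ref{pro: trivialcohomology} provides the finiteness under them. The only remaining input is the flawlessness statement of Remark \ref{rem: flawless}, which is taken as given.
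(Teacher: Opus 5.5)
Your proposal is correct and matches the paper's argument: the long exact sequence for $0\to\Q_p^\times\to\cA^\times\to\cA^\times/\Q_p^\times\to 0$, combined with two inputs. The first is finite generation of $\HH^i(\Gamma,\cA^\times/\Q_p^\times)$, obtained via flawlessness and \cite[Proposition 3.2]{Gehrmannautomorphic}; the second is finiteness of $\HH^i(\Gamma,\Q_p^\times)$ in the relevant degrees, from Proposition \ref{pro: trivialcohomology}.
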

The group $\HH^{s+1}(\Gamma,\cA^\times)$ takes the role of the Picard group in the theory of rigid meromorphic cocycles.
Corollary \ref{cor: finiteness} should be read as the statement that the connected component of the Picard group is trivial if $V$ is split at $p$.
On the other hand, if $V$ is nonsplit at $p$, $\HH^{s+1}(\Gamma,\cA^\times)$ has a torus-like component that is related to automorphic forms that are Steinberg at $p$ (at least if $\Gamma$ is small enough). 
In other words, the Picard group has nontrivial connected component.

More refined results about the cohomology groups can be deduced from \cite{BFG} and the short exact sequences of Remark \ref{rem: flawless}.
As an example, let us consider the following case:
\begin{Pro}\label{pro: trivial}
Assume that $s=1$ and $V$ is split at $p$. 
Then $\HH^1(\Gamma, \cA^\times)$ and $\HH^1(\Gamma, \cA^\times\hspace{-0.2em}/\Q_p^\times)$ have rank $2$.
\end{Pro}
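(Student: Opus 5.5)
First, the reduction to $\cA^\times\hspace{-0.2em}/\Q_p^\times$. Consider the long exact sequence attached to
\[
0\too \Q_p^\times\too \cA^\times\too \cA^\times\hspace{-0.2em}/\Q_p^\times\too 0 .
\]
By Proposition \ref{pro: trivialcohomology}, $\HH^1(\Gamma,\Q_p^\times)$ is finite, since $s=1$. Because $V$ is split at $p$, $\HH^2(\Gamma,\Q_p^\times)$ is finite as well. Hence $\HH^1(\Gamma,\cA^\times)\to\HH^1(\Gamma,\cA^\times\hspace{-0.2em}/\Q_p^\times)$ has finite kernel and finite cokernel, so the two groups have the same rank. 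Both are finitely generated by Corollary \ref{cor: finiteness} and the flawlessness statement. It therefore suffices to prove that $\HH^1(\Gamma,\cA^\times\hspace{-0.2em}/\Q_p^\times)$ has rank $2$, and we may tensor with $\Q$ (or $\C$) throughout.

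Next, split the coefficients. In the split case, Remark \ref{rem: flawless} gives $\cA^\times\hspace{-0.2em}/\Q_p^\times\cong M(\PP^1(\Q_p))_0\oplus M(\PP^1(\Q_p))_0$, with $G\times G$ acting through one factor on each summand. By symmetry it is enough to show that each summand contributes rank $1$. Take the short exact sequence
\[
0\to M(\PP^1(\Q_p))_0\to \Coind_{G_e\times G}^{G\times G}(\chi_{\orient})\to \Coind_{G_v\times G}^{G\times G}(\Z)\to 0 ,
\]
restricted to $\Gamma$ (or to its image in $G\times G$ after passing to the spin cover). Mackey/Shapiro identifies $\HH^i(\Gamma,\Coind(\cdot))$ with a finite sum of $\HH^i(\Gamma_{e},\chi)$ and $\HH^i(\Gamma_v,\Z)$, where $\Gamma_e$ and $\Gamma_v$ are arithmetic subgroups of $\SO(V)$ (level structure only at the first $\SL_2$-factor at $p$, with the second factor still $p$-adically unconstrained, so they remain $S$-arithmetic for a smaller "half" group). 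After tensoring with $\C$, the long exact sequence reads
\[
\HH^0(\Gamma_v)\to \HH^0(\Gamma_e,\chi)\to \HH^1(\Gamma,M_0)\to \HH^1(\Gamma_v)\to \HH^1(\Gamma_e,\chi)\to\cdots .
\]

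The computation then runs as follows. By the results of \cite{BFG} (Borel--Wallach for real rank/$S$-arithmetic groups), the $\HH^1$ of these groups with $\C$-coefficients comes only from residual/trivial representations, because $s=1$ and there is no cuspidal contribution in degree $1$ for signature $(3,1)$. In the nonsimple case one instead uses the quaternionic reduction as in \cite[Lemma 3.9]{Sprehe}. The nontrivial contribution comes from degree $0$. $\HH^0(\Gamma_e,\chi_{\orient})$ over the set of orbits, modulo the image of $\HH^0(\Gamma_v)$, is the cokernel of the coboundary map on the finite quotient graph of $\Gamma$ acting on the tree-direction. This is the first homology/cohomology of a finite graph twisted by orientation, and it contributes the rank-one class coming from the "$\ord_p$ of the determinant" direction. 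Concretely, one obtains the class $\gamma\mapsto$ (image of a fixed degree-$\pm1$ measure, e.g. $\delta_{\gamma x}-\delta_x$), and its nontriviality is checked by pairing with a hyperbolic element at $p$.

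The main obstacle is exactly this bookkeeping in step three. One must show that, after killing the trivial-representation parts, exactly one dimension survives per factor. This requires knowing that $\HH^1(\Gamma_v,\C)\to\HH^1(\Gamma_e,\C)$ is injective and that the twisted-$\HH^0$ cokernel is one-dimensional. For this I would first use the Borel--Wallach vanishing $\HH^1=0$ for the relevant groups (real rank considerations, with $\Gamma_v,\Gamma_e$ being $p$-arithmetic in the other $\SL_2(\Q_p)$-factor). Given that vanishing, the answer reduces to the Euler characteristic of the finite quotient graph, which should equal $1$.
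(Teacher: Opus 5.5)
\paragraph{The gap: reversed long exact sequence.} Your reduction to $\HH^1(\Gamma,\cA^\times\hspace{-0.2em}/\Q_p^\times)$ is fine, and so is the splitting into two copies of $M_0\coloneq M(\PP^1(\Q_p),\Z)_0$. The central step, however, is wrong: your long exact sequence has vertices and edges interchanged. The short exact sequence you quote, $0\to M_0\to\Coind(\chi_{\orient})\to\Coind(\Z)\to 0$, has the edge term in the middle. Shapiro's lemma therefore gives
\[
\bigoplus_{[e]}\HH^0(\Gamma_e,\chi_{\orient})\too\bigoplus_{[v]}\HH^0(\Gamma_v,\Z)\too\HH^1(\Gamma,M_0)\too\bigoplus_{[e]}\HH^1(\Gamma_e,\chi_{\orient}).
\]
What you wrote is instead the sequence attached to the resolution $0\to\Z\to\Coind_{G_v}^{G}(\Z)\to\Coind_{G_e}^{G}(\chi_{\orient})\to 0$ of the \emph{trivial} module. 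Followed literally, it gives rank $0$. Take a finite-index subgroup acting without inversions; by strong approximation it has two vertex orbits and one edge orbit. Your map $\C^2\to\C$ is then $(a,b)\mapsto b-a$, which is surjective. So the ``twisted-$\HH^0$ cokernel'' that you need to be one-dimensional is zero, and after your planned vanishing of $\HH^1(\Gamma_v)$ nothing survives.

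Your closing appeal to an Euler characteristic equal to $1$ is therefore not what your sequence computes. The surviving class is also not first (co)homology of the quotient graph, which here is a single edge and has vanishing $\HH^1$. It is a zeroth-homology count: the cokernel of $\Z\to\Z^2$, $a\mapsto((p+1)a,-(p+1)a)$, which has rank $2-1=1$.

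\paragraph{Repairing the argument.} With the correct sequence, the only vanishing you need is $\HH^1(\Gamma_e,\Z)=0$ for the edge stabilizer, not injectivity of $\HH^1(\Gamma_v)\to\HH^1(\Gamma_e)$. This is exactly the paper's argument. The paper obtains the vanishing from Margulis' normal subgroup theorem, since $\Gamma_e$ is a lattice in $\SO(V_\infty)\times\PGL_2(\Q_p)$; it is not an arithmetic group, contrary to what you wrote.

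Your justification, ``no cuspidal contribution in degree $1$ for signature $(3,1)$'', is false for arithmetic groups: arithmetic subgroups of $\SO(3,1)$ (Bianchi groups, arithmetic Kleinian groups) often have nonzero $\HH^1$. What kills degree $1$ is the extra $p$-adic factor of $\Gamma_e$.

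For the lower bound, note that passing to a finite-index subgroup only yields an upper bound for $\Gamma$. You should either do the orbit count for $\Gamma$ itself, where exactness then gives the rank on the nose, or argue as the paper does. The paper uses the explicit cocycles $j_i(\alpha)(z_1,z_2)=\det(\alpha_i)(-c_iz_i+a_i)$, whose images in $M_0$ are, up to sign, your $\gamma\mapsto\delta_{\gamma\infty}-\delta_\infty$.

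Your proposed nontriviality test via a hyperbolic element cannot work. Restricted to $\langle\gamma\rangle$, that cocycle is a coboundary, because the Dirac measure at a fixed point of $\gamma$ in $\PP^1(\Q_p)$ is a $\gamma$-invariant measure of mass one.
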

\begin{proof}
Since $V$ is split at $p$, we may choose an identification $V_p\cong\Mat_2(\Q_p)$, which induces an isomorphism $X_p$ with the product of two copies of the $p$-adic upper half-plane $\cH_p\coloneq\cH_{\Q_p}$ over $\Q_p$.
We view elements $\alpha\in \Gamma$ as tuples
\[
\alpha=(\alpha_1,\alpha_2)=\left(\begin{pmatrix} a_1 & b_1 \\ c_1 & d_1 \end{pmatrix} , \begin{pmatrix} a_2 & b_2 \\ c_2 & d_2 \end{pmatrix}\right)
\]
via the local identification of $\SO(V_p)$ as a subquotient of $\GL_2(\Q_p)\times \GL_2(\Q_p)$.
Recall the decomposition $\cA^\times\hspace{-0.2em}/\Q_p^\times \cong M(\PP^1(\Q_p),\Z)_0 \oplus M(\PP^1(\Q_p),\Z)_0$ from Theorem \ref{thm: functions}.
An element $\alpha$ acts on each direct sum via the according coordinate projection.
Let us first show that the rank of $\HH^1(\Gamma, M(\PP^1(\Q_p),\Z)_0 )$ is bounded by $1$, where we let $\Gamma$ acts via the first coordinate projection.
We may replace $\Gamma$ by a finite-index subgroup such that the action of $\Gamma$ on the Bruhat--Tits tree of $\PGL_2(\Q_p)$ via the first coordinate projection is orientation-preserving. 
From strong approximation one deduces that the set of vertices of $\cT$ decomposes into two $\Gamma$-orbits and the set of unoriented edges of $\cT$ consists of a single $\Gamma$-orbit.
Let $v,v^{\prime}$ be two adjacent edges of $\cT$ connected by the unoriented edge $e$.
Writing $\Gamma_v$, $\Gamma_{v^{\prime}}$ and $\Gamma_{e}$ for the stabilizers of $v$, $v^{\prime}$ and $e$ in $\Gamma$, the first short exact sequence of Remark \ref{rem: flawless} yields the short exact sequence
\[
0 \too M(\PP^1(\Q_p),\Z)_0 \too \Coind_{\Gamma_e}^{\Gamma}(\Z) \too \Coind_{\Gamma_v}^{\Gamma}(\Z) \oplus \Coind_{\Gamma_{v^{\prime}}}^{\Gamma}(\Z) \too 0
\]
of $\Gamma$-modules.
Using Shapiro's lemma, the associated long exact sequence yields the exact sequence
\[
\HH^0(\Gamma_e,\Z)\too \HH^0(\Gamma_v,\Z)\oplus \HH^0(\Gamma_{v^\prime},\Z) \too \HH^1(\Gamma, M(\PP^1(\Q_p),\Z)_0)\too \HH^1(\Gamma_e,\Z)
\]
in cohomology
Note that $\Gamma_e$ defines a lattice in the Lie group $\SO(V_\infty)\times \PGL_2(\Q_p)$ and, hence, $\HH^1(\Gamma_e,\Z)=0$ by Margulis' normal subgroup theorem (see \cite[Chapter VIII, Theorem 2.6]{Margulis}).
Note that the vanishing of $\HH^1(\Gamma_e,\Z)=0$ can also be deduced from \cite{BFG}.
A straightforward calculation shows that the cokernel of
\[
\HH^0(\Gamma_e,\Z)\too \HH^0(\Gamma_v,\Z)\oplus \HH^0(\Gamma_{v^\prime},\Z) 
\]
has rank $1$.
Since $\HH^1(\Gamma,\Q_p^\times)$ is finite, it remains to show that $\HH^1(\Gamma, \cA^\times/\Q_p^\times)$ has rank at most $2$.
It is easy to see that the two $\cA^\times$-valued $\Gamma$-cocycles 
\[
j_i(\alpha)((z_1,z_2))= \det(\alpha_i) (-c_i z_i + a_i),\quad i=1,2,
\]
generate linearly independent cohomology classes.
\end{proof}
In case $s=1$ and $V$ is nonsplit at $p$, the group $\HH^1(\Gamma, \cA^\times/\Q_p^\times)$ is related to automorphic forms that are Steinberg at $p$.
Via the exceptional isomorphism between $\SO(V)$ and a (quaternionic) Bianchi group, this cohomology group is identified with the one underlying Trifkovi\'c's extensions of the theory of Stark--Heegner points to the Bianchi setting (cf.~\cite{Trifkovic}).
Although there is no bound on the rank of $\HH^1(\Gamma, \cA^\times/\Q_p^\times)$ in this case, applying the same techniques as in \cite[Section 3.3]{Gehrmanninvertible} shows that $\HH^1(\Gamma, \cA^\times)$ has rank $1$.

\subsection{Divisor-valued cohomology groups}\label{sec: divcohomology}
The aim of this section is to calculate the cohomology of $\Gamma$ with values in the group of locally finite rational quadratic divisors on $X_p$ in degree less or equal to $s$ under additional assumptions on $V$.
Given a subset $A\subseteq \cP_V$ write $\Div^\dagger_{\ratq,A}(X_p)\subseteq \Div^\dagger_{\ratq}(X_p)$ for the subspace of divisors supported on $A$.
\subsubsection{The definite case.}
Let us first consider the case that $V$ is positive-definite.
\begin{Lem}\label{lem: definiteorbits}
Assume $V$ is positive-definite.
Let $\Orb\subseteq \cP_V$ be a $\Gamma$-orbit.
Then $\HH^0(\Gamma, \Div^\dagger_{\ratq,\Orb}(X_p))$ is free of rank $1$.
A generator of the cohomology group is given by 
\[
\sD_{\Orb}\coloneq \sum_{\ell \in \Orb} \Delta_{\ell,p}.
\]
\end{Lem}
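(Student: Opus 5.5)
The plan is to pass from divisors to functions on the orbit and then use local finiteness. By the calculations of Section \ref{sec: spaces}, $\Div^\dagger_{\ratq}(X_p)\cong\cF^\dagger_p(\cP_V,\Z)$, so $\Div^\dagger_{\ratq,\Orb}(X_p)$ is the group of functions $f\colon\Orb\to\Z$ such that $\#\supp(f\vert_U)<\infty$ for every compact $U\subseteq\cN_{V_p}$. A $\Gamma$-invariant element is a function that is constant on the orbit $\Orb$. Hence $\HH^0(\Gamma,\Div^\dagger_{\ratq,\Orb}(X_p))$ is either $0$ or $\Z\cdot\sD_{\Orb}$. It is $\Z\cdot\sD_{\Orb}$ exactly when the constant function $1$ on $\Orb$ is locally finite, that is, when $\sD_{\Orb}$ is a well-defined locally finite divisor. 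That element is then clearly a generator, since it is primitive: its coefficients all equal $1$. So the whole content of the lemma is that $\Orb$ meets every compact open subset of $\cN_{V_p}$ in only finitely many lines.

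To show this, fix a line $\ell_0\in\Orb$ and a generator $v_0$, so that $\Orb=\Gamma\ell_0$. Choose a $\Z$-lattice $L\subseteq V$ and replace $\Gamma$ by a finite-index subgroup preserving $L[1/p]$; this is harmless because $\Orb$ is a finite union of orbits of that subgroup. Fix $\Lambda=L\otimes\Z_p$ and use the cover by the compact open sets $\cN_{\Lambda,r}$ from Remark \ref{rem: isotropy}. Since every compact set is covered by finitely many of them, it suffices to prove that $\Orb\cap\cN_{\Lambda,r}$ is finite for each $r\in|\Q_p^\times|$.

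Take $\ell=\gamma\ell_0\in\Orb\cap\cN_{\Lambda,r}$. Then $\gamma v_0\in L[1/p]$, and we may rescale by a power of $p$ to get the generator $v_\ell$, which is primitive in $\Lambda$ and satisfies $|q(v_\ell)|_p=r$. Because $\gamma$ is orthogonal, $q(\gamma v_0)=q(v_0)$, and the rescaling by $p^k$ is then determined by $r$ up to a bounded ambiguity. Moreover, $v_\ell$ lies in $L[1/p]\cap\Lambda=L$, and $q(v_\ell)=p^{2k}q(v_0)$ takes only finitely many values. Since $V$ is positive definite, $q$ restricted to $L$ is a positive definite integral-valued form up to a bounded denominator, so only finitely many vectors of $L$ have a given norm. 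Therefore only finitely many lines $\ell$ occur, which gives local finiteness.

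The main obstacle I expect is bookkeeping the identification of Remark \ref{rem: isotropy} in the non-self-dual case, namely checking that the sets $\cN_{\Lambda,r}$ are compact and that every compact set meets only finitely many of them. These facts are asserted in Section \ref{sec: spaces}, and I would rely on them there.
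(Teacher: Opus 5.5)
Your proposal is correct and takes essentially the paper's route. Like the paper, you reduce to local finiteness of $\sD_{\Orb}$, place the orbit among vectors of fixed norm in a $\Gamma$-stable $\Z[1/p]$-lattice, and conclude using the cover by the sets $\cN_{\Lambda,r}$ and finiteness of representation numbers of a positive-definite $\Z$-lattice. The one cosmetic difference is that you take $\Lambda=L\otimes\Z_p$, so that $L[1/p]\cap\Lambda=L$ directly, while the paper uses an arbitrary $\Lambda$ and the $\Z$-lattice $L\cap p^t\Lambda$. Two small remarks: the exponent $k$ is in fact uniquely determined by $r$, and $v_0$ should be chosen in $L[1/p]$.
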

\begin{proof}
One only has to check that $\sD_{\Orb}$ is locally finite.
Since $\Gamma$ is a $p$-arithmetic subgroup, there exists a $\Z[1/p]$-lattice $L \subseteq V$ and a positive rational number $m$ such that
\[
\Orb\subseteq \Orb(m,L)\coloneq \{\Span(v)\mid v\in L,\ q(v)=m\}.
\]
In view of the isomorphism of $\Div^\dagger_{\ratq,\Orb}(X_p)$ with $\cF^\dagger(\Orb,\Z)$, it is enough to show that $\Orb(m,L)\cap K$ is finite for every $K$ of a compact open covering of $\cN_{V_p}$.
By considering the covering associated to a $\Z_p$-lattice $\Lambda\subseteq V_p$ in Remark \ref{rem: isotropy},
it suffices to show that the sets
\[
\{ v\in L \mid q(v)=m,\ \iso_{\Lambda}(\Span(v))=r\}
\]
are finite for every $r\in \Z$.
One easily checks that the set $\{v \in V_p \mid q(v)=m,\ \iso_{\Lambda}(\Span(v))=r\}$ is contained in $p^t \Lambda$ for some $t\in \Z$.
Therefore, the claim follows from the finiteness of representation numbers for the definite $\Z$-lattice $L\cap p^t \Lambda$.
\end{proof}
Lemma \ref{lem: definiteorbits} gives an injective homomorphism
\begin{equation}\label{eq: definitedivisors}
\Z[\Gamma\backslash \cP_V] \intoo \HH^0(\Gamma, \Div^\dagger_{\ratq}(X_p)),\quad
\sum_{\Orb\in\Gamma\backslash \cP_V} n_\Orb\ \Orb \mapstoo \sum_{\Orb\in\Gamma\backslash \cP_V} n_\Orb\ \sD_\Orb 
\end{equation}
in case $V$ is positive-definite.

\begin{Pro}
Assume that $V$ is positive-definite and split at $p$.
Then \eqref{eq: definitedivisors} is an isomorphism.
\end{Pro}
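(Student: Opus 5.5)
The plan is to prove surjectivity of \eqref{eq: definitedivisors}, since injectivity is already provided by Lemma \ref{lem: definiteorbits}. Under the identification $\Div^\dagger_{\ratq}(X_p)\cong \cF^\dagger_p(\cP_V,\Z)$, a class in $\HH^0(\Gamma, \Div^\dagger_{\ratq}(X_p))$ is a $\Gamma$-invariant function $f$ on $\cN_{V_p}$. This function is supported on $\cP_V$ and has finite support on every compact subset of $\cN_{V_p}$. Being $\Gamma$-invariant, $f$ is constant on $\Gamma$-orbits, so formally $f=\sum_{\Orb} n_\Orb \mathbf{1}_\Orb$ with $n_\Orb$ the value of $f$ on $\Orb$. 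This equals $\sum n_\Orb \sD_\Orb$ as a formal divisor. So everything reduces to one claim: only finitely many orbits $\Orb\in\Gamma\backslash\cP_V$ have $n_\Orb\neq 0$.

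To prove the claim, I will first use that $V$ is positive-definite. Then $\SO(V)$ is $\Q$-anisotropic and $\SO(V_\infty)$ is compact. Hence, by Borel--Harish-Chandra (Godement's criterion), the $p$-arithmetic group $\Gamma$ embeds as a discrete cocompact subgroup of $\SO(V_p)$. Next I will use the split hypothesis. By the split case of Section \ref{sec: spaces}, $\cN_{V_p}=\PGL_2(\Q_p)=G$, with $\SO(V_p)$ acting through pairs $(A_1,A_2)$ with $\det A_1=\det A_2$ via $A\mapsto A_1AA_2^{-1}$. The image $H$ of $\SO(V_p)$ in $G\times G$ is a closed subgroup of finite index, cut out by a condition on determinants modulo squares. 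Consequently $\Gamma$ is also cocompact in $H$, and I choose a compact set $C\subseteq H$ with $H=\Gamma C$.

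Now let $A\in\cN_{V_p}$ lie in the support of $f$. Since $H$ has finite index in $G\times G$, which acts transitively on $G$, the set $\cN_{V_p}$ decomposes into finitely many $H$-orbits. I fix representatives $A^{(1)},\dots,A^{(k)}$ of these orbits. Then $A=h.A^{(j)}$ for some $j$ and some $h=\gamma c$ with $\gamma\in\Gamma$ and $c\in C$. So every $\Gamma$-orbit in $\cN_{V_p}$ meets the compact set
\[
K\coloneq \bigcup_{j=1}^{k} C.A^{(j)}\subseteq G,
\]
which is compact as a continuous image of $C$. By local finiteness, $f$ has only finitely many nonzero values on $K$. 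Each orbit $\Orb$ with $n_\Orb\ne 0$ contributes at least one such point, so there are only finitely many such orbits. Hence $f$ lies in the image of $\Z[\Gamma\backslash\cP_V]$.

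I expect the main obstacle to be the cocompactness input and its transfer from $\SO(V_p)$ to the action on $\cN_{V_p}\cong G$. Two points need care: the correct identification of the image of $\SO(V_p)$ in $\PGL_2(\Q_p)^2$, including its finite index, and the fact that this description of the orbits holds without assuming a self-dual lattice. For the latter I would rely on the explicit split model and the twisted diagonal description of Lemma \ref{lem: divisoriso}, which already gives local finiteness in terms of compact subsets of $G$. The split hypothesis is what makes $\cN_{V_p}$ a finite union of orbits of a group in which $\Gamma$ is cocompact.
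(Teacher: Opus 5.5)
Your proof is correct and follows essentially the same route as the paper. The paper invokes finiteness of class numbers to get finitely many $\Gamma$-orbits on $\cV\times\cV$. It follows that every orbit $\Orb$ meets one of finitely many affinoids $\red^{-1}(v_1^i,v_2^i)$, and local finiteness then leaves only finitely many orbits in the support. This is the same input as your cocompactness of $\Gamma$ in the image $H$ of $\SO(V_p)$, combined with the finiteness of $H$-orbits on $\cN_{V_p}\cong\PGL_2(\Q_p)$ and local finiteness on the compact set $K$.
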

\begin{proof}
Since $V$ is split at $p$, we may identify $X_p$ with $\cH_p\times \cH_p$.
This identification induces an action of $\Gamma$ on the product of the Bruhat--Tits tree $\cT$ of $\PGL_2(\Q_p)$ with itself.
The finiteness of class numbers of semi-simple algebraic groups implies that the set $\cV\times \cV$ of pairs of vertices of $\cT$ decomposes into a finite union of $\Gamma$-orbits.
Let $\{(v_1^i,v_2^i)\}$, $1\leq i \leq h$, be a set of representatives for these $\Gamma$-orbits.
Then for every $\Gamma$-orbit  $\Gamma$-orbit $\Orb\subseteq \cP_V$, the divisor $\sD_{\Orb}$ intersects the preimage of $(v_1^i,v_2^i)$ under the reduction map for at least one $1\leq i\leq h$.
Thus, a locally finite divisor can only be supported on finitely many $\Gamma$-orbits, which proves the assertion.
\end{proof}

\subsubsection{The indefinite case.}
Now assume that $s\geq 1$.
Contrary to the definite case, arithmetic subgroups of $\SO(V)$ are not finite in the indefinite case.
Let us first calculate the relevant cohomology groups of arithmetic subgroups of $\SO(V)$:
by \cite[Section 11.4]{BS}, every arithmetic subgroup of $\SO(V)$ is a virtual duality group of virtual cohomological dimension $rs-l_V$, where $0\leq l_V\leq 2$ denotes the Witt index of $V$.
The duality module can be described explicitly:
let $D_V$ be the Steinberg module of $\SO(V)$ (see for example \cite{Reeder}).
Then $D_V$ is the duality module for every torsion-free arithmetic subgroup of $\SO(V)$.
If $V$ is anisotropic, the module $D_V$ is equal to $\Z$ equipped with the trivial $\SO(V)$-action.
If $V$ is not anisotropic, $D_V$ is a free $\Z$-module of infinite rank.
Given an arbitrary subgroup $\Gammao\subseteq \SO(V)$ and a $\Gammao$-module $M$, we define the \emph{cohomology with compact supports} via
\[
\HH^i_c(\Gammao,M)\coloneq \HH^{i-l_V}(\Gammao, \Hom_{\Z}(D_V,M)).
\]
Note that $\HH^i_c(\Gammao,M)=\HH^i(\Gammao,M)$ if $V$ is anisotropic.
In case $\Gammao\subseteq \SO(V)$ is a torsion-free arithmetic subgroup, duality yields a canonical isomorphism
\begin{equation}\label{eq: duality}
\HH^i_c(\Gammao,M)\cong \HH_{rs-i}(\Gammao, M).
\end{equation}

Given an arithmetic group $\Gammao\subseteq \SO(V)$ and a $\Gammao$-stable subset $A\subseteq \cP_V$, we may decompose
\begin{align*}
\Z[A]=\bigoplus_{\Orb\in \Gammao\backslash A}\Z[\Orb].
\end{align*}
Since $\Z$ has a resolution by finitely generated projective $\Z[\Gammao]$-modules, it follows that the canonical map
\[
\bigoplus_{\Orb\in \Gammao\backslash A}\HH^\ast(\Gammao,\Z[\Orb])
\xlongrightarrow{\sim}\HH^\ast(\Gamma_\circ,\Z[A])
\]
is an isomorphism.
The same is true for cohomology with compact supports.
Consider the $\SO(V)$-stable subset
\[
\cP_V^c\coloneq \{\ell\in \cP_V \mid \ell^\perp\ \mbox{is anisotropic}\}
\]
of the set of positive-definite lines in $V$.
Note that in case the Witt index of $V$ is $2$, that is, $V$ is the direct sum of two hyperbolic planes, $\cP_V^c=\emptyset$.
More generally, for $A\subseteq \cP_V$ put $A^c\coloneq A \cap \cP_V^c$.
Let $\SO(V)^{+}$ be the subgroup of elements having trivial spinor norm.
\begin{Lem}\label{lem: arithmeticpreparation}
Let $\Gammao\subseteq \SO(V)$ be an arithmetic subgroup and $A\subseteq \cP_V$ a $\Gammao$-stable subset.
Then
\[
\HH^i_c(\Gammao, \Z[A^c])=\HH^i_c(\Gammao, \Z[A]) = 0 \quad \forall i < s.
\]
The inclusion $\Z[A^c]\into \Z[A]$ induces an isomorphism
\[
\HH^s_c(\Gammao, \Z[A^c])\xlongrightarrow{\sim}\HH^s_c(\Gammao, \Z[A]).
\]
If furthermore $\Gammao$ is a neat subgroup of $\SO(V_\infty)^{+}$, then
\[
\HH^s_c(\Gammao,\Z[\Orb])\cong \Z
\]
for every $\Gammao$-orbit $\Orb\subseteq \cP_V^c$.
\end{Lem}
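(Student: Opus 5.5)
The plan is to reduce everything to a single orbit, then to a single stabilizer via duality and Shapiro's lemma, and finally to read off the answer from the virtual cohomological dimension of arithmetic subgroups of $\SO(\ell^\perp)$.

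\emph{Step 1: reductions.} By the decomposition $\Z[A]=\bigoplus_{\Orb}\Z[\Orb]$ and the compatibility of $\HH^\ast_c(\Gammao,-)$ with these direct sums noted above, it suffices to treat $A=\Orb$ a single $\Gammao$-orbit. Then $A^c$ is either $\Orb$ or empty, because $\cP_V^c$ is $\SO(V)$-stable. So the first two assertions reduce to one claim: $\HH^i_c(\Gammao,\Z[\Orb])=0$ for $i<s$, and in addition for $i=s$ whenever $\Orb\not\subseteq\cP_V^c$.

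Next I reduce to a torsion-free group. Choose a normal, neat, torsion-free subgroup $\Gamma'\subseteq\Gammao$ of finite index that lies in $\SO(V_\infty)^+$. The orbit $\Orb$ splits into finitely many $\Gamma'$-orbits. The Hochschild--Serre spectral sequence
\[
\HH^a(\Gammao/\Gamma',\HH^b_c(\Gamma',\Z[\Orb]))\Rightarrow\HH^{a+b}_c(\Gammao,\Z[\Orb])
\]
then shows two things. If $\HH^b_c(\Gamma',\Z[\Orb])$ vanishes for $b<s$, the same holds for $\Gammao$. Moreover $\HH^s_c(\Gammao,\Z[\Orb])=\HH^s_c(\Gamma',\Z[\Orb])^{\Gammao/\Gamma'}$, and the vanishing at $i=s$ for non-compact orbits also descends.

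\emph{Step 2: duality plus Shapiro.} For torsion-free $\Gammao$ and $\ell\in\Orb$ we have $\Z[\Orb]\cong\Ind_{\Gammao_{,\ell}}^{\Gammao}(\Z)$. Duality \eqref{eq: duality} followed by Shapiro's lemma for homology gives
\[
\HH^i_c(\Gammao,\Z[\Orb])\cong\HH_{rs-i}(\Gammao,\Ind_{\Gammao_{,\ell}}^{\Gammao}\Z)\cong\HH_{rs-i}(\Gammao_{,\ell},\Z).
\]
The stabilizer $\Gammao_{,\ell}$ is a torsion-free arithmetic subgroup of $\SO(\ell^\perp)$. The space $\ell^\perp$ has signature $(r-1,s)$, so by \cite[Section 11.4]{BS} it is a duality group of cohomological dimension $(r-1)s-l_{\ell^\perp}$. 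Its homology therefore vanishes in degrees above this dimension. Since $rs-i>(r-1)s-l_{\ell^\perp}$ is equivalent to $i<s+l_{\ell^\perp}$, we get vanishing for all $i<s$. At $i=s$ we get vanishing unless $l_{\ell^\perp}=0$, that is, unless $\ell\in\cP_V^c$. This yields both the vanishing statement and the isomorphism $\HH^s_c(\Gammao,\Z[A^c])\cong\HH^s_c(\Gammao,\Z[A])$.

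\emph{Step 3: the compact orbits.} Let $\ell\in\cP_V^c$ and let $\Gammao$ be neat in $\SO(V_\infty)^+$. Then $\Gammao_{,\ell}$ is cocompact in $\SO(\ell^\perp_\infty)$, and the quotient $M=\Gammao_{,\ell}\backslash\SO(\ell^\perp_\infty)/K$ is a closed aspherical manifold of dimension $(r-1)s$. Its top homology $\HH_{(r-1)s}(\Gammao_{,\ell},\Z)$ is $\Z$ exactly when $M$ is orientable. Equivalently, in the language of duality groups, the dualizing module $D_{\ell^\perp}=\Z$ must carry the trivial action, so that $\HH_{(r-1)s}\cong\HH^0(\Gammao_{,\ell},\Z)=\Z$.

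\emph{Main obstacle.} I expect the main obstacle to be this orientability. The plan is to show that elements of $\SO(V_\infty)^+$ fixing $\ell$ lie in the identity component of $\SO(\ell^\perp_\infty)$, using that the spinor norm controls the component group. It follows that they preserve the orientation of the symmetric space of $\SO(\ell^\perp_\infty)$. A secondary technical point is to check that the duality isomorphism \eqref{eq: duality}, with its shift by $l_V$, is compatible with Shapiro's lemma in the form used in Step 2.
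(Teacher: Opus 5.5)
Your proposal is correct and follows essentially the same route as the paper: reduce to a neat, torsion-free normal subgroup via Hochschild--Serre, then use the duality isomorphism \eqref{eq: duality} and Shapiro's lemma to reach $\HH_{rs-i}(\Gamma_{\circ,\ell},\Z)$, and conclude from the cohomological dimension $(r-1)s-l_{\ell^\perp}$ of the stabilizer. Your spinor-norm argument for orientability is what the paper compresses into the phrase ``neat arithmetic subgroup of $\SO(\ell^\perp)^{+}$'' and the remark after the lemma: neatness makes the stabilizer act trivially on $\ell$, so $\gamma|_{\ell^\perp}$ has trivial real spinor norm and lies in the identity component of $\SO(\ell^\perp_\infty)$, and this matters because $\dim\ell^\perp=3$ is odd.
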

\begin{proof}
The following is a slight variant of the proof of \cite[Proposition 3.2]{Gehrmannrational}.
First assume that $\Gammao$ is neat.
Let $\Orb\subseteq \cP_V$ be a $\Gammao$-orbit.
Fix an element $\ell\in \Orb$ and let $\Gamma_{\circ,\ell}\subseteq \Gammao$ be its stabilizer.
Then we may identify
\[
\Z[\Orb]\cong \Ind_{\Gamma_{\circ,\ell}}^{\Gammao}(\Z).
\]
Applying the duality isomorphism \eqref{eq: duality} and Shapiro's lemma for homology yields isomorphisms
\[
\HH^i_c(\Gammao,\Z[\Orb])\cong \HH_{rs-i}(\Gammao,\Z[\Orb])\cong \HH_{rs-i}(\Gamma_{\circ,\ell},\Z).
\]
Since $\Gamma_{\circ,\ell}$ is a neat arithmetic subgroup of $\SO(\ell^\perp)^{+}$, its cohomological dimension is less or equal to $(r-1)s$.
The cohomological dimension is equal to $(r-1)s$ if and only if $\ell \in \cP_V^c$, in which case $\HH_{(r-1)s}(\Gamma_{\circ,\ell},\Z)\cong \Z$.
This proves the neat case.
The general case follows from the fact that every arithmetic subgroup has a torsion-free normal subgroup of finite index and an easy application of the Hochschild--Serre spectral sequence.
\end{proof}
\begin{Rem}
The proof only requires that $\Gammao$ consists of elements, whose real spinor norm is trivial.
\end{Rem}

Let $A\subseteq \cP_V$ be a $\Gamma$-stable subset.
Consider the canonical homomorphism
\begin{align}\label{eq: divsum}
\bigoplus_{\Orb \in \Gamma\backslash A} \HH^{s}_c(\Gamma, \Div^\dagger_{\ratq,\Orb}(X_p)) \too \HH^{s}_c(\Gamma, \Div^\dagger_{\ratq,A}(X_p)) \tag{+}
\end{align}
induced by the inclusions $\Div^\dagger_{\ratq,\Orb}(X_p) \into \Div^\dagger_{\ratq,A}(X_p)$.

Suppose now that $V$ is split at $p$ and fix an isomorphism $V_p\cong \Mat_2(\Q_p)$.
This identification induces a map $\SO(V)^+\to \PSL_2(\Q_p)\times \PSL_2(\Q_p)$ and, thus, an action of $\SO(V)^+$ on the product of the Bruhat--Tits tree $\cT$ of $\PGL_2(\Q_p)$ with itself.
Suppose that $\Gamma$ is a subgroup of $\SO(V)^+$.
As in Section \ref{sec: SL2}, there is a $\Gamma$-equivariant decomposition
\begin{align*}
\Div^\dagger_{\ratq,A}(X_p)=\Div^\dagger_{\ratq,A}(X_p)^+ \oplus \Div^\dagger_{\ratq,A}(X_p)^-
\end{align*}
for every $\Gamma$-stable subset $A\subseteq \cP_V$.
Suppose that $\Gamma$ is a $p$-arithmetic subgroup of $\SO(V)^+$.
Let $v_0,v_1$ be two vertices of the Bruhat--Tits tree of $\PGL_2(\Q_p)$ connected by an edge $e$.
Furthermore, fix $g\in \PGL_2(\Q_p)$ with $ge=e$ and $g v_0 =v_1$.
Let $\Gamma_{i,j}$ be the stabilizer of $(v_i,v_j)$ and $\Gamma_e$ the stabilizer of $e$ in $\Gamma$, respectively.
By strong approximation, the image of $\Gamma$ in $\PSL_2(\Q_p)\times \PSL_2(\Q_p)$ is dense.
Thus, arguing as in Section \ref{sec: divisors} yields the short exact sequences
\begin{align*}
\begin{split}
0\to \Div^\dagger_{\ratq,A}(X_p)^+\to \bigoplus_{i=0}^{1} \Coind_{\Gamma_{i,i}}^{\Gamma}\left(\Z[A\cap K_i]\right) 
&\to \Coind_{\Gamma_e}^{\Gamma}\left(\Z[A\cap I]\right) \to 0\\
\intertext{and}
0\to \Div^\dagger_{\ratq,A}(X_p)^-\to \bigoplus_{i=0}^{1} \Coind_{\Gamma_{i,i-1}}^{\Gamma}\left(\Z[A\cap K_i g]\right) 
&\to \Coind_{\Gamma_e}^{\Gamma}\left(\Z[A\cap Ig]\right) \to 0
\end{split}
\end{align*}
of $\Gamma$-modules.

\begin{Thm}\label{thm: divisors}
Assume that $V$ is split at $p$. 
Let $\Gamma\subseteq \SO(V)$ be a $p$-arithmetic subgroup and $A\subseteq \cP_V$ a $\Gamma$-stable subset.
Then
\[
\HH^i_c(\Gamma, \Div^\dagger_{\ratq,A}(X_p)) = 0 \quad \forall i < s
\]
and the inclusion $\Div^\dagger_{\ratq,A^c}(X_p) \into \Div^\dagger_{\ratq,A}(X_p)$ induces an isomorphism
\[
\HH^s_c(\Gamma, \Div^\dagger_{\ratq,A^c}(X_p))\xlongrightarrow{\sim}\HH^s_c(\Gamma, \Div^\dagger_{\ratq,A}(X_p)).
\]
If $\Gamma$ is a neat subgroup of $\SO(V)^+$, then \eqref{eq: divsum} is an isomorphism and
\[
\HH^s_c(\Gamma, \Div^\dagger_{\ratq,\Orb}(X_p)) \cong \Z
\]
for every $\Gamma$-orbit $\Orb\subseteq A^c$.
\end{Thm}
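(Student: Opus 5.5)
We first reduce to the case that $\Gamma$ is a neat subgroup of $\SO(V)^+$. Every $p$-arithmetic subgroup contains such a subgroup $\Gamma'$ as a normal subgroup of finite index. The Hochschild--Serre spectral sequence for $\Gamma'\trianglelefteq\Gamma$, with $\Gamma/\Gamma'$ finite, then does the rest. It gives the vanishing in degrees $<s$ for $\Gamma$ from the vanishing for $\Gamma'$. It also gives that $\HH^s_c(\Gamma,-)$ equals $\HH^0(\Gamma/\Gamma',\HH^s_c(\Gamma',-))$, which transports the isomorphism induced by $A^c\subseteq A$. This step is essentially the one at the end of the proof of Lemma \ref{lem: arithmeticpreparation}. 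So assume from now on that $\Gamma$ is neat in $\SO(V)^+$.

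We use the decomposition $\Div^\dagger_{\ratq,A}(X_p)=\Div^\dagger_{\ratq,A}(X_p)^+\oplus\Div^\dagger_{\ratq,A}(X_p)^-$ and treat each summand through the two short exact sequences stated just before the theorem. Shapiro's lemma identifies the cohomology of the middle and right terms. For instance,
\[
\HH^i_c(\Gamma,\Coind_{\Gamma_{i,i}}^{\Gamma}(\Z[A\cap K_i]))\cong \HH^i_c(\Gamma_{i,i},\Z[A\cap K_i]),
\]
and similarly for $\Gamma_e$ and for the odd part. Shapiro applies because $\Hom_\Z(D_V,\Coind(M))\cong\Coind(\Hom_\Z(D_V,M))$. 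The groups $\Gamma_{i,j}$ and $\Gamma_e$ are arithmetic subgroups of $\SO(V)$, being compact-open stabilizers in a $p$-arithmetic group. The sets $A\cap K_i$, $A\cap K_ig$, $A\cap I$ and $A\cap Ig$ are stable under them. Lemma \ref{lem: arithmeticpreparation} therefore applies to each term: the cohomology with compact supports vanishes below degree $s$ and is unchanged in degree $s$ when we pass from $A$ to $A^c$. The long exact sequence then gives $\HH^i_c(\Gamma,\Div^\dagger_{\ratq,A}(X_p))=0$ for $i<s$. In degree $s$ we obtain the exact sequence
\[
0\to \HH^s_c(\Gamma,\Div^{\pm}_{A})\to \bigoplus_i \HH^s_c(\Gamma_{i,\ast},\Z[A\cap K_i(g)])\to \HH^s_c(\Gamma_e,\Z[A\cap I(g)]),
\]
which is compatible with the restriction to $A^c$. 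The five lemma (or the injective left map plus the isomorphisms on the other two terms) yields the isomorphism $\HH^s_c(\Gamma,\Div_{A^c})\cong\HH^s_c(\Gamma,\Div_A)$.

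The main obstacle is the last part: describing the kernel of the degree-$s$ map explicitly and identifying it orbitwise. For $A=\Orb$ a single $\Gamma$-orbit in $\cP_V^c$, decompose $\Z[\Orb\cap K_i]$ into $\Gamma_{i,i}$-orbits, and likewise for $\Gamma_e$. By Lemma \ref{lem: arithmeticpreparation}, each such suborbit contributes a copy of $\Z$. The degree-$s$ map is then a map between free modules indexed by the suborbits: it sends the class of a $\Gamma_{i,\ast}$-suborbit to the sum of the classes of the $\Gamma_e$-suborbits it contains, weighted by the corestriction/restriction indices coming from the homology description \eqref{eq: duality}. One must check that the kernel has rank exactly one, and that it is spanned by the class of $\sD_\Orb$. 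Concretely, $\Gamma$-orbits of pairs (point of $\Orb$, simplex) correspond via $\Gamma_\ell$ to the $\Gamma_\ell$-orbits of simplices of the tree. Since $\Gamma_\ell$ is a $p$-arithmetic subgroup of $\SO(\ell^\perp)$ acting on $\cT$ along the twisted diagonal, the kernel is identified with $\HH_0$ (in the top homological degree, via duality) of $\Gamma_\ell$ with coefficients in the chain complex of $\cT$. This is $\Z$, because the tree is contractible.

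Finally, the orbitwise decomposition of the coefficient modules is compatible with the exact sequences. Together with the fact that $\Z$ admits a resolution by finitely generated projective modules over the arithmetic groups $\Gamma_{i,j}$ and $\Gamma_e$, this gives the direct sum decomposition \eqref{eq: divsum}. It is an isomorphism because the cohomology of each term commutes with direct sums over orbits and the left exactness in degree $s$ is preserved.
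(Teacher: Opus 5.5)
Your proposal is correct, and it follows the paper's argument for three of the four claims. The reduction to neat $\Gamma\subseteq\SO(V)^+$ via Hochschild--Serre, the vanishing below degree $s$, and the isomorphism for $A^c\subseteq A$ all come, as in the paper, from Shapiro's lemma on the plus/minus resolutions together with Lemma \ref{lem: arithmeticpreparation}.

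Where you differ is the computation of $\HH^s_c(\Gamma,\Div^\dagger_{\ratq,\Orb}(X_p))$ for neat $\Gamma$.

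\emph{The paper's route.} It applies strong approximation to $\Spin(\ell^\perp)$ to show that elements of $A\cap K_0$ lying in one $\Gamma$-orbit already lie in one $\Gamma_{0,0}$-orbit. In your language, $\Gamma_\ell$ is transitive on vertices of a given parity, so $\Gamma_\ell\backslash\cT$ is a single edge. The degree-$s$ kernel then projects isomorphically onto either summand and is free on the $\Gamma$-orbits.

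\emph{Your route.} You identify the $\Gamma_{0,0}$-, $\Gamma_{1,1}$- and $\Gamma_e$-suborbits with $\Gamma_\ell$-orbits of vertices and edges of $\cT$, and recognize the kernel as an $\HH^0$. This only needs $\Gamma_\ell\backslash\cT$ to be connected, so you avoid the second strong approximation argument.

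\emph{What each buys.} The paper obtains the sharper fact that $\Orb\cap K_0$ is a single $\Gamma_{0,0}$-orbit, which it reuses for the final proposition on $\sD^\Gamma_\Orb$. Your route still shows that the kernel projects isomorphically onto any single vertex-orbit coordinate, which would serve there as well.

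Two points in your sketch need tightening.

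First, the map between suborbit modules carries no index weights. Restriction in cohomology corresponds under duality to transfer in top homology, and transfer sends fundamental classes to fundamental classes. So every coefficient is $\pm1$, with the sign coming from $\partial^\ast$. With genuine index weights the map would not be the restriction differential, and your $\HH^0$ identification would not hold as stated.

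Second, the kernel of this vertex-to-edge map is $E_2^{0,0}=\HH^0(\Gamma_\ell,\Z)$ of the cohomological equivariant spectral sequence for $\Gamma_\ell$ acting on the contractible tree. Equivalently, it is $\HH^0$ of the finite connected graph $\Gamma_\ell\backslash\cT$. A homological $\HH_0$ would instead appear as a cokernel of corestrictions. Both happen to be $\Z$ here, but only the former is the kernel you need.

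Three smaller remarks. Matching $\Gamma_{0,0}$-orbits on $\Orb\cap K_0$ with $\Gamma_\ell$-orbits of vertices uses that $\Gamma$ is transitive on pairs of vertices of the parity of $v_0$; this is strong approximation for $\Spin(V)$, already built into the exact sequences. For \eqref{eq: divsum}, apply the orbit decomposition to $\Z[A\cap K_i]$ after Shapiro, since $\Div^\dagger_{\ratq,A}(X_p)$ itself is not the direct sum of the $\Div^\dagger_{\ratq,\Orb}(X_p)$. Finally, spanning by $\sD_\Orb$ is not part of this theorem.
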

\begin{proof}
Let $\Gamma\subseteq \SO(V)^+$ be a neat $p$-arithmetic subgroup.
The vanishing of the cohomology in degree $i < s$ follows from the long exact sequences in cohomology associated to the short exact sequences above for $A=\cP_V$ and $\cP_V^c$ together with Lemma \ref{lem: arithmeticpreparation}.
Furthermore, the cohomology of $\Gamma$ in degree $s$ with values in $\Div^\dagger_{\ratq,A}(X_p)^\pm$ and $\Div^\dagger_{\ratq,A^c}(X_p)^\pm$ agrees.
More precisely, on the plus part it is given by the kernel of the map
\[
\Z[\Gamma_{0,0}\backslash (A^c\cap K_0)] \oplus \Z[\Gamma_{1,1}\backslash (A^c\cap K_1)] \too \Z[\Gamma_e\backslash (A^c \cap I)].
\]
The projection of the kernel to either of the two direct summands is an isomorphism.
It remains to show that if $\ell,\ell' \in A \cap K_0$ lie in the same $\Gamma$-orbit, then they already lie in the same $\Gamma_0$-orbit.
This can be deduced from strong approximation applied to the group $\Spin(\ell^\perp)$.
The minus part can be treated equally.
The statement for general $\Gamma$ can be reduced to the neat case by applying the Hochschild--Serre spectral sequence.
\end{proof}
\begin{Rem}
Assume that $V$ is split at $p$ and $\Gamma$ is neat.
Then, Theorem \ref{thm: divisors} gives a complete description of $\HH^s(\Gamma, \Div^\dagger_{\ratq}(X_p))$ in case $V$ is anisotropic.
In general, the groups $\HH^s(\Gamma, \Div^\dagger_{\ratq}(X_p))$ and $\HH^s_c(\Gamma, \Div^\dagger_{\ratq}(X_p))$ behave rather differently.
For example, if $V$ is the orthogonal direct sum of two hyperbolic planes, the set $\cP_V^c$ is empty and, thus, $\HH^2_c(\Gamma, \Div^\dagger_{\ratq}(X_p))=0$.
But on the other hand, the computations of \cite[Section 5]{Sprehe} show that the group $\HH^2(\Gamma, \Div^\dagger_{\ratq}(X_p))$ has infinite rank.
\end{Rem}

\subsection{Intersection numbers}\label{sec: intersections}
Theorem \ref{thm: divisors} yields an abstract basis of the divisor-valued cohomology group in degree $s$ assuming that $V$ is anisotropic and split at $p$.
It remains to construct explicit cocycles representing the cohomology classes that generalize the construction in the definite case. 
The aim of this section is to construct an explicit class $\sD_\Orb$ of $\HH^s(\Gamma, \Div^\dagger_{\ratq,\Orb}(X_p))$ for every $\Gamma$-orbit $\Orb\subseteq \cP_V$.
Our construction is a slight generalization of the one made in Section 2.4.2 of \cite{DGL}, in which $V_p$ is assumed to admit a self-dual $\Z_p$-lattice.
Furthermore, we show that assuming that $V$ is anisotropic and split at $p$ the class $\sD_\Orb$ generates $\HH^s(\Gamma, \Div^\dagger_{\ratq,\Orb}(X_p))$.

\subsubsection{Comparison of fundamental classes.}
The main computation happens on the level of arithmetic subgroups.
As the same reasoning can be applied to other situations as well, e.g., the cycle-valued cohomology groups studied in \cite{Gehrmannrational}, we work in a more general framework:
fix a contractible smooth oriented manifold $X$ of dimension $m$ with an orientation-preserving action by a (discrete) group $\Group$ such that:
\begin{enumerate}[(i)]
\item\label{item: first} the action of $\Group$ on $X$ is properly discontinuous, that is, every $x\in X$ has a neighbourhood $U$ such that $\gamma U \cap U = \emptyset$ for all $\gamma\in \Group$, $\gamma \neq 1$, and
\item the quotient $\Group\backslash X$ is compact.
\end{enumerate}
It follows that $\Group$ is a Poincar\'e duality group of cohomological dimension $m$.
Furthermore, fix an integer $k\geq 1$ and a countable collection $\cC$ of oriented closed submanifolds of $X$ fulfilling the following assumptions:
\begin{enumerate}[(i)]
\setcounter{enumi}{2}
\item $\Delta$ is contractible, of codimension $k$ and its complement $X\setminus \Delta$ is homotopic to a $k-1$-sphere for all $\Delta \in \cC$,
\item $\gamma \Delta \in \cC$ for all $\gamma\in \Gamma$, $\Delta\in\cC$,
\item every $\Group$-orbit $\Orb\subseteq \cC$ is locally finite, that is, the set $\{\Delta\in \Orb \mid \Delta\cap K \neq \emptyset\}$ is finite for every compact subset $K\subseteq X$,
\item for every $\Delta\in\cC$ the quotient of $\Delta$ by its stabilizer $\GroupD$ in $\Group$ is compact, and
\item\label{item: Jaffee} for every $\Delta\in\cC$ there exists a finite index subgroup $\Group'\subseteq \Group$ such that the proper map
\[
\Group'_\Delta\backslash \Delta \intoo \Group'\backslash X
\]
is an embedding.
\end{enumerate}
It follows that $\GroupD$ is a Poincar\'e duality group of cohomological dimension $m-k$.
Moreover, the reduced homology group $\tilde{\HH}_{k-1}(X\setminus \Delta)$ is free of rank one.
The orientations on $X$ and $\Delta$ induce an isomorphism
\[
\tilde{\HH}_{k-1}(X\setminus \Delta) \xlongrightarrow{\sim} \Z.
\]
Since $\Group$ is of type (FL), taking $\Gamma$-cohomology commutes with direct sums.
In particular, the canonical map
\[
\bigoplus_{\Orb\in\Group\backslash \cC} \HH^i(\Group, \Z[\Orb])\xlongrightarrow{\sim}\HH^i(\Group, \Z[\cC])
\]
is an isomorphism for every $i$.
Moreover, arguing as in Lemma \ref{lem: arithmeticpreparation} yields isomorphisms
\begin{align}\label{eq: dualityShapiro}
\begin{split}
\HH^i(\Group, \Z[\Orb])&\xlongrightarrow{\sim}\HH_{m-i}(\Group,\Z[\Orb])\\
&\xlongrightarrow{\sim}\HH_{m-i}(\GroupD, \Z).
\end{split}
\end{align}
for every $\Group$-orbit $\Orb=\Group \Delta\subseteq \cC$.
It follows that
\[
\HH^i(\Group, \Z[\Orb])\cong\begin{cases}
0 & \mbox{for}\ i < k,\\
\Z & \mbox{for}\ i = k.
\end{cases}
\]
Our first aim is to construct an explicit generator of $\HH^k(\Group, \Z[\Orb])$:
consider the following $\Group$-stable subcomplex $\mathfrak{C}^{\cC}_\bullet(X)$ of the singular chain complex $C_\bullet(X)$:
\[
\mathfrak{C}^{\cC}_q(X)\coloneq
\begin{cases}C_{q}(X) & \mbox{for}\ q>k,\\
C_{q}(X\setminus \bigcup_{\Delta\in \cC}\Delta)& \mbox{for}\ q<k,
\end{cases}
\]
\[
\mathfrak{C}^{\cC}_{k}(X)\coloneq\left\{c\in C_{k}(X)\ \middle\vert\  d_{k}(c)\in \mathfrak{C}^{\cC}_{k-1}(X)\right\}.
\]
In the special case of special cycles on orthogonal symmetric spaces this subcomplex was considered in \cite[Section 2.4.1]{DGL} (see also \cite[Section 4.1]{Gehrmannrational}).
\begin{Lem}
The inclusion $\mathfrak{C}^{\cC}_\bullet(X)\into C_\bullet(X)$ is a quasi-isomorphism.
\end{Lem}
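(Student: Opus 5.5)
The plan is to show that the quotient complex $Q_\bullet\coloneq C_\bullet(X)/\mathfrak{C}^{\cC}_\bullet(X)$ is acyclic. Write $Z\coloneq\bigcup_{\Delta\in\cC}\Delta$ and $Y\coloneq X\setminus Z$, and first identify $Q_\bullet$ concretely.
\begin{itemize}
\item For $q>k$ we have $Q_q=0$.
\item For $q<k$ we have $Q_q=C_q(X)/C_q(Y)=C_q(X,Y)$, the relative chains.
\item In degree $k$ the differential $d_k$ induces a map $Q_k\to C_{k-1}(X,Y)$. It is injective by the definition of $\mathfrak{C}^{\cC}_k(X)$, and its image is the group of relative boundaries $B_{k-1}(X,Y)$.
\end{itemize}
Hence $Q_\bullet$ is isomorphic to the truncated relative complex
\[
0\too B_{k-1}(X,Y)\too C_{k-1}(X,Y)\too\cdots\too C_0(X,Y)\too 0 .
\]
Its homology vanishes in degree $k$ because the first map is injective. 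In degrees $q\le k-1$ its homology is $\HH_q(X,Y)$. So the lemma is equivalent to $\HH_q(X,Y)=0$ for all $q<k$, and we conclude from the long exact sequence of $C_\bullet(\mathfrak{C}^{\cC})\to C_\bullet(X)\to Q_\bullet$.

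To prove $\HH_q(X,Y)=0$ for $q<k$, show that every relative cycle is homologous to zero. Concretely, every singular $q$-chain $c$ in $X$ with $\partial c\in C_{q-1}(Y)$ should be homotopic, through a homotopy that is constant on the part already lying in $Y$, to a chain supported in $Y$.
\begin{enumerate}
\item Replace $c$ by a smooth chain. The smooth singular complex computes the same homology, compatibly with the subcomplex of chains in $Y$.
\item Perturb the simplices inductively over the skeleton: vertices first, then edges, and so on. At each stage, extend the perturbation already chosen on the faces, and keep fixed those faces that already lie in $Y$.
\item The key input is a relative transversality statement. For a single closed submanifold $\Delta$ of codimension $k$ and a smooth map $\sigma\colon\simplex^q\to X$ with $q<k$, transversality to $\Delta$ means disjointness from $\Delta$. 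The maps agreeing with a prescribed map on $\partial\simplex^q$ (already disjoint from $\Delta$) and disjoint from $\Delta$ form an open dense subset of the relevant mapping space, by the parametric Thom transversality theorem.
\item Since $\cC$ is countable, the Baire category theorem in the $C^\infty$ mapping space, which is a Baire space, shows that the set of maps disjoint from \emph{all} $\Delta\in\cC$ simultaneously is still dense. So we may choose an arbitrarily $C^0$-small perturbation of each simplex into $Y$, rel boundary.
\item The straight-line homotopies (taken in a chart, or via a geodesic convex structure) then furnish the required chain homotopy. This shows that the class of $c$ in $\HH_q(X,Y)$ vanishes.
\end{enumerate}

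I expect the main obstacle to be the non-closedness of $Z$. Each $\Group$-orbit is locally finite, but countably many orbits may accumulate, so $Y$ need not be open. The usual argument (excision, or choosing perturbations small enough to stay in an open set) is then unavailable. This is exactly why I would use the Baire argument together with perturbations that are fixed on faces already lying in $Y$, rather than relying on openness of $Y$. One must check that residual sets of maps rel boundary are nonempty, i.e.\ that the relevant mapping space (maps with fixed boundary values) is a Baire space; this holds since it is a closed subspace of a completely metrizable space.

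The hypotheses that each $\Delta$ is contractible, that $X\setminus\Delta$ is homotopic to a sphere, and the cocompactness assumptions are not needed here. Only the codimension $k$ and the countability of $\cC$ enter.
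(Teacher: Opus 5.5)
Your reduction is correct. The quotient $C_\bullet(X)/\mathfrak{C}^{\cC}_\bullet(X)$ is the truncated relative complex, so the lemma is equivalent to $\HH_q(X,Y)=0$ for $q<k$. Pushing simplices of dimension $<k$ off the countably many closed codimension-$k$ submanifolds by transversality and Baire, rel the faces already in $Y$, is the right mechanism. The paper gives no argument of its own; it only says the proof of \cite[Proposition 2.17]{DGL} carries over verbatim.

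The step that does not hold as you justify it is Step 1. That smooth singular chains compute the same homology is standard for $X$ and for \emph{open} subsets, because the smoothing operator moves a simplex only slightly and so keeps it inside an open set. As you note yourself, $Y$ need not be open. Smoothing a simplex of $Y$ may therefore push it into $Z$, and the smoothed chain is then no longer a relative cycle. The claim that $C^\infty_\bullet(X)/C^\infty_\bullet(Y)\to C_\bullet(X)/C_\bullet(Y)$ is a quasi-isomorphism in degrees $<k$ is essentially as hard as the lemma itself. So Step 1 silently uses the openness you declare unavailable. The smooth framework also makes Step 3 awkward: the map on $\simplex^q$ obtained by extending the homotopies already chosen on the faces is only continuous, so it does not lie in the smooth mapping space with those boundary values.

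Both issues disappear if you drop Step 1 and run Steps 2--4 in the $C^0$ category.
\begin{itemize}
\item Take a continuous $g\colon \simplex^q\to X$ with $g(\partial\simplex^q)\subseteq Y$. Let $\mathcal{S}_g$ be the space of continuous maps agreeing with $g$ on $\partial\simplex^q$, with the sup metric from a complete metric on $X$. It is complete, hence a Baire space.
\item For each $\Delta\in\cC$, the set of $h\in\mathcal{S}_g$ with $h(\simplex^q)\cap\Delta=\emptyset$ is open, since $\Delta$ is closed and $\simplex^q$ is compact.
\item This set is also dense. For any $h\in\mathcal{S}_g$, the preimage $h^{-1}(\Delta)$ is a compact subset of the open simplex. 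An arbitrarily small modification supported in the interior makes $h$ smooth near it and then transverse to $\Delta$, which for $q<k$ means disjoint.
\item Baire then gives $h\in\mathcal{S}_g$ with image in $Y$, arbitrarily close to $g$. Your geodesic homotopy then applies. More simply, contractibility of $X$ gives a homotopy from $g$ to $h$ rel $\partial\simplex^q$.
\end{itemize}
With homotopies compatible with face maps and constant on simplices lying in $Y$, the prism operator finishes the argument as you describe.
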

\begin{proof}
The claim is proven in the case case that $X$ is the symmetric space attached to an orthogonal group and $\cC$ is the collection of cycles coming from orthogonal complements of positive definite lines in \cite[Proposition 2.17]{DGL}.
The proof of \textit{loc.cit.~}works verbatim in our situation.
\end{proof}

For $\Delta\in \cC$ and a $c\in\mathfrak{C}^{\cC}_k(X)$ define their signed intersection number via
\[
\Delta \cdot c \coloneq [d_k(c)]\in \tilde{\HH}_{k-1}(X\setminus \Delta)=\Z.
\]
It is easy to check that the signed intersection number is $\Group$-equivariant, that is,
\[
\gamma.\Delta \cdot \gamma.c= \Delta \cdot c\quad \mbox{for all}\ \gamma\in\Group.
\]
Let $\cO\subseteq \cC$ be a finite union of $\Group$-orbits.
Since every $\Group$-orbit in $\cC$ is locally finite, the associated $\Group$-equivariant homomorphism
\[
\mathfrak{c}_\Orb^X\colon \mathfrak{C}^{\cC}_k(X) \too \Z[\Orb],\quad c \mapstoo \sum_{\Delta\in\Orb} (\Delta \cdot c)\ \Delta
\]
is well-defined.
By construction, we have $\mathfrak{c}_\Orb^X(d_{k+1}(c))=0$ for every $c\in \mathfrak{C}^{\cC}_{k+1}(X)$.
Therefore, as explained in \cite[Section 2.1]{DGL}, $\mathfrak{c}_\Orb$ gives rise to a cohomology class $\sD^{\Group}_\Orb \in \HH^k(\Group, \Z[\Orb])$.
For every finite-index subgroup $\Group'\subseteq \Group$ the assumptions \eqref{item: first} - \eqref{item: Jaffee} are still fulfilled and, therefore, one can also construct a class $\sD^{\Group'}_\Orb \in \HH^k(\Group', \Z[\Orb])$.
By construction, the restriction map
\[
\HH^k(\Group,\Z[\Orb])\too \HH^k(\Group',\Z[\Orb])
\]
sends $\sD^{\Group}_\Orb$ to $\sD^{\Group'}_\Orb$.

The main step in proving that $\sD^{\Group}_\Orb$ generates $\HH^k(\Group,\Z[\Orb])$ is to construct a particularly nice triangulation of $X$.
In particular, each simplex of the triangulation should be contained in the subcomplex $\mathfrak{C}^{\cC}_\bullet(X)$.
Crucial for the construction is the Eilenberg--Zilber triangulation of products of simplices:
given a finite ordered set $S$, write $\simplex^S$ for the simplex with set of vertices $S$.
For an integer $0\leq l \leq |S|$, denote by $\simplex^S_{\leq l}$ the face of $\simplex^S$ generated by the first $l$ vertices of $S$. Similarly, $\simplex^S_{\geq l}$ denotes the faces generated by the last $|S|-l+1$ simplices of $S$.
Write $\simplex^p$ for the standard $p$-simplex with vertices $0,\ldots,p$.
The vertices of the Eilenberg--Zilber triangulation of the product $\simplex^p \times \simplex^q$ are given by tuples $(i,j)$, $0\leq i \leq p$, $0\leq j \leq q$.
These pairs are partially ordered by the rule $(i,j)\prec (i',j')$ if and only if $i \leq i'$ and $j \leq j'$.
An $r$-simplex of $\simplex^p \times \simplex^q$ is a tuple $(v_0,\ldots,v_r)$ of $r+1$ vertices $v_0,\ldots,v_r$ such that $v_0 \leq v_1\leq\cdots \leq v_r$.
One easily verifies that the projection maps from $\pi_1\colon\Delta^p\times\Delta^q\to\Delta^p$ and $\pi_2\colon\Delta^p\times\Delta^q\to\Delta^q$ are simplicial.
\begin{Lem}\label{lem: simplex}
Let $b\in \simplex^q$ be an interior point.
\begin{enumerate}[(a)]
\item\label{lem: simplexa} No $(q-1)$-simplex of $\simplex^p\times\simplex^q$ intersects the subset $\simplex^p \times\{b\}$.
\item\label{lem: simplexb} There exists a unique $(p+q)$-simplex $\simplex$ of $\simplex^p\times\simplex^q$ such that $\simplex_{\leq q}$ intersects $\simplex^p \times\{b\}$.
The projection map $\pi_1\colon\simplex^p\times \simplex^q\to \simplex^p$ induces an isomorphism between $\simplex_{\geq q}$ and $\simplex^p$ that is order-preserving on vertices.
\end{enumerate}
\end{Lem}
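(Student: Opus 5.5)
The plan is to work entirely with the combinatorics of the Eilenberg--Zilber triangulation. A simplex of $\simplex^p\times\simplex^q$ is a chain $v_0\prec\cdots\prec v_r$ of vertices $(i,j)$, and its geometric realization is the convex hull of these vertices in $\R^{p}\times\R^{q}$. Since $\pi_2$ is simplicial, the image of such a simplex under $\pi_2$ is the face of $\simplex^q$ spanned by the second coordinates $\{j_0,\ldots,j_r\}$. Because $b$ is interior, it lies in a face of $\simplex^q$ only if that face is all of $\simplex^q$. Hence a simplex meets $\simplex^p\times\{b\}$ if and only if its set of second coordinates is all of $\{0,\ldots,q\}$.

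For (a), a $(q-1)$-simplex has only $q$ vertices, so it can carry at most $q$ distinct second coordinates and misses at least one element of $\{0,\ldots,q\}$. By the criterion above it therefore cannot meet $\simplex^p\times\{b\}$.

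For (b), the reading of $\simplex_{\leq q}$ matters. I take it to be the face spanned by the first $q+1$ vertices, so that it is $q$-dimensional; this is the only reading consistent with (a), with the indexing $\simplex_{\geq q}$, and with the claimed isomorphism onto $\simplex^p$.
\begin{enumerate}[(1)]
\item A top-dimensional simplex is a maximal chain, that is, a monotone lattice path from $(0,0)$ to $(p,q)$ in which each step increases exactly one coordinate by one.
\item Its first $q+1$ vertices must have pairwise distinct second coordinates covering $\{0,\ldots,q\}$. Since the path starts at $(0,0)$ and takes $q+1$ vertices, this happens exactly when the first $q$ steps all go up in the second coordinate. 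So the initial segment is $(0,0),(0,1),\ldots,(0,q)$.
\item The remaining path is then forced to be $(1,q),\ldots,(p,q)$. This gives existence and uniqueness of $\simplex$.
\item The face $\simplex_{\geq q}$ has vertices $(0,q),(1,q),\ldots,(p,q)$, and $\pi_1$ sends these to $0,1,\ldots,p$. So $\pi_1$ restricts to an order-preserving bijection on vertices and hence to a simplicial isomorphism $\simplex_{\geq q}\to\simplex^p$.
\end{enumerate}

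The step needing the most care is the justification of the criterion that a simplex meets $\simplex^p\times\{b\}$ exactly when its second coordinates exhaust $\{0,\ldots,q\}$. Concretely, I must check that $\pi_2$ maps the realized simplex onto the face spanned by its second coordinates. This holds because $\pi_2$ is affine on each simplex of the triangulation, and because interior points of $\simplex^q$ have all barycentric coordinates positive. Everything else in the argument is counting.
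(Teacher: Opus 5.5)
Your proof is correct and follows the paper's argument essentially verbatim. The paper also uses that $b\in\pi_2(\simplex)$ forces $\pi_2(\simplex)=\simplex^q$, hence $\dim\simplex\ge q$, which gives (a). It then treats a top-dimensional simplex as a monotone path whose first $q+1$ vertices must exhaust the second coordinates, forcing $v_l=(0,l)$ for $l\le q$ and $v_l=(l-q,q)$ for $l\ge q$.

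Your reading of $\simplex_{\leq q}$ as the $q$-dimensional face on the first $q+1$ vertices is exactly the one the paper's proof uses; the paper's stated definition of $\simplex^S_{\leq l}$ is off by one. Your two-sided criterion also makes explicit the existence half of (b), which the paper leaves implicit.
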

\begin{proof}
Suppose that a simplex $\simplex$ of $\simplex^p\times\simplex^q$ intersects $\sigma^p\times\{b\}$.
Then $b\in \pi_2(\simplex)$.
Since $b$ is an interior point, this implies that $\pi_2(\simplex)=\Delta^q$ and, thus, that $\dim(\simplex)\geq q$.
A top-dimensional simplex $\simplex$ corresponds to a monotone path $v_0=(i_0,j_0),\ldots,v_{p+q}=(i_{p+q},j_{p+1})$ from $(0,0)$ to $(p,q)$.
Suppose that the $q$-face $\simplex|_{\leq q}$ intersects $\simplex^p \times\{b\}$.
Arguing as above, we see that $\pi_2$ induces an isomorphism between $\simplex|_{\leq q}$ and $\simplex^q$.
In other words, $\{j_i \mid 0\leq l \leq q\}=\{0,\ldots,q\}$.
Since the path $v_0,\ldots,v_{p+q}$ is monotone, we see that $v_l=(0,l)$ for $l\leq q$ and $v_l=(l-q,q)$ for $l\geq q$.
Clearly, the projection $\pi_1$ induces an order-preserving isomorphism between $\sigma_{\geq q}$ and $\sigma^p$.
\end{proof}

\begin{Lem}\label{lem: intersections}
Let $\Orb\subseteq \cC$ be a $\Group$-orbit.
The cohomology class $\sD^{\Group}_\Orb$ generates the free $\Z$-module $\HH^k(\Group, \Z[\Orb])$.
\end{Lem}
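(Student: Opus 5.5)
We aim to show that under the isomorphism \eqref{eq: dualityShapiro} for $i=k$, the class $\sD^{\Group}_\Orb$ is sent to a generator of $\HH_{m-k}(\GroupD,\Z)\cong\Z$, namely (up to sign) the fundamental class of the closed oriented manifold $\GroupD\backslash\Delta$. We first reduce to a convenient finite-index subgroup. Pick $\Group'\subseteq\Group$ as in assumption \eqref{item: Jaffee}, and shrink it further so that it is normal and $\Group'_\Delta\backslash\Delta\into\Group'\backslash X$ is still an embedding. The restriction sends $\sD^{\Group}_\Orb$ to $\sD^{\Group'}_\Orb$. Under duality and Shapiro, restriction corresponds to a transfer-type map on $\HH_{m-k}(\GroupD,\Z)$ that sends the fundamental class of $\GroupD\backslash\Delta$ to a sum of fundamental classes of the components $\Group'_{\Delta'}\backslash\Delta'$, one for each $\Group'$-orbit in $\Orb$. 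Each of these is primitive in its own summand $\Z$. So $\sD^{\Group}_\Orb$ generates $\HH^k(\Group,\Z[\Orb])$ as soon as, for every $\Group'$-orbit $\Orb'\subseteq\Orb$, the component of $\sD^{\Group'}_\Orb$ in $\HH^k(\Group',\Z[\Orb'])$ is a generator. Hence we may assume $\Group=\Group'$, so that $\Group_\Delta\backslash\Delta$ is an embedded closed submanifold of the closed $m$-manifold $M\coloneq\Group\backslash X$.

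The key step is to identify the duality isomorphism concretely. Cap product with the fundamental class $[M]$ realizes $\HH^k(\Group,\Z[\Orb])\to\HH_{m-k}(\Group,\Z[\Orb])$. On chains this can be computed using a $\Group$-equivariant triangulation of $X$ (pulled back from a triangulation of $M$), together with the Alexander--Whitney diagonal. Cap product with the class represented by $\mathfrak{c}^X_\Orb$ then sends the fundamental cycle $\sum\sigma$ to $\sum_\sigma \mathfrak{c}^X_\Orb(\sigma_{\le k})\otimes\sigma_{\ge k}$. The coefficient of $\Delta$ in this sum collects exactly the back faces $\sigma_{\geq k}$ of those top simplices whose front $k$-face meets $\Delta$ transversally, weighted by the intersection sign. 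For this to make sense, all $\sigma_{\le k}$ must lie in $\mathfrak{C}^\cC_k(X)$ and all faces of dimension $<k$ must avoid $\bigcup\cC$.

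To get such a triangulation, I would choose it so that near $\Delta$ it looks like a product. Take a $\Group_\Delta$-invariant tubular neighbourhood $N\cong\Delta\times D^k$ of $\Delta$ (this is where the embedding assumption is used). Triangulate $\Delta$ $\Group_\Delta$-equivariantly, triangulate $D^k$ as a single $k$-simplex with $\Delta$ passing through an interior point $b$, and use the Eilenberg--Zilber triangulation on $N$. Then extend to all of $X$ equivariantly, in general position with respect to the locally finite family $\cC$. Lemma \ref{lem: simplex}\eqref{lem: simplexa} ensures that no simplex of dimension $<k$ meets $\Delta$. Lemma \ref{lem: simplex}\eqref{lem: simplexb} ensures that each simplex $\tau$ of $\Delta$ occurs exactly once as $\sigma_{\geq k}$ of a top simplex $\sigma$ whose $\sigma_{\le k}$ meets $\Delta$, with intersection number $\pm1$ (a consistent sign by orientation). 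Hence the image in $\HH_{m-k}(\GroupD,\Z)$ is $\pm$ the fundamental cycle of $\GroupD\backslash\Delta$, which is a generator.

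I expect the main obstacle to be constructing this triangulation compatibly for all translates $\gamma\Delta$ and all other members of $\cC$ at once. Other submanifolds may intersect $\Delta$, so lower-dimensional faces must avoid all of them. I would handle this by first arranging the product structure only near $\Delta$ and using local finiteness to perturb the remaining simplices into general position. Only $\Orb$-coefficients matter, and intersections with other orbits of $\cC$ are irrelevant to $\mathfrak{c}_\Orb^X$, so it suffices to treat $\cC=\Orb$.
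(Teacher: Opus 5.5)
Your outline is exactly the route of the paper: reduce to a finite-index $\Group'$ using assumption~\eqref{item: Jaffee}, compute the duality isomorphism as a chain-level cap product with a fundamental cycle from a $\Group$-invariant triangulation, and read off a fundamental cycle of $\GroupD\backslash\Delta$ from Lemma~\ref{lem: simplex}. Your transfer argument is a correct justification of the paper's ``one readily checks'' in the reduction step.

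\textbf{The gap.} The step that fails as stated is the equivariant product triangulation of the tubular neighbourhood when $k\geq 2$. Suppose the Eilenberg--Zilber triangulation of $K(\Delta)\times\simplex^k$, built with one fixed ordered $k$-simplex in $D^k$, is $\GroupD$-invariant with vertex orderings respected. Then each $\gamma\in\GroupD$ must send the fibre simplex $(w,0),\dots,(w,k)$ to $(\gamma w,0),\dots,(\gamma w,k)$ in this order. Hence the quotient tube is simplicially $K(N)\times\simplex^k$ with $N\coloneq\GroupD\backslash\Delta$, and $w\mapsto(w,0)$ is a nowhere vanishing section of the normal bundle of $N$ in $\Group\backslash X$.

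That bundle need not even have vanishing Euler class, and passing to finite index only multiplies its Euler number by the degree. In signature $(2,2)$ one has $X_\infty\cong\mathbb{H}\times\mathbb{H}$, and $\Delta_{\ell,\infty}$ is the graph $\{(Az,z)\}$ of a holomorphic automorphism $A$. Its normal bundle is equivariantly its tangent bundle, so for the stabilizer $\Gamma_\ell$ the normal bundle of the closed hyperbolic surface $\Gamma_\ell\backslash\Delta_{\ell,\infty}$ has Euler number $\pm(2-2g)\neq 0$.

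The paper's gluing of ``locally defined Eilenberg--Zilber product triangulations'' over a tubular neighbourhood of $N$ in $M$ hides exactly the same obstruction: the transition maps would have to preserve one ordered simplex in each fibre. Only for $k=1$ (where the normal line bundle is orientable, hence trivial), or when the normal bundle happens to be trivial, does the argument go through verbatim.

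\textbf{Repair.} The lemma itself is fine; you only need the product structure locally. Take a $\Group$-invariant ordered triangulation in general position with respect to the members of $\Orb$ (pairwise disjoint after your reduction), so that every simplex of dimension $<k$ misses them. Then your cycle $z=\sum_\sigma(\Delta\cdot\sigma_{\leq k})\,\sigma_{\geq k}$ is defined. Pushed to $N$ by the retraction of a tubular neighbourhood, it represents $d\,[N]$ for some $d\in\Z$.

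Since $\HH_{m-k}(N)\to\HH_{m-k}(N,N\setminus\{y\})\cong\Z$ is an isomorphism, $d$ is a local degree at a single point $y\in N$. It only sees the simplices lying over a small ball around $y$. Over that ball the normal bundle is trivial, so you may arrange the triangulation to be an Eilenberg--Zilber product there. Lemma~\ref{lem: simplex}\eqref{lem: simplexb} then gives exactly one contributing back face over $y$, whence $d=\pm1$.

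Alternatively, you can avoid triangulation combinatorics altogether. A cocompact free $\Group$-CW structure on $X$ identifies $\HH^k(\Group,\Z[\Orb])$ with the compactly supported cohomology of the manifold $\GroupD\backslash X$. Under this identification $\sD^{\Group}_\Orb$ becomes the compactly supported Thom class of $N\subset\GroupD\backslash X$, since it pairs to $\pm1$ with a normal $k$-disc. Poincar\'e duality on $\GroupD\backslash X$ sends this Thom class to $[N]$. This version does not even need assumption~\eqref{item: Jaffee} or the reduction to $\Group'$.
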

\begin{proof}
Fix a base point $\Delta\in \cC$ and a subgroup $\Group'$ as in assumption \eqref{item: Jaffee}.
The set $\Orb$ decomposes into finitely many $\Group'$-orbits $\Orb_1,\ldots,\Orb_h$ with $\Orb_1=\Group'.\Delta$.
The cohomology group and the class $\sD^{\Group'}_\Orb$ decompose accordingly:
\[
\HH^k(\Group', \Z[\Orb])=\bigoplus_{i=1}^{h}\HH^k(\Group', \Z[\Orb_i])\quad\mbox{and}\quad \sD^{\Group'}_\Orb=\sum_{i=1}^{h} \sD^{\Group'}_{\Orb_i}.
\]
One readily checks that it is enough to prove the statement for $\Group'$ and the orbit $\Orb_{1}$.
Thus, we may assume that $\Group=\Group'$.

Let us make the isomorphisms of \eqref{eq: dualityShapiro} more explicit:
write $M\coloneq \Group\backslash X$ and consider $N\coloneq \Group_\Delta\backslash \Delta$ as a submanifold of $M$.
Let $D_N$ be a closed tubular neighbourhood of $N$.
Locally on $N$ the tubular neighbourhood can be trivialized as $U\times D^k$, where $D^k\subseteq \R^k$ denotes the unit disc.
Choosing triangulizations of $D^k$ by a single $k$-simplex a triangulation $K(N)$ of $N$ that is fine enough, we get a triangulation of $D_N$ by glueing together the locally defined Eilenberg--Zilber product triangulations.
By construction, this is a triangulation relative to the boundary $\partial D_N$ of $N$.
Thus, we may extend it to a triangulation $K(M)$ of $M$.
Pulling back this triangulation via the covering map $X\onto M$ yields a $\Group$-stable triangulation $K(X)$ of $X$.
Similarly, we define $K(\Delta)$ as the pullback of $K(N)$ via the covering map $\Delta \to N$.
By Lemma \ref{lem: simplex}, \eqref{lem: simplexa} the $(q-1)$-simplices of $K(X)$ are disjoint from all $\Delta'\in \Orb$.
Let $\simplex_1,\ldots,\simplex_l$ be representatives for the $\Group$-orbits in the set of $m$-simplices of $\mathcal{K}$.
Note that there are finitely many orbits since the quotient $\Group\backslash X$ is compact.
We may assume that $\simplex_i \cap \Delta'=\emptyset$ for all $\Delta'\in \Orb$, $\Delta'\neq \Delta$.
For every $1\leq i \leq l$, choose orientation-preserving isomorphisms $c_i$ of the standard $d$-simplex with $\sigma_i$ and consider the chain $\sum_{i=1}^{l} c_i\in \mathfrak{C}^{\cC}_m(X)$.
Its image in $\HH_m(\Group\backslash X, \Z)\cong \HH_m(\Group, \Z)$ is a fundamental class.
Therefore, the image of $\sD^{\Group}_\Orb$ under the first isomorphism of \eqref{eq: dualityShapiro} is represented by the chain
\[
\sum_{i=1}^{l}\sum_{\Delta'\in \Orb} (\Delta' \cdot c_i|_{\leq k})\  c_i|_{\geq k}\otimes\Delta'  \in \mathfrak{C}^{\cC}_k(X) \otimes_\Z \Z[\Orb].
\]
The image under the second isomorphism of \eqref{eq: duality} is therefore represented by the chain
\begin{equation}\label{eq: fundclass}
\begin{split}
\sum_{i=1}^{l}\sum_{\Delta'\in \Orb} (\Delta' \cdot c_i|_{\leq k})\  c_i|_{\geq k} 
=\sum_{i=1}^{l}(\Delta \cdot c_i|_{\leq k})\  c_i|_{\geq k} \in \mathfrak{C}^{\cC}_{m-k}(X).
\end{split}
\end{equation}
All $c_i$ with $(\Delta \cdot c_i|_{\leq k})\neq 0$ are contained in a tubular neighbourhood $D_\Delta$ of $\Delta$.
Let $f\colon D_\Delta \to \Delta$ be the contraction map.
Lemma \ref{lem: simplex} \eqref{lem: simplexb} implies that $\{f_\ast(c_i|_{\geq k})\mid \Delta \cdot c_i|_{\leq k} \neq 0\}$ is a set of representatives for the $\Group_D$-orbits in the set of $(m-k)$-simplices of $K(\Delta)$.
Therefore, the image of \eqref{eq: fundclass} $\HH_{m-k}(\GroupD,\Z)$ is a fundamental class, which proves the assertion.
\end{proof}

\subsubsection{An explicit basis.}
Now let us go back to our specific situation and consider the space $X_\infty$ of maximal negative-definite subspaces of $V_\infty\coloneq V\otimes\R$.
It is a smooth contractible manifold of dimension $rs$, on which $\SO(V_\infty)$ acts orientation-preservingly.
Fix an orientation on $X_\infty$.
For every $\ell\in \cP_V$,
\[
\Delta_{\ell,\infty}=\{ Z\in X_\infty\mid Z\ \mbox{is orthogonal to}\ \ell\}
\]
defines a cycle on $X_\infty$ of codimension $s$.
By definition, we have
\begin{equation}\label{eq: cycles}
\gamma.\Delta_{\ell,\infty}=\Delta_{\gamma\ell,\infty}
\end{equation}
for all $\gamma\in\SO(V)$.
An orientation on $\ell$ induces an orientation on $\Delta_{\ell,\infty}$ (see \cite[p.~131]{KM}).
Assume that $\Gamma$ is a neat subgroup of $\SO(V)^+$.
Then, the stabilizer of $\ell \in \cP_V$ in $\Gamma$ acts trivially on the line $\ell$.
Hence, we may choose orientations on the cycles $\Delta_{\ell,\infty}$ such that \eqref{eq: cycles} is an equality of oriented cycles for all $\gamma\in \Gamma$.
Fix such a choice of orientations.
For any arithmetic subgroup $\Gammao\subseteq \Gamma$ the tuple $(X,\cC)=(X_\infty,\cP_V)$ satisfies the assumptions \eqref{item: first} - \eqref{item: Jaffee} except the compactness of the quotient spaces.
That assumption \eqref{item: Jaffee} is fulfilled follows from the Jaffee lemma (see \cite[Lemma 2.1]{KM}).
The quotient spaces are compact if and only if $V$ is anisotropic.
Let $\cO\subseteq \cP_V$ be a finite union of $\Gamma$ .
By arguing as in Section 2.4 of \cite{DGL}, one shows that the assignment
\[
\mathfrak{c}_\Orb\colon \mathfrak{C}^{\cP_V}_s(X_\infty) \too \Div^\dagger_{\ratq,\Orb}(X_p),\quad c \mapstoo \sum_{\ell\in\Orb} (\Delta_{\ell,\infty} \cdot c)\ \Delta_{\ell,p},
\]
is well-defined, $\Gamma$-equivariant and fulfils $\mathfrak{c}_\Orb(d_{s+1}(c))=0$ for every $c\in \mathfrak{C}^{\cP_V}_{s+1}(X_\infty)$.
Note that Lemma 2.10 of \textit{loc.cit.~}holds in our case since $\Div^\dagger_{\ratq,\Orb}(X_p)$ is isomorphic to the function space $\cF_p^\dagger(\Orb,\Z)$.
The cocycle $\mathfrak{c}_\Orb$ gives rise to a cohomology class $\sD_\Orb^{\Gamma}\in\HH^s(\Gamma, \Div^\dagger_{\ratq,\Orb}(X_p))$.

\begin{Pro}
Assume that $V$ is anisotropic and split at $p$ and that $\Gamma$ is a neat subgroup of $\SO(V)^+$.
Then $\sD_\Orb^{\Gamma}$ is a generator of the free abelian group $\HH^s(\Gamma, \Div^\dagger_{\ratq,\Orb}(X_p))$ for every $\Gamma$-orbit $\cO\subseteq \cP_V$.
\end{Pro}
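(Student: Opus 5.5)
Since $V$ is anisotropic, $\cP_V^c=\cP_V$, so Theorem \ref{thm: divisors} gives $\HH^s(\Gamma,\Div^\dagger_{\ratq,\Orb}(X_p))\cong\Z$. It remains to identify $\sD^\Gamma_\Orb$ with a generator. To do this I compare with the arithmetic fundamental classes of Lemma \ref{lem: intersections}, using the resolution preceding Theorem \ref{thm: divisors}. Split $\Div^\dagger_{\ratq,\Orb}(X_p)=\Div^\pm$ and use the two short exact sequences. From the proof of Theorem \ref{thm: divisors} and Lemma \ref{lem: arithmeticpreparation}, the vanishing of $\HH^{s-1}_c$ of the right-hand terms shows that the map
\[
\HH^s(\Gamma,\Div^\dagger_{\ratq,\Orb}(X_p)^+)\too \bigoplus_{i=0}^1\HH^s(\Gamma_{i,i},\Z[\Orb\cap K_i])
\]
is injective. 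Moreover, its composite with projection to either summand is an isomorphism onto the corresponding kernel, and the same holds for the minus part.

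It therefore suffices to show the following. Restricting $\sD^\Gamma_\Orb$ along $\Gamma_{0,0}\subseteq\Gamma$ and projecting $\Div^\dagger_{\ratq,\Orb}(X_p)\to\Z[\Orb\cap K_0]$ (evaluation of the coefficient function on $K_0$, via $\cF^\dagger_p$) gives a generator of $\HH^s(\Gamma_{0,0},\Z[\Orb\cap K_0])$ on each orbit. The same should hold for $\Gamma_{0,1}$ on the odd part.

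To prove this, I check at the cocycle level that the composite $\mathfrak{C}^{\cP_V}_s(X_\infty)\to\Div^\dagger_{\ratq,\Orb}(X_p)\to\Z[\Orb\cap K_0]$ is exactly the intersection cocycle $\mathfrak{c}^{X_\infty}_{\Orb\cap K_0}$ for the arithmetic group $\Gamma_{0,0}$. This holds because restriction to a compact open subset only retains the lines in $\Orb\cap K_0$. The set $\Orb\cap K_0$ is a finite union of $\Gamma_{0,0}$-orbits, and in fact a single one by the strong approximation statement in the proof of Theorem \ref{thm: divisors}. Hence its class is $\sD^{\Gamma_{0,0}}_{\Orb\cap K_0}$.

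Now apply Lemma \ref{lem: intersections} with $\Group=\Gamma_{0,0}$, $X=X_\infty$, $m=rs$, $k=s$, and $\cC$ the collection $\{\Delta_{\ell,\infty}\}$. The hypotheses hold:
\begin{enumerate}[(i)]
\item proper discontinuity and orientation preservation follow from neatness and $\Gamma\subseteq\SO(V)^+$;
\item compactness of $\Group\backslash X_\infty$ and of $\Group_\Delta\backslash\Delta$ follows from $V$ being anisotropic (so $\ell^\perp$ is anisotropic too);
\item each $\Delta_{\ell,\infty}$ is contractible of codimension $s$, with complement homotopic to $S^{s-1}$;
\item local finiteness follows from the finiteness argument of Lemma \ref{lem: definiteorbits};
\item the embedding condition is the Jaffee lemma, as noted.
\end{enumerate}
The lemma then says the class generates $\HH^s(\Gamma_{0,0},\Z[\Orb\cap K_0])\cong\Z$, and hence $\sD^\Gamma_\Orb$ generates the plus or minus component. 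One caveat: $\Orb$ has coefficients only in the $+$ part or only in the $-$ part, according to the parity of its lines. That part then carries all of $\HH^s\cong\Z$, and the other part vanishes by the same computation.

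The main obstacle is the compatibility between the abstract isomorphism of Theorem \ref{thm: divisors} (Shapiro's lemma for $\Coind_{\Gamma_{0,0}}^\Gamma$) and the explicit restriction-and-evaluation at cocycle level. Shapiro's isomorphism is indeed restriction to $\Gamma_{0,0}$ followed by evaluation at the identity coset, so this should be a careful but routine check. Some attention to signs is also needed, since the orientations of the cycles are chosen $\Gamma$-equivariantly and hence also $\Gamma_{0,0}$-equivariantly.
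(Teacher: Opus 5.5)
Your proposal is correct and follows essentially the same route as the paper. Via the proof of Theorem~\ref{thm: divisors} (Shapiro's lemma, plus the fact that the kernel projects isomorphically onto a single summand), you reduce to showing that restricting $\sD^\Gamma_\Orb$ to the stabilizer of a vertex pair and evaluating on the corresponding compact open piece of $\cN_{V_p}$ gives the intersection cocycle of that arithmetic group, and you then conclude by Lemma~\ref{lem: intersections}. The paper does the same thing uniformly with one compact open set $Kg$ and the subgroup $\Gammao$ mapping into $K\times K^g$, which covers your even case ($\Gamma_{0,0}$) and odd case ($\Gamma_{0,1}$) at once.
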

\begin{proof}
Fix an isomorphism $V_p\cong \Mat_2(\Q_p)$.
Consider an open compact subset of $\cN_{V_p}$ of the form $K g$, where $K\subseteq \PGL_2(K_p)$ is the stabilizer of a vertex of the Bruhat--Tits tree of $\PGL_(\Q_p)$ and $g\in \PGL_2(\Q_p)$ is an arbitrary element.
Write $\Gammao\subseteq \Gamma$ for the subgroup of elements, whose image under the homomorphism $\Gammao \to \PGL_2(\Q_p)^2$ lie in $K\times K^g$.
By the proof of Theorem \ref{thm: divisors}, it is enough to show that the cohomology class associated with the assignment
\[
\mathfrak{C}^{\cC}_s(X_\infty) \too \Z[\Orb\cap K],\quad c \mapstoo \sum_{\ell\in\Orb \cap K g} (\Delta_{\ell,\infty} \cdot c)\ \ell,
\]
is a generator of $\HH^s(\Gammao,[\Orb\cap Kg])$.
But this is a direct consequence of Lemma \ref{lem: intersections}.
\end{proof}

\bibliographystyle{abbrv}
\bibliography{bibfile}

\end{document}